\newtheorem{theorem}{Theorem}
\newtheorem{lemma}[theorem]{Lemma}
\newtheorem{corollary}[theorem]{Corollary}
\newtheorem{proposition}[theorem]{Proposition}
\theoremstyle{definition}
\newtheorem{remark}[theorem]{Remark}
\newtheorem{definition}[theorem]{Definition}
\numberwithin{equation}{section} \numberwithin{theorem}{section}
\newcounter{stepctr}
{\end{list}}
\def\XXint#1#2#3{{\setbox0=\hbox{$#1{#2#3}{\int}$}
     \vcenter{\hbox{$#2#3$}}\kern-.5\wd0}}
\newcommand{\mc}[1]{\mathcal{#1}}
\newcommand{\mbb}[1]{\mathbb{#1}}
\DeclareMathOperator{\tr}{tr} 
\DeclareMathOperator{\injec}{inj}
\newcommand{\M}{\mathcal{M}}
\newcommand{\N}{\mathcal{N}}
\newcommand{\R}{\mbb{R}}
\newcommand{\ra}{\right\rangle}
\newcommand{\la}{\left\langle}
\newcommand{\e}{\varepsilon}
\newcommand{\pd}{\partial}
\newcommand{\cd}{\nabla}
\DeclareMathOperator{\Vol}{Vol}
\DeclareMathOperator{\G}{G}
\DeclareMathOperator{\Id}{Id}
\DeclareMathOperator{\Rm}{Rm}
\DeclareMathOperator{\Rc}{Rc}
\DeclareMathOperator{\Sc}{Sc}
\protected\def\vts{%
  \ifmmode
    \mskip0.5\thinmuskip
  \else
    \ifhmode
      \kern0.08334em
    \fi
  \fi
}
\newcommand{\lb}{\left(}
\newcommand{\rb}{\right)}
\newcommand{\lsb}{\left[}
\newcommand{\rsb}{\right]}
\def\labelitemi{--}
\def\ba #1\ea {\begin{align} #1\end{align}}
\def\bann #1\eann {\begin{align*} #1\end{align*}}
\def\ben #1\een {\begin{enumerate} #1\end{enumerate}}
\def\bi #1\ei {\begin{itemize}\renewcommand\labelitemi{--} #1\end{itemize}}
\newcommand{\inner}[2]{\left\langle#1,#2\right\rangle} %inner product 
\address{Department of Mathematics, University of Tennessee Knoxville, Knoxville TN, USA, 37996-1320}
\address{School of Mathematical and Physical Sciences, The University of Newcastle, Newcastle, NSW, Australia, 2308}
\email{mlangford@utk.edu, mathew.langford@newcastle.edu.au}
\address{School of Mathematical Sciences, Queen Mary University of London, Mile End Road, London, UK, E1 4NS}
\email{h.nguyen@qmul.ac.uk}
\begin{document}
\title[Quadratically pinched hypersurfaces of the sphere]{Quadratically pinched hypersurfaces of the sphere via mean curvature flow with surgery}
\author{Mat Langford}
\author{Huy The Nguyen}
\address{}
\email{}
\subjclass[2000]{Primary 53C44}%; Secondary 35K55, 58J35, 53C21}
\begin{abstract} 
We study mean curvature flow in $\mathbb S_K^{n+1}$, the round sphere of sectional curvature $K>0$, under the quadratic curvature pinching condition $|A|^{2} < \frac{1}{n-2} H^{2} + 4 K$ when $n\ge 4$ and $|A|^{2} < \frac{3}{5}H^{2}+\frac{8}{3}K$ when $n=3$. This condition is related to a famous theorem of Simons \cite{Si68}, which states that the only \emph{minimal} hypersurfaces satisfying $\vert A\vert^2<nK$ are the totally geodesic hyperspheres. It is related to but distinct from the ``two-convexity'' condition studied in \cite{BrHu17,HuSi09}. Notably, in contrast to two-convexity, it allows the mean curvature to change sign. We show that the pinching condition is preserved by mean curvature flow, and obtain a ``cylindrical'' estimate and corresponding pointwise derivative estimates for the curvature. %These estimates do not depend on the maximal time of existence of the flow.% (unlike analogous estimates obtained when the ambient space is Euclidean \cite{HuSi09}. See also \cite{BrHu17}).
As a result, we find that the flow becomes either uniformly convex or quantitatively cylindrical in regions of high curvature. This allows us to apply the surgery apparatus developed by Huisken and Sinestrari \cite{HuSi09} (cf. \cite{HK2}). We conclude that any smoothly, properly, isometrically immersed hypersurface $\M$ of $\mathbb S_K^{n+1}$ satisfying the pinching condition is diffeomorphic to $\mathbb S^n$ or the connected sum of a finite number of copies of $\mathbb S^1\times \mathbb S^{n-1}$. If $\M$ is embedded, then it bounds a 1-handlebody. The results are sharp when $n\ge 4$.
\end{abstract}
\maketitle

\tableofcontents

\section{Introduction}

Beginning in the late 1960's, Simons \cite{Si68} and others \cite{AlencarDoCarmo,ChengNakagawa,ChernDoCarmoKobayashi,Okumura} obtained rigidity theorems for minimal and constant mean curvature hypersurfaces in the sphere under certain bounds on the second fundamental form (depending on the dimension and the value of the mean curvature). The results are obtained by exploiting \emph{Simons' identity}, a Bochner-like formula which relates the Hessian of the mean curvature (which vanishes for a constant mean curvature hypersurface) to the Laplacian of the second fundamental form. Simons' theorem, for example, states that the only minimal hypersurfaces of $\mathbb S^{n+1}$ satisfying $\vert A\vert^2\le n$, where $A$ denotes the second fundamental form, are the totally geodesic hyperspheres (which satisfy $\vert A\vert^2\equiv 0$) and the Clifford hypersurfaces (which satisfy $\vert A\vert^2=n$).

Such results can be improved upon using curvature flows, which remove the constant mean curvature restriction. Indeed, Huisken \cite{Hu87} showed that, under mean curvature flow, hypersurfaces of the sphere $\mathbb{S}_K^{n+1}$ of sectional curvature\footnote{We find it convenient to work without normalizing the curvature $K$, as it serves as a natural scale parameter.} $K$, $n\ge 2$, satisfying the quadratic curvature pinching condition
\begin{equation}\label{eq:Huisken pinching}
\begin{cases}
\vert A\vert^2<\frac{1}{n-1}H^2+2K&\text{if}\;\; n\geq 3 \vts , \smallskip \\
\vert A\vert^2<\frac{3}{4}H^2+\frac{4}{3}K&\text{if}\;\; n=2
\end{cases}
\end{equation}
shrink, preserving the inequality, either to a ``round'' point in finite time or to a totally geodesic hypersphere in infinite time. In case $n\geq 3$, this behaviour is sharp in the sense that there exist hypersurfaces of the form $\mathbb S^1(r)\times \mathbb S^{n-1}(s)$, $r^2+s^2=1$, on which $\vert A\vert^2-\frac{1}{n-1}H^2$ can be made arbitrarily close to $2$. Andrews \cite{An02} obtained a sharper result when $n=2$: he showed that, under a different (fully nonlinear) curvature flow, positive sectional curvature (which is equivalent to the inequality $\vert A\vert^2<H^2+2$) is preserved, and solutions converge either to round points in finite time, or totally geodesic spheres in infinite time.

We will develop these results further by allowing a weaker curvature pinching condition. Namely, we study, for $n\ge 3$, hypersurfaces of $\mathbb S_K^{n+1}$ satisfying
\begin{equation}\label{eq:strict quadratic pinching}
\begin{cases}
\vert A\vert^2<\frac{1}{n-2}H^2+4K &\text{if}\;\; n\geq 4 \vts , \smallskip \\
\vert A\vert^2<\frac{3}{5}H^2+\frac{8}{3}K &\text{if}\;\; n=3\,.
\end{cases}
\end{equation}
The analysis is much more complicated under the weaker condition \eqref{eq:strict quadratic pinching}, since we can no longer expect solutions to shrink to a round point at a finite time singularity --- further singularities and topologies are possible. The purpose of the pinching condition \eqref{eq:strict quadratic pinching} is to ensure that the only additional singularities are (possibly degenerate) ``neck-pinch'' singularities. In this respect, our results are sharp (when $n\ge 4$), since there exist hypersurfaces of $\mathbb S^{n+1}$ the form $\mathbb S^2(r)\times \mathbb S^{n-2}(s)$, $r^2+s^2=1$, on which $\vert A\vert^2-\frac{1}{n-2}H^2$ can be made arbitrarily close to $4$. Once this is established, we are able to make use of the robust surgery construction of Huisken and Sinestrari \cite{HuSi09}, which allows us to replace the singular neck regions by almost spherical caps, and thereby continue the flow (cf. \cite{BrHu17}). Since the estimates hold in the presence of surgeries, with constants that do not depend on the maximal time of existence, we find, after a finite number of surgeries, that the initial hypersurface has decomposed into a finite number of components, each of which is either a ``small'' $\mathbb S^n$, the Cartesian product of $\mathbb S^1$ with a ``small'' $\mathbb S^{n-1}$, or a ``large'' $\mathbb S^n$. As a consequence, we obtain a classification of diffeomorphism types for hypersurfaces satisfying the pinching condition.

\begin{theorem}\label{thm:main theorem}
Every properly, isometrically immersed hypersurface $X:\M\to\mathbb S^{n+1}_K$ of $\mathbb S^{n+1}_K$ satisfying \eqref{eq:strict quadratic pinching} is diffeomorphic either to $\mathbb  S^n$ or to a connected sum of finitely many copies of $\mathbb S^1\times \mathbb S^{n-1}$. Indeed, there exists a 1-handlebody $\Omega$ and an immersion $\overline X:\Omega\to \mathbb S_K^{n+1}$ such that $\pd\Omega$ is diffeomorphic to $\M$ and $\overline X|_{\pd\Omega}=X$. If $X$ is an embedding, then so is $\overline X$.
\end{theorem}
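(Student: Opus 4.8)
The plan is to run mean curvature flow with surgery from $X\colon\M\to\mathbb S^{n+1}_K$, following the scheme of Huisken--Sinestrari \cite{HuSi09} (cf. \cite{HK2}), and then to read off the topology of $\M$ from the surgery data. The analytic input is precisely what has been assembled above: the pinching condition \eqref{eq:strict quadratic pinching} is preserved, and the cylindrical estimate together with the interior derivative estimates supply, with constants independent of the elapsed time and of the number of surgeries, both the neck-detection statement and the uniform control of the geometry on surgery regions that the construction requires. One fixes curvature thresholds $H_1<H_2<H_3$ depending only on $n$ and the pinching constants; whenever $\max_{\M_t}H$ reaches $H_3$ one halts the smooth flow, locates a disjoint family of necks of curvature $\approx H_2$ along which surgery is admissible, replaces each such neck by a pair of standard spherical caps, discards those connected components that are covered entirely by surgery necks and caps (and are therefore diffeomorphic to $\mathbb S^n$ or to $\mathbb S^1\times\mathbb S^{n-1}$), and restarts the smooth flow on what remains. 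One must check that this modification preserves \eqref{eq:strict quadratic pinching} and all the a priori estimates; the essential new point compared with the two-convex case is that the mean curvature changes sign, so neck detection and the surgery modification must be performed relative to the fixed scale set by $H_3$ rather than pointwise relative to $H$, but, granted the estimates of the previous sections, this proceeds as in \cite{HuSi09}.

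I would then show that the process terminates. Each surgery strictly decreases the $n$-dimensional area by at least a fixed amount --- the discarded necks and components carry area bounded below in terms of $H_2$ and $n$ --- the area is otherwise non-increasing, and the initial area is finite; hence only finitely many surgeries occur, after which the smooth flow runs on finitely many components. A component on which the curvature remains bounded exists for all time and, by the convergence theory available under \eqref{eq:strict quadratic pinching}, converges smoothly to a totally geodesic hypersphere, so is diffeomorphic to $\mathbb S^n$. A component whose curvature again reaches $H_3$ is, by the dichotomy established above, either uniformly convex in its high-curvature region --- whence it contracts to a round point in finite time and is diffeomorphic to $\mathbb S^n$ --- or admits a further admissible neck, which, since no more surgeries are performed, forces that component to be discarded and hence diffeomorphic to $\mathbb S^n$ or to $\mathbb S^1\times\mathbb S^{n-1}$. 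Thus every component that is discarded, that contracts, or that survives to infinite time is diffeomorphic to $\mathbb S^n$ or to $\mathbb S^1\times\mathbb S^{n-1}$.

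To recover the diffeomorphism type of $\M$, I would undo the surgeries in reverse order. Reversing a single neck surgery either glues two components along boundary spheres --- a connected sum --- or attaches a $1$-handle $\mathbb B^1\times\mathbb B^n$ to one component, which is the same as forming its connected sum with $\mathbb S^1\times\mathbb S^{n-1}$. Starting from the disjoint union of all pieces present at, or discarded before, extinction --- each a copy of $\mathbb S^n$ or of $\mathbb S^1\times\mathbb S^{n-1}$ --- forming the resulting iterated connected sum, and using $M\#\mathbb S^n\cong M$, we find that the connected manifold $\M$ is diffeomorphic to $\mathbb S^n$ or to a connected sum of finitely many copies of $\mathbb S^1\times\mathbb S^{n-1}$.

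Finally, for the filling, I would track the enclosed regions. If $X$ is an embedding then $\M$ separates $\mathbb S^{n+1}_K$ (since $H_n(\mathbb S^{n+1}_K;\mathbb Z_2)=0$), so $\M=\pd\Omega_0$ for a compact domain $\Omega_0$; the smooth flow sweeps out a family of compact domains, and each neck surgery corresponds to cutting the domain along a properly embedded $n$-disk. Each component surviving to extinction bounds a geodesic ball, and each discarded component bounds a ball or a copy of $\mathbb S^1\times\mathbb B^n$; filling these in and then reversing the surgeries --- each reversal re-gluing two boundary disks and hence attaching a $1$-handle --- exhibits the domain assembled at the initial time as a union of finitely many balls with finitely many $1$-handles attached, that is, a $1$-handlebody $\Omega$ with $\pd\Omega$ diffeomorphic to $\M$ and $\overline X\colon\Omega\hookrightarrow\mathbb S^{n+1}_K$ the inclusion, restricting to $X$ on the boundary. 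If $X$ is merely an immersion there is no global domain, but the same reverse-surgery bookkeeping produces an abstract $1$-handlebody $\Omega$ with an immersion $\overline X\colon\Omega\to\mathbb S^{n+1}_K$ extending $X$: one fills the standard caps and the surviving and discarded pieces with the corresponding model solids, which carry canonical immersions, and extends across each $1$-handle by a thin tube inserted where the neck was removed. The main obstacle in this programme is not the topological extraction, which is routine once surgery is available, but the verification --- carried out in the preceding sections and recalled in the first paragraph --- that the Huisken--Sinestrari surgery machinery survives the passage to $\mathbb S^{n+1}_K$ under the weaker, sign-changing pinching \eqref{eq:strict quadratic pinching}.
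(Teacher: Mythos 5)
Your proposal is correct and follows essentially the same route as the paper: run the Huisken--Sinestrari surgery algorithm using the cylindrical, gradient and Hessian estimates (which survive surgery with uniform constants), terminate via the area argument for finiteness of surgeries combined with the observation that components surviving for all time converge, by the cylindrical estimate, to totally geodesic hyperspheres, and recover the diffeomorphism type and the 1-handlebody by undoing the surgeries. The only point you gloss over that the paper flags explicitly is that the neck continuation step of \cite{HuSi09} needs the non-Euclidean modifications carried out in \cite[Section 8]{BrHu17}, but this does not affect the correctness of the argument.
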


Our arguments follow those of Huisken and Sinestrari \cite{HuSi09} --- however we first establish a cylindrical estimate (Theorem \ref{thm:cylindrical estimate}) by way of Stampacchia iteration, and then use this to obtain pointwise derivative estimates for the curvature using the maximum principle (Theorems \ref{thm:gradient estimate} and \ref{thm:Hessian estimate}% and \ref{thm:higher order estimates}
). These estimates allow us to %establish the crucial neck detection and neck continuation lemmas, which allow us to 
apply, virtually unmodified, the Huisken--Sinestrari surgery algorithm after pulling the flow locally up to the tangent space to $\mathbb S_K^{n+1}$. In particular, we do not use positive mean curvature and convexity estimates follow as a consequence of the cylindrical estimates.

The cylindrical estimate may be viewed as a partial generalization of Simons' famous theorem mentioned above. %Indeed, for minimal hypersurfaces, it immediately implies Simons' theorem when $n=4$. 
It also implies a new rigidity theorem for ancient solutions to mean curvature flow in the sphere (Corollary \ref{cor:rigidity ancient}). The key to proving it is a Poincar\'e-type inequality for $W^{2,2}$-functions supported away from ``cylindrical'' points of the hypersurface (Proposition \ref{prop:Poincare}).

We also obtain novel noncollapsing estimates (Corollary \ref{cor:noncollapsing}) which are preserved by the surgery algorithm (see \S \ref{ssec:pinching under surgery}). These are not actually required for the proof of Theorem \ref{thm:main theorem}, but we include them here as they may be of some use in obtaining further applications (cf. \cite{BuzanoHaslhoferHershkovitsTori,BuzanoHaslhoferHershkovitsSphere,Mramor}).

The work of Huisken and Sinestrari was generalized in a different direction by Brendle and Huisken \cite{BrHu17} who, building upon earlier work of Andrews \cite{AndrewsRiemannian}, studied the evolution (with surgeries) by a certain fully nonlinear flow of hypersurfaces in compact Riemannian ambient spaces satisfying the two-convexity condition
\[
\lambda_1+\lambda_2>2\sqrt{-K}\,,
\]
where $\lambda_1\le \lambda_2\le\dots\le\lambda_n$ are the principal curvatures of the hypersurface and $K\le 0$ is a lower bound for the sectional curvatures of the ambient space. For hypersurfaces of the sphere, this becomes ordinary two-convexity,
\[
\lambda_1+\lambda_2>0\,.
\]
A consequence of their work is a version of Theorem \ref{thm:main theorem} for hypersurfaces with the quadratic pinching condition \eqref{eq:strict quadratic pinching} replaced by two-convexity. Note that two-convexity is neither stronger nor weaker than the quadratic pinching condition \eqref{eq:strict quadratic pinching}. Indeed, unlike two-convexity, the quadratic condition \eqref{eq:strict quadratic pinching} is invariant under orientation reversal, and therefore allows the mean curvature to change sign. Moreover, the Brendle--Huisken approach holds only for \emph{embedded} hypersurfaces, since they require noncollapsing estimates (see \cite{An12,ALM13,AndrewsHanLiWei}) to obtain a gradient estimate for the curvature (cf. \cite{HK1}).

\section*{Acknowledgements}  
M.~Langford was supported by an Alexander von Humboldt fellowship and an Australian Research Council DECRA fellowship. 

H.~T.~Nguyen was supported by the EPSRC grant EP/S012907/1.

\section{Preliminaries}

\subsection{Hypersurfaces of $\mathbb{S}^{n+1}_K$}

Here we recall the fundamental identities for immersed hypersurfaces $X:\M\to\mathbb{S}^{n+1}_K$ of the sphere $\mathbb{S}^{n+1}_K$ of sectional curvature $K>0$. First, recall the Gauss equation
\begin{align}\label{eq:spatial Gauss}
\Rm_{ijkl}&=A_{ik}A_{jl}-A_{il}A_{jk}+K({g}_{ik}{g}_{jl}-{g}_{il}{g}_{jk})\,,
\end{align}
where $A^2_{ij}:=A_i{}^pA_{pj}$, and its traces
\begin{align*}
\Rc_{ik}&=(HA_{ik}-A^2_{ik})+(n-1)K{g}_{ik}
\end{align*}
and
\begin{align*}
\Sc&=H^{2}-|A|^{2}+n(n-1)K\,.
\end{align*}
 
The Codazzi equation,
\[
\nabla_{k}A_{ij}=\nabla_iA_{kj}\,,
\]
implies that the covariant differential $\nabla A$ of the second fundamental form is totally symmetric.

Combining the Gauss and Codazzi equations yields Simons' identity
\begin{align}\label{eq:Simons' identity}
\nabla_{\!(i}\nabla_{j)}A_{kl}-\nabla_{\!(k}\nabla_{l)}A_{ij} = A_{ij}A^2_{kl}-A_{kl}A^2_{ij}+K(g_{ij}A_{kl}-g_{kl}A_{ij}),
\end{align}
where brackets indicate symmetrization about the enclosed components, and its trace
\begin{align}\label{eq:contracted Simons' identity}
\Delta A_{ij}=\nabla_{i}\nabla_{j}H+HA^2_{ij}-|A|^{2}A_{ij}-K(Hg_{ij}-nA_{ij})\,.
\end{align}

By splitting $\nabla A$ into is trace and trace-free parts, we obtain the Kato inequality
\begin{align}\label{ineqn_Kato}
|\nabla A|^{2} \geq \frac{3}{n+2} | \nabla H|^2.
\end{align}

\subsection{Mean curvature flow in $\mathbb{S}_K^{n+1}$}

%In this section we gather together all the evolution equations that we will be using in this paper. Each of the equations is derived in \cite{Hu87}.

Next, we recall the fundamental identities for a family of hypersurfaces $X:\M\times I\to\mathbb{S}_K^{n+1}$ evolving by mean curvature flow. We make use of the \emph{time-dependent connection} of Andrews and Baker \cite{AnBa10}, which differentiates time-dependent tangent vector fields $V$ on $\M$ in space-time directions $\xi\in T(\M\times I)$ in the obvious way:
\bann
dX(\nabla_\xi V):={}& \big(D_\xi \big[dX(V)\big]\big)^\top,%\nonumber\\
%={}&dX([\pd_t,V]-HA(V))\,.
\eann
where $D$ is the pullback to $\M\times I$ of the ambient connection, $dX$ is the differential of $X$ and $(\,\cdot\,)^\top$ the projection onto $dX(T\M)$. Observe that
\ba\label{eq:spacetime connection}
\nabla_\xi V={}&[\pd_t,V]-HA(V)\,,
\ea
where $[\,\cdot\,,\cdot\,]$ denotes the Lie bracket and we conflate the second fundamental form with the Weingarten map. 

Note that $\nabla_\xi$ agrees with the Levi-Civita covariant derivative on the \emph{spatial tangent bundle} $\{\xi\in T(\M\times I):dt(\xi)=0\}$ (which we conflate with $T\M$) when $\xi$ has no $\partial_t$ component, where $\partial_t$ is the canonical tangent vector field to $I$. The main advantage of working with the time-dependent connection (as opposed to the Lie derivative) is that the induced metric tensor $g$ is $\nabla_t$-parallel:
\[
\nabla_{t}g= 0\,,
\]
where $\cd_t:=\cd_{\pd_t}$.

Denote by ${}^X\overline\Rm$ the curvature tensor of the pullback connection ${}^XD$ and let $U$ and $V$ be any pair of time-dependent tangent vector fields. Observe, on the one hand, that
\ba\label{eq:pullback identities}
{}^X\overline{\Rm}(\pd_t,U)V={}&dX\lsb\Rm(\pd_t,U)V+A(U,V)\cd H-\cd_VHA(U)\rsb\nonumber\\
{}&+\lsb\cd_tA(U,V)-\cd_U\cd_VH-HA^2(U,V)\rsb\nu\,,
\ea
where we conflate $\cd H$ with the gradient of $H$. On the other hand,
\bann
{}^X\overline{\Rm}(\pd_t,U)V={}&\overline{\Rm}(dX(\pd_t),dX(U))(dX(V))\\
={}&-H\overline{\Rm}(\nu,dX(U))dX(V)\\
={}&HKg(U,V)\nu\,.
\eann
Resolving \eqref{eq:pullback identities} into tangential and normal components, we obtain the ``temporal'' Gauss-Codazzi equations
\ba\label{eq:temporal Gauss}
\Rm(\pd_t,U)V=\cd_VHA(U)-A(U,V)\cd H
\ea
and
\ba\label{eq:temporal Codazzi}
\cd_tA=\cd^2H+HA^2+KHg\,,
\ea
respectively, where in \eqref{eq:temporal Codazzi} both sides are understood as tensors on the spatial tangent bundle.

Combining the Codazzi identity \eqref{eq:temporal Codazzi} with the contracted Simons identity \eqref{eq:contracted Simons' identity} yields an evolution equation for the second fundamental form:
\begin{align}
(\nabla_{t}-\Delta)A={}&(|A|^{2}+nK)A-2nK(A-\tfrac{1}{n}Hg)\,,\label{eq_sff}
\end{align}
where $\Delta$ is the spatial Laplacian. Tracing yields
\begin{align}
(\partial_{t}-\Delta)H={}&(|A|^{2}+nK)H\,,\label{eq_H}
\end{align}
which immediately yields
\begin{align}
(\partial_{t}-\Delta)H^{2}={}&-2|\nabla H|^{2}+2(|A|^{2}+nK)H^{2}\,.\label{evol_Hsquare}
\end{align}

Since $g$ is $\nabla_t$-parallel, \eqref{eq_sff} immediately yields
\begin{align}
(\partial_{t}-\Delta)|A|^{2}={}&-2|\nabla A|^{2}+2(|A|^{2}+nK)|A|^{2}\label{evol_Asquare}\\
{}&-4nK(|A|^{2}-\tfrac{1}{n}H^{2}),\nonumber
\end{align}
where $\nabla A$ is the spatial covariant differential of $A$. 

Given tensor fields $S$ and $T$, we denote by $S\ast T$ any tensor field resulting from linear combinations of metric contractions of $S\otimes T$.
By \eqref{eq:spacetime connection} and \eqref{eq:temporal Gauss},
\bann
\cd_t(\cd T)={}&\cd(\cd_tT)+A\ast A\ast \cd T+A\ast\cd A\ast T.
\eann
By \eqref{eq:spatial Gauss},
\bann
\Delta(\cd T)={}&\cd(\Delta T)+A\ast A\ast \cd T+K\ast \cd T+A\ast \cd A\ast T\,.
\eann
Thus,
\bann
(\cd_t-\Delta )(\cd A)={}&\cd\lsb(\cd_t-\Delta)A\rsb+A\ast A\ast \cd A+K\ast\cd A\\
={}&A\ast A\ast \cd A+K\ast\cd A\,,
\eann
and hence, by Young's inequality,
\begin{align}
(\partial_t-\Delta)|\nabla A|^2\le{}& -2 |\nabla^2 A|^2 + c_n(|A|^2+nK)|\nabla A|^2\,,\label{eqn_evolderiv}
\end{align}
where $c_n$ is a constant that depends only on $n$.

Similarly,
\bann
(\cd_t-\Delta )(\cd^2 A)={}&A\ast A\ast \cd^2 A+A\ast\cd A\ast \cd A+K\ast\cd^2 A\,,
\eann
and hence
\begin{align}
(\partial_t-\Delta)|\nabla^2\!A|^2\le{}&\!\!-2|\nabla^3\!A|^2\! + c_n\!\lsb (|A|^2\!+nK)|\nabla^2A|^2\!+|\cd A|^4\rsb,\label{eqn_evolderiv2}
\end{align}
where $c_n$ is a constant that depends only on $n$.

Similar inequalities hold for higher derivatives of $A$ since, by a straightforward induction argument,
\ba\label{eq:evolve derivatives of A}
(\cd_t-\Delta )(\cd^m A)={}&K\ast\cd^mA+\sum_{i+j+k=m}\cd^iA\ast \cd^jA\ast \cd^kA\,.
\ea

The following ``Bernstein estimates'' are a standard application of the ``rough'' evolution equations \eqref{eq:evolve derivatives of A}. For a proof in the Euclidean case (which carries over with minor modifications) see, for example, {\cite[Theorem 6.24]{EGF}}.

\begin{proposition}[Bernstein estimates]\label{prop:Bernstein}
Let $X:\M\times [0,\lambda K^{-1}]\to\mathbb{S}_K^{n+1}$ be a solution to mean curvature flow. If
\[
\max_{\M\times[0,\lambda K^{-1}]}\vert A\vert^2\le \Lambda_0K\,,
\]
then
\[
t^m\vert\cd^m A\vert^2\le \Lambda_{m}K\,,
\]
where $\Lambda_{m}$ depends only on $n$, $m$, $\lambda$ and $\Lambda_0$.
\end{proposition}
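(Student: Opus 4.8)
The plan is to prove the Bernstein estimates by a standard maximum-principle bootstrap, using the ``rough'' evolution equations \eqref{eq:evolve derivatives of A} together with the hypothesis $\vert A\vert^2\le \Lambda_0 K$ throughout $\M\times[0,\lambda K^{-1}]$. The key point is that \eqref{eq:evolve derivatives of A} schematically reads $(\cd_t-\Delta)(\cd^m A)=K\ast \cd^m A+\sum_{i+j+k=m}\cd^i A\ast\cd^j A\ast\cd^k A$, so that, bounding the curvature factor $|A|$ and $K$ crudely, one obtains inequalities of the form
\begin{align*}
(\partial_t-\Delta)|\cd^m A|^2\le -2|\cd^{m+1}A|^2+c(n,m)K\sum_{j=0}^{m}|\cd^j A|\,|\cd^{m-j}A|\,|\cd^m A|+c(n,m)K|\cd^m A|^2\,,
\end{align*}
with the worst cubic term being $|\cd^m A|^3$-type only when one of $i,j,k$ vanishes, i.e. controlled by $|A|\,|\cd^m A|^2\le \sqrt{\Lambda_0 K}\,|\cd^m A|^2$. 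I would first record these consequences of \eqref{eq:evolve derivatives of A}, noting that the only ``dangerous'' term at order $m$ is linear in $|\cd^m A|^2$ (times lower-order factors), and that all genuinely nonlinear contributions at order $m$ involve strictly lower-order derivatives $\cd^j A$ with $j<m$, which at each stage of the induction are already bounded.

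The induction itself: assuming $t^j|\cd^j A|^2\le \Lambda_j K$ for all $j\le m-1$, I would consider the test function $\varphi_m:=t^m|\cd^m A|^2+\beta_m\,t^{m-1}|\cd^{m-1}A|^2$ (or iterate a longer combination $\sum_{j\le m}\beta_j t^j|\cd^j A|^2$ with suitable constants $\beta_j$), and compute its evolution. The role of the subtracted gradient term $-2t^m|\cd^{m+1}A|^2$ in the evolution of the top-order piece is to absorb, after using Young's inequality on the cross terms, the ``bad'' term $+c\,t^{m-1}|\cd^m A|^2$ produced by differentiating $t^{m-1}|\cd^{m-1}A|^2$ in time and by the reaction terms; choosing $\beta_m$ large enough (depending on $n,m,\lambda,\Lambda_0,\ldots,\Lambda_{m-1}$) makes $(\partial_t-\Delta)\varphi_m\le C(n,m,\lambda,\Lambda_0,\ldots,\Lambda_{m-1})K^2$ wherever $\varphi_m$ is large, whence the maximum principle on the compact (or, in the noncompact case, properly immersed with the pinching giving the needed control at infinity) time-slab $[0,\lambda K^{-1}]$ gives $\varphi_m\le \Lambda_m K$, and in particular $t^m|\cd^m A|^2\le\Lambda_m K$. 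The base case $m=0$ is the hypothesis, and $m=1$ follows from \eqref{eqn_evolderiv} by exactly this device with $\varphi_1=t|\cd A|^2+\beta_1|A|^2$.

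The main obstacle — really the only subtlety — is applying the maximum principle on a possibly noncompact $\M$: one must justify that a first interior or boundary-in-time maximum of $\varphi_m$ exists, or argue via a cutoff/Omori–Yau-type argument that no ``escape to spatial infinity'' occurs. Since $X$ is properly immersed and the pinching condition \eqref{eq:strict quadratic pinching} together with the standing bound $|A|^2\le\Lambda_0 K$ controls the geometry uniformly, one can run a standard localization (multiply $\varphi_m$ by a compactly supported spatial cutoff, estimate the error terms, let the cutoff exhaust $\M$), which is routine; alternatively, as remarked in the statement, this is precisely \cite[Theorem 6.24]{EGF} in the Euclidean case and the ambient curvature terms $K\ast\cd^m A$ are lower-order and harmless. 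Everything else — the precise values of $c_n$, the Young's-inequality bookkeeping, the choice of the $\beta_j$ — is routine and I would not carry it out in detail.
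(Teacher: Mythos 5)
Your proposal is correct and is precisely the standard inductive maximum-principle bootstrap that the paper itself relies on: the paper gives no independent proof of Proposition \ref{prop:Bernstein}, simply invoking \cite[Theorem 6.24]{EGF} for the Euclidean case and noting that the ambient term $K\ast\cd^mA$ in \eqref{eq:evolve derivatives of A} is harmless, which is exactly your argument. The only blemish is your displayed inequality, where the reaction term should be the four-factor sum $\sum_{i+j+k=m}\vert\cd^iA\vert\,\vert\cd^jA\vert\,\vert\cd^kA\vert\,\vert\cd^mA\vert$ with no factor of $K$ (as written your term is dimensionally inconsistent), but your surrounding discussion treats these terms correctly, and the noncompactness worry is moot since a properly immersed hypersurface of the compact sphere $\mathbb S^{n+1}_K$ is itself compact.
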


\subsection{A Poincar\'e-type inequality}

The following Poincar\'e-type inequality (cf. \cite[5.4 Lemma]{Hu84}) is crucial to obtaining the cylindrical estimate (via Stampacchia iteration) in Section \ref{ssec:cylindrical estimate}.
\begin{proposition}\label{prop:Poincare}
Given $n\geq 3$, $\alpha\in(0,1)$ and $\eta\in(0,\frac{1}{n-2+\alpha}-\frac{1}{n-1})$ there exists $\gamma=\gamma(n,\alpha,\eta)>0$ with the following property: Let $X:\mc M^n\to \mbb S^{n+1}_K$ be a smoothly immersed hypersurface and let $u\in W^{1,2}(\mc M)$ be a function satisfying $\operatorname{spt}u\subset U_{\alpha,\eta}$, where, introducing the functions 
\[
f_{1,\eta}:=|A|^2-\left(\frac{1}{n-1}+\eta\right)H^2
\]
and
\[
g_{2,\alpha}:=|A|^2-\frac{1}{n-2+\alpha}H^2-2(2-\alpha)K\,,
\]
the ``acylindrical'' set $U_{\alpha,\eta}\subset \mathcal{M}$ is defined by
\[
U_{\alpha,\eta}:=\{x\in \mathcal{M}:f_{1,\eta}(x)\geq 0\geq g_{2,\alpha}(x)\}\,.
\]
For any $r\geq 1$,
\[
\gamma\int u^2W\,d\mu\leq \int u^2\left(r^{-1}\frac{|\nabla u|^2}{u^2}+r\frac{|\nabla A|^2}{W}+K\right)d\mu,
\]
where%, setting $a:=\frac{1}{n-2+\alpha}-\left(\frac{1}{n-1}+\eta\right)$ and $b:=2(2-\alpha)K$,
%\[
%W^2:=aH^2+b\,.
%\]
\[
W:=\left(\frac{1}{n-2+\alpha}-\frac{1}{n-1}-\eta+\frac{\alpha}{2n(n-1)}\right)H^2+2(2-\alpha)K\,.
\]
\end{proposition}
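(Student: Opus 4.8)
The plan is to establish the inequality pointwise-style via an algebraic estimate on the tensor $\cd A$ combined with integration by parts, mimicking Huisken's original argument from \cite{Hu84,Hu87} but adapted to track the precise pinching constants. The key observation is that on the acylindrical set $U_{\alpha,\eta}$ the function $W$ is comparable both to $|A|^2$-type quantities and to the quantity $f_{1,\eta}$ which measures how far the hypersurface is from being (one-convex/)cylindrical. First I would record the purely algebraic inequality: at any point where $f_{1,\eta}\ge 0$ (i.e. $|A|^2\ge(\tfrac1{n-1}+\eta)H^2$) one has a lower bound of the form $|\cd A|^2\ge \big(1+\tfrac{\alpha}{\text{(something)}}\big)\tfrac{|\cd H|^2}{\cdots}$, sharpening the Kato inequality \eqref{ineqn_Kato}; more precisely, when the second fundamental form is bounded away from umbilicity in the quantitative sense encoded by $f_{1,\eta}\ge 0$, the trace-free part of $\cd A$ cannot be too small relative to $\cd H$. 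This is the analogue of Huisken's Lemma in \cite{Hu84} and of the improved Kato inequalities appearing in the pinching literature.

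Next I would set up the integration by parts. Write $\cd_i(u^2 H \cd^i H) = u^2|\cd H|^2 + u^2 H\Delta H + 2uH\cd u\cdot\cd H$, and use the contracted Simons identity \eqref{eq:contracted Simons' identity} — or rather its consequence \eqref{eq_sff}/\eqref{evol_Asquare}, \eqref{evol_Hsquare} restated as elliptic identities — to convert $H\Delta H$ and $\la A,\Delta A\ra$ terms into curvature reaction terms. Concretely, from \eqref{evol_Asquare} and \eqref{evol_Hsquare} one extracts $2\la A,\Delta A\ra = \Delta|A|^2 + 2|\cd A|^2 - (\text{reaction})$ and similarly for $H^2$, so that a suitable linear combination $|A|^2 - c H^2$ has Laplacian controlled by $|\cd A|^2$, $|\cd H|^2$ and a term proportional to $g_{2,\alpha}\le 0$ on the support of $u$. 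Testing against $u^2$ and integrating by parts moves one derivative onto $u$, producing the $\tfrac{|\cd u|^2}{u}\cdot u$-type terms after Cauchy--Schwarz with the free parameter $r$ (one side absorbed as $r^{-1}|\cd u|^2$, the other as $r\,u^2|\cd A|^2/W$). The sign condition $g_{2,\alpha}\le 0$ on $\operatorname{spt}u$ is exactly what lets the otherwise-bad reaction term be thrown away (it has a favorable sign), and $f_{1,\eta}\ge 0$ is what powers the improved Kato step. Combining, the zeroth-order term that survives is a positive multiple of $W\,u^2$, which after choosing the constant $\gamma$ small enough (depending on $n,\alpha,\eta$ and the slack in the improved Kato inequality) gives the claim; the $K$-terms are harmless since they come with the right sign or can be absorbed into the $K$-term on the right.

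I expect the main obstacle to be bookkeeping the constants so that the coefficient of $u^2 W$ that emerges is genuinely positive — this is where the precise form of $W$, in particular the mysterious extra summand $\tfrac{\alpha}{2n(n-1)}H^2$, must be engineered. That term almost certainly arises from the $-2nK(|A|^2-\tfrac1n H^2)$ reaction in \eqref{eq_sff}/\eqref{evol_Asquare}: when one forms $f_{1,\eta}$ and throws away $g_{2,\alpha}\le0$, the leftover combination of $|A|^2$, $H^2$ and $K$ terms must be matched term-by-term against $W$, and the admissible range $\eta<\tfrac1{n-2+\alpha}-\tfrac1{n-1}$ is precisely the condition making the $H^2$-coefficient of $W$ strictly positive. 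So the real work is: (i) prove the sharpened Kato inequality with an explicit gain on $U_{\alpha,\eta}$; (ii) assemble the elliptic identity for the correct linear combination of $|A|^2$ and $H^2$; (iii) integrate by parts and apply Young's inequality with weight $r$; (iv) verify the surviving zeroth-order coefficient is a positive multiple of $W$ under the stated hypotheses on $\eta$ and $\alpha$. Steps (i) and (iv) are the delicate ones; (ii) and (iii) are routine given the evolution equations already recorded above.
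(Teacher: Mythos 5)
Your overall architecture (an algebraic lower bound exploiting $f_{1,\eta}\ge 0\ge g_{2,\alpha}$, Simons' identity, one integration by parts, and Young's inequality with the free weight $r$) is the right family of argument, but the two steps you yourself flag as delicate --- the improved Kato inequality and the positivity of the surviving zeroth-order term --- are exactly where the proposal breaks down, and the paper's proof is organised specifically to avoid both.

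First, the ``sharpened Kato inequality on $U_{\alpha,\eta}$'' is not available: $f_{1,\eta}\ge 0$ is a zeroth-order condition on $A$ and places no pointwise constraint on the independent tensor $\nabla A$, so it cannot improve the purely algebraic inequality \eqref{ineqn_Kato}, which can be saturated at a point carrying any prescribed value of $A$. Second, in your scheme the zeroth-order term surviving the integration by parts is the reaction term of the \emph{contracted} Simons identity \eqref{eq:contracted Simons' identity}, essentially $Z=\tfrac12\sum_{i,j}(\lambda_i-\lambda_j)^2(\lambda_i\lambda_j+K)$. Under the strong pinching of \cite[5.4 Lemma]{Hu84} this is positive, which is why Huisken's argument closes; but on $U_{\alpha,\eta}$ principal curvatures of both signs with $\lambda_i\lambda_j<-K$ are permitted, so $Z$ is sign-indefinite and cannot simply be ``thrown away'' or bounded below by a positive multiple of $W$ times anything. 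The paper circumvents this by working instead with the manifestly nonnegative squared Simons tensor $|\mathrm{C}|^2=\sum_{i,j}(\lambda_i-\lambda_j)^2(\lambda_i\lambda_j+K)^2$: it proves the quantitative rigidity estimate $\gamma\,W^3\le|\mathrm{C}|^2+K^2$ on $U_{\alpha,\eta}$ by a compactness/contradiction argument (a rescaled limit of counterexamples would have to be an $m$-cylinder with $m\in(1,2-\alpha]$, impossible for integer $m$), and then uses the \emph{uncontracted} Simons identity to write $|\mathrm{C}|^2=\mathrm{C}\ast\nabla^2A$, so that a single integration by parts against $u^2W^{-2}$ produces precisely the first-order right-hand side of the proposition. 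No Kato improvement and no sign analysis of $Z$ is needed; to repair your route you would either have to establish the lower bound on $Z$ over $U_{\alpha,\eta}$ directly (which I do not believe holds in this generality) or pass to the squared tensor as the paper does. (Incidentally, the extra summand $\tfrac{\alpha}{2n(n-1)}H^2$ in $W$ is not engineered for this proposition at all --- any positive quantity comparable to $H^2+K$ would do here; it is chosen for the reaction and gradient terms in Lemma \ref{lem:evol_fsigma}.)
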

\begin{proof}
By a straightforward scaling argument, it suffices to prove the claim when $K=1$. By a standard approximation argument, we may assume that $u$ is smooth. 

Recall Simons' identity
\[
\nabla_{(i}\nabla_{j)}A_{kl}-\nabla_{(k}\nabla_{l)}A_{ij}=\mathrm{C}_{ijkl}\,,
\]
where the brackets denote symmetrization and
\[
\mathrm{C}:=A\otimes A^2-A^2\otimes A+(g\otimes A-A\otimes g)\,.
\]
We claim that
\begin{equation}\label{eq:Poincare1}
\gamma\, W^3\leq |\mathrm{C}|^2+1\quad\text{in}\quad U_{\alpha,\eta}
\end{equation}
on any immersed hypersurface %\footnote{In fact, we prove that the inequality holds in a stronger algebraic sense.} 
$X:\mathcal{M}^n\to \mathbb{S}^{n+1}$ for some positive $\gamma=\gamma(n,\alpha,\eta)$. Indeed, if this is not the case, then there is a sequence $\{\vec \lambda^k\}_{k\in\mathbb N}$ of vectors $\vec\lambda^k\in \R^n$ (corresponding to principal curvatures of a sequence of hypersurfaces) satisfying
\[
f_{1,\eta}(\vec\lambda^k):=|\vec\lambda^k|^2-\frac{1}{n-1}\tr(\vec\lambda^k)^2-\eta \tr(\vec\lambda^k)^2\geq 0
\]
and
\[
g_{2,k}(\vec\lambda^k):=|\vec\lambda^k|^2-\frac{1}{n-2+k}\tr(\vec\lambda^k)^2-2(2-k)\leq 0\,,
\]
where $\tr(\vec\lambda):=\sum_{i=1}^n\lambda_i$, but
\[
\frac{|\mathrm{C}(\vec\lambda^k)|^2+1}{W^3(\vec\lambda^k)}\to 0
\]
as $k\to\infty$, where
\[
|\mathrm{C}(\vec\lambda)|^2:=\sum_{i,j=1}^n(\lambda_j-\lambda_i)^2(\lambda_i\lambda_j+1)^2
\]
and
\[
W(\vec\lambda):=\left(\frac{1}{n-2+\alpha}-\frac{1}{n-1}-\eta+\frac{\alpha}{2n(n-1)}\right)\tr(\vec\lambda)^2+2(2-\alpha)\,.
\]
Set $r^2_{k}:=W(\vec\lambda^k)^{-1}\to 0$ and $\hat\lambda^k:=r_k\vec\lambda^k$. Observe that
\[
\vert \hat\lambda^k\vert\leq \frac{2n(n-1)}{\alpha(n-2)}=:c(n,\alpha)%1+\frac{\frac{1}{n-1}+\eta}{\frac{1}{n-2+\alpha}-\frac{1}{n-1}-\eta}=:c(n,\alpha,\eta)
\]
and hence, up to a subsequence, $\hat\lambda^k\to \hat\lambda\in \R^n$. Computing
%\[
%1=r^2_k W^2(\vec\lambda^k)= aH^2(\hat\lambda^k)+r^2_k b\,,
%\]
\[
\left(|\hat \lambda^k|^2-\frac{1}{n-1}\tr(\hat \lambda^k)^2\right)-\eta \tr(\hat\lambda^k)^2=r^2_k f_{1,\eta}(\vec\lambda^k)\geq0
\]
and
\[
\left(|\hat \lambda^k|^2-\frac{1}{n-2+\alpha}\tr(\hat \lambda^k)^2\right)-2(2-\alpha)r_k^2=r^2_k g_{2,\alpha}(\vec\lambda^k)\leq 0\,,
\]
we find
%\begin{equation}\label{eq:Poincarecontra1}
%H^2(\hat\lambda)=a^{-1}>0\,,
%\end{equation}
\begin{equation}\label{eq:Poincarecontra2}
\left(|\hat \lambda|^2-\frac{1}{n-1}\tr(\hat \lambda)^2\right)\geq \eta \tr(\hat \lambda)^2
\end{equation}
and
\begin{equation}\label{eq:Poincarecontra3}
\left(|\hat \lambda|^2-\frac{1}{n-2+\alpha}\tr(\hat \lambda)^2\right)\leq 0\,.
\end{equation}
On the other hand,
\[
\sum_{i,j=1}^n\left(\hat\lambda^k_i\hat\lambda^k_j(\hat\lambda^k_j-\hat\lambda^k_i)\right)^2 +2r_k^2\hat\lambda^k_i\hat\lambda^k_j(\hat\lambda^k_j-\hat\lambda^k_i)^2 +r_k^4(\hat\lambda^k_j-\hat\lambda^k_i)^2 = r_k^6|\mathrm{C}(\vec\lambda^k)|^2
\]
so that
\begin{equation}\label{eq:Poincarecontra4}
\sum_{i,j=1}^n\left(\hat\lambda_i\hat\lambda_j(\hat\lambda_j-\hat\lambda_i)\right)^2=0\,.
\end{equation}
Together, %\eqref{eq:Poincarecontra1}, 
\eqref{eq:Poincarecontra2}, \eqref{eq:Poincarecontra3} and \eqref{eq:Poincarecontra4} are in contradiction: \eqref{eq:Poincarecontra4} implies that $\hat\lambda$ has a null component of multiplicity $m$ and a non-zero component, $\kappa$ say, of multiplicity $n-m$. %Since \eqref{eq:Poincarecontra1} implies that $H^2(\hat\lambda)>0$, 
The inequalities \eqref{eq:Poincarecontra2} and \eqref{eq:Poincarecontra3} then yield
\[
\left(n-m-\frac{(n-m)^2}{n-1}\right)\kappa^2\geq \eta(n-m)^2\kappa^2>0
\]
and
\[
\left(n-m-\frac{(n-m)^2}{n-2+\alpha}\right)\kappa^2\leq 0\,.
\]
which together imply that $m\in(1,2-\alpha]$, which is impossible. This proves \eqref{eq:Poincare1}. 

Using \eqref{eq:Poincare1}, we can estimate
\begin{comment}
\begin{align*}
\gamma\!\int\! u^2Wd\mu\leq{}& \int  \frac{u^2}{W^{2}}\left(|\mathrm{C}|^2+1\right)d\mu\\
={}& \int \frac{u^2}{W^{2}}\mathrm{C}^{ijkl}\left(\nabla_i\nabla_jA_{kl}-\nabla_k\nabla_lA_{ij}\right)d\mu+\int  \frac{u^2}{W^{2}}\,d\mu\\
={}&-\int\!\frac{u^2}{W^{2}}\left(2\mathrm{C}^{ijkl}\frac{\nabla_iu}{u}-2\mathrm{C}^{ijkl}\frac{\nabla_iW}{W}+\nabla_i\mathrm{C}^{ijkl}\right)\!\nabla_jA_{kl}\,d\mu\\
{}&+\int\!\frac{u^2}{W^{2}}\left(2\mathrm{C}^{ijkl}\frac{\nabla_ku}{u}-2\mathrm{C}^{ijkl}\frac{\nabla_kW}{W}+\nabla_k\mathrm{C}^{ijkl}\right)\!\nabla_lA_{ij}\,d\mu\\
{}&+\int  \frac{u^2}{W^{2}}\,d\mu\\
\leq{}&C\left[\frac{u^2}{W^{2}}\left(W^{\frac{3}{2}}\frac{|\nabla u|}{u}+W^{\frac{1}{2}}|\nabla W|+W|\nabla A|\right)|\nabla A|\,d\mu\right.\\
{}&\qquad\left.+\int u^2\,d\mu\right]\\
\leq{}&C\left[\int u^2\left(\frac{|\nabla u|}{u}+\frac{|\nabla A|}{W^{\frac{1}{2}}}\right)\frac{|\nabla A|}{W^{\frac{1}{2}}}\,d\mu+\int u^2\,d\mu\right],
\end{align*}
\end{comment}
\begin{align*}
\gamma\!\int\! u^2Wd\mu\leq{}& \int  \frac{u^2}{W^{2}}\left(|\mathrm{C}|^2+1\right)d\mu\\
={}& \int \frac{u^2}{W^{2}}\left(\mathrm{C}\ast\nabla^2A+1\right)\,d\mu\\
={}&\int\!\frac{u^2}{W^{2}}\!\left(\frac{\nabla u}{u}\ast\mathrm{C}+\frac{\nabla W}{W}\ast\mathrm{C}+\nabla\mathrm{C}\right)\!\ast\nabla A\,d\mu+\int  \frac{u^2}{W^{2}}\,d\mu\\
\leq{}&C\left[\int\frac{u^2}{W^{2}}\left(W^{\frac{3}{2}}\frac{|\nabla u|}{u}+W^{\frac{1}{2}}|\nabla W|+W|\nabla A|\right)|\nabla A|\,d\mu\right.\\
{}&\qquad\left.+\int u^2\,d\mu\right]\\
\leq{}&C\left[\int u^2\left(\frac{|\nabla u|}{u}+\frac{|\nabla A|}{W^{\frac{1}{2}}}\right)\frac{|\nabla A|}{W^{\frac{1}{2}}}\,d\mu+\int u^2\,d\mu\right],
\end{align*}
where $C$ denotes any constant which depends only on $n$, $\alpha$ and $\eta$. The claim now follows from Young's inequality.
\end{proof}

\subsection{The Sobolev inequality}

We shall also require the following Sobolev inequality for the Stampacchia iteration argument in Section \ref{ssec:cylindrical estimate}. It may be obtained from \cite[Theorem 2.1]{HoSp74} (cf. \cite{MiSi73}) by the substitution $u\mapsto u^2$.
\begin{theorem}\label{thm:Sobolev}
Let $X:\M\to \mathbb{S}_K^{n+1}$, $n\ge 3$, be a hypersurface of the sphere of sectional curvature $K>0$ and let $u:\M\to \R$ a $W^{1,2}$ be function. If
\[
\mu(\mathrm{spt}(u))\leq \frac{\omega_n}{n+1}K^{-\frac{n}{2}}\,,
\]
then
\begin{equation}\label{eq:Sobolev}
\left(\int u^{2^\ast}d\mu\right)^{\frac{1}{2^\ast}}\leq c_n\int\left(|\nabla u|^2+u^2H^2\right)d\mu\,,
\end{equation}
where $q^\ast:=\frac{nq}{n-q}$, and $c_n$ is a constant that depends only on $n$.
\end{theorem}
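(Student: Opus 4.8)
The plan is to obtain the inequality from the Michael--Simon/Hoffman--Spruck Sobolev inequality for submanifolds, exactly as the statement indicates. By the scaling argument used in the proof of Proposition \ref{prop:Poincare}, it suffices to treat the case $K=1$; and by replacing $u$ with $\vert u\vert$ and then mollifying (after first multiplying by a cutoff if $\M$ is noncompact, which is harmless since $\operatorname{spt}u$ has finite measure) we may assume that $u$ is a nonnegative function in $C^\infty_c(\M)$, with support of volume arbitrarily close to that of $\operatorname{spt}u$. The admissibility hypothesis of \cite[Theorem 2.1]{HoSp74} (whose support-volume threshold, for an ambient sectional curvature bound equal to $1$, is exactly $\frac{\omega_n}{n+1}$) is then satisfied, and that result furnishes a constant $c_n$, depending only on $n$, with
\[
\left(\int h^{\frac{n}{n-1}}\,d\mu\right)^{\frac{n-1}{n}}\leq c_n\int\left(\vert\nabla h\vert+h\vert H\vert\right)d\mu
\]
for every nonnegative $h\in C^\infty_c(\M)$ of sufficiently small support.

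The next step is to feed into this inequality the function $h=u^{\beta}$ with $\beta:=\frac{2(n-1)}{n-2}$; this realizes the substitution $u\mapsto u^2$ at the level of the integrand in the form that converts the $W^{1,1}$-Sobolev exponent into the $W^{1,2}$-Sobolev exponent, since $\frac{\beta n}{n-1}=\frac{2n}{n-2}=2^\ast$. Using $\vert\nabla h\vert=\beta u^{\beta-1}\vert\nabla u\vert$ and $2(\beta-1)=2^\ast$, two applications of the Cauchy--Schwarz inequality bound the right-hand side above by
\[
c_n\left(\beta\int u^{\beta-1}\vert\nabla u\vert\,d\mu+\int u^{\beta-1}u\vert H\vert\,d\mu\right)\leq C_n\left(\int u^{2^\ast}d\mu\right)^{1/2}\left[\left(\int\vert\nabla u\vert^2d\mu\right)^{1/2}+\left(\int u^2H^2\,d\mu\right)^{1/2}\right],
\]
while the left-hand side equals $\left(\int u^{2^\ast}d\mu\right)^{\frac{n-1}{n}}$. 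Since $\frac{n-1}{n}-\frac12=\frac{1}{2^\ast}$, dividing through by $\left(\int u^{2^\ast}d\mu\right)^{1/2}$ (the case $\int u^{2^\ast}d\mu=0$ being trivial) gives $\left(\int u^{2^\ast}d\mu\right)^{1/2^\ast}\leq C_n\big[(\int\vert\nabla u\vert^2d\mu)^{1/2}+(\int u^2H^2d\mu)^{1/2}\big]$; squaring and using $(a+b)^2\leq 2a^2+2b^2$ yields \eqref{eq:Sobolev}, with the left-hand side understood as $\left(\int u^{2^\ast}d\mu\right)^{2/2^\ast}$ (as forced by scale-invariance under $K\mapsto\lambda^{-2}K$).

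There is no serious obstacle here; the argument is entirely routine once the Hoffman--Spruck inequality is in hand. The only points meriting attention are the constant bookkeeping needed to identify the precise admissibility threshold in \cite[Theorem 2.1]{HoSp74} with $\mu(\operatorname{spt}u)\leq\frac{\omega_n}{n+1}K^{-n/2}$ (equivalently, checking that this bound forces their curvature--volume parameter to stay away from its critical value, so that $c_n$ is genuinely dimensional), and the verification that the reduction to nonnegative, smooth, compactly supported $u$ can be performed while keeping the support-volume condition valid. Both are straightforward.
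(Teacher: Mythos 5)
Your argument is correct and is exactly the route the paper intends: the paper's ``proof'' is the single remark that the estimate follows from \cite[Theorem 2.1]{HoSp74} by a power substitution, and you have simply carried that out, with the substitution $h=u^{2(n-1)/(n-2)}$ (rather than the literal $u\mapsto u^{2}$ of the remark, which would only produce the exponent $\tfrac{2n}{n-1}$) being the one that actually yields $2^{\ast}=\tfrac{2n}{n-2}$. Your observation that the left-hand side must carry the exponent $\tfrac{2}{2^{\ast}}$ rather than $\tfrac{1}{2^{\ast}}$ is also correct --- the stated form fails under $u\mapsto\e u$ --- and this is the form in which the inequality is actually used later in \eqref{eq:uest1}.
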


\section{Preserved curvature conditions}

\subsection{Quadratic curvature condition}\label{ssec:quadratic pinching}
If the strict quadratic curvature inequality \eqref{eq:strict quadratic pinching} holds on a hypersurface of $\mathbb{S}_K^{n+1}$, $n\ge 3$, then we can find some $\alpha>0$ such that
\begin{align}\label{eq:uniform quadratic pinching}
|A|^{2}\leq\frac{1}{n-2+\alpha}H^{2}+2(2-\alpha)K\,.  
\end{align}
This inequality is preserved under mean curvature flow when $n\ge 3$ (note that $\alpha>\frac{2}{3}$ when $n=3$).

%Having fixed such an $\alpha$, it will be useful to define the constants $\beta_{n}=2(2-\alpha)$ and $\alpha_{n}=\frac{1}{n-2+\alpha}$.

\begin{proposition}[Cf. {\cite[1.4 Lemma]{Hu86}}]
Let $X:\mc M^n\times[0,T)\to \mbb S_K^{n+1}$, $n\ge 3$, be a solution to mean curvature flow such that \eqref{eq:uniform quadratic pinching} holds on $\mc M^n\times\{0\}$ for some $\alpha\in(0,1)$. If $n\ge 4$, or if $n=3$ and $\alpha\ge \frac{2}{3}$, then \eqref{eq:uniform quadratic pinching} holds on $\mc M^n\times\{t\}$ for all $t\in[0,T)$.
\end{proposition}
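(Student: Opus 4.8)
The strategy is the standard tensor maximum principle argument for preserved curvature pinching à la Huisken. Write the pinching inequality in the form $Q:=|A|^2-\tfrac{1}{n-2+\alpha}H^2-2(2-\alpha)K\le 0$ and compute, using the evolution equations \eqref{evol_Asquare} and \eqref{evol_Hsquare}, that
\[
(\partial_t-\Delta)Q=-2\Big(|\nabla A|^2-\tfrac{1}{n-2+\alpha}|\nabla H|^2\Big)+2(|A|^2+nK)Q-4nK\Big(|A|^2-\tfrac1n H^2\Big)+\tfrac{2}{n-2+\alpha}(|A|^2+nK)H^2- \text{(lower order $K$-terms cancelling }2(2-\alpha)K\text{)}.
\]
At an interior point where $Q=0$ (and $Q<0$ before), I want to show the right-hand side is $\le 0$, after discarding the good gradient term and using the constraint $Q=0$ to eliminate $|A|^2$ in favour of $H^2$. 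The zeroth-order terms reduce to a polynomial inequality in $H^2$ and $K$ (with the reaction term $-4nK(|A|^2-\tfrac1n H^2)$ actually helping, since $|A|^2\ge \tfrac1n H^2$), which should hold precisely in the stated range of $\alpha$ and $n$; this is where the dimension restriction $n\ge 4$ (or $n=3$, $\alpha\ge\tfrac23$) enters. One subtlety: since $\M$ need not be compact, one cannot apply the maximum principle directly; I would instead follow Huisken--Sinestrari and use a localized maximum principle (or a cutoff/barrier argument exploiting that the pinching is a \emph{closed} condition and is strict initially), or invoke a version valid for complete solutions of bounded curvature on compact time intervals.

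The genuinely delicate point is the gradient term: $|\nabla A|^2-\tfrac{1}{n-2+\alpha}|\nabla H|^2$ need not be nonnegative pointwise (the Kato inequality \eqref{ineqn_Kato} only gives the factor $\tfrac{3}{n+2}$, which is worse than $\tfrac{1}{n-2+\alpha}$ when $n$ is small). The way around this, exactly as in Huisken's work, is \emph{not} to discard the gradient terms entirely but to borrow part of the reaction term: one proves an estimate of Huisken--Hamilton type,
\[
|\nabla A|^2-\tfrac{1}{n-2+\alpha}|\nabla H|^2\ge \varepsilon(n,\alpha)\,|\nabla A|^2
\]
is \emph{false} in general, so instead one uses the sharper pointwise gradient inequality valid when the pinching $Q\le 0$ holds, namely a bound of the form $|\nabla H|^2\le c_n(|\nabla A|^2 - |\nabla H|^2) + \dots$ together with the reaction terms, or — more simply — one notes that along the flow the quantity to control is $Q$ itself and at a point where $Q$ first touches $0$ one has $\nabla Q=0$, i.e. $\nabla|A|^2=\tfrac{1}{n-2+\alpha}\nabla H^2$, which couples $\nabla A$ and $\nabla H$ and lets one absorb the bad gradient term. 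I would carry this out by combining $\nabla Q=0$ with the Codazzi symmetry of $\nabla A$ to get the needed pointwise control, following \cite[1.4 Lemma]{Hu86} essentially verbatim.

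Concretely, the steps in order: (i) derive $(\partial_t-\Delta)Q$ from \eqref{evol_Asquare}, \eqref{evol_Hsquare}; (ii) at an interior spatial maximum of $Q$ at the first time $t_0$ where $\sup Q=0$, use $\nabla Q=0$ and $\Delta Q\le 0$; (iii) show the gradient contribution is nonpositive by the Huisken argument coupling $\nabla A$, $\nabla H$ via $\nabla Q=0$; (iv) substitute $|A|^2=\tfrac{1}{n-2+\alpha}H^2+2(2-\alpha)K$ into the reaction terms and check the resulting polynomial inequality in $H^2\ge 0$, $K>0$ holds for the stated $(n,\alpha)$ — using $|A|^2\ge\tfrac1n H^2$ for the favourable $-4nK(|A|^2-\tfrac1n H^2)$ term; (v) handle noncompactness by a localization/approximation argument. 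I expect step (iv), the zeroth-order polynomial check, to be routine but to be exactly what forces $n\ge 4$ (resp. $\alpha\ge\tfrac23$ when $n=3$); the main genuine obstacle is step (iii), the gradient term, which requires the nontrivial pointwise gradient estimate rather than a crude Kato inequality.
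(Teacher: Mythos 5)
Your plan correctly identifies the overall shape (evolution equation for the pinching quantity plus a maximum principle), but it misdiagnoses the one genuinely important point, and the replacement you propose for it is exactly where the argument has a gap. You claim the gradient term $|\nabla A|^2-\tfrac{1}{n-2+\alpha}|\nabla H|^2$ cannot be controlled by the Kato inequality and must instead be absorbed using $\nabla Q=0$ at a first touching point. In fact, in the hypothesis range the Kato inequality \eqref{ineqn_Kato} suffices: $\tfrac{1}{n-2+\alpha}\le\tfrac{3}{n+2}$ is equivalent to $2n\ge 8-3\alpha$, which holds automatically for $n\ge 4$ and, for $n=3$, holds precisely when $\alpha\ge\tfrac23$. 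So the gradient term is pointwise nonnegative and can simply be discarded — and this comparison is exactly where the stated restriction on $(n,\alpha)$ enters, not (as you expect) the zeroth-order check: with $b_n:=2(2-\alpha)$, the paper groups $2b_nK(|A|^2+nK)-4nK(|A|^2-\tfrac1nH^2)=-2K(2n-b_n)\bigl(|A|^2-\tfrac{2}{2n-b_n}H^2-\tfrac{nb_n}{2n-b_n}K\bigr)$, notes $\tfrac{2}{2n-b_n}=\tfrac{1}{n-2+\alpha}$, and only needs $b_n\le n$ (i.e. $n\ge 4-2\alpha$, which at $n=3$ is the weaker condition $\alpha\ge\tfrac12$) to bound the whole reaction by $2\bigl(|A|^2+(b_n-n)K\bigr)$ times the pinching quantity itself. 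The maximum principle then applies directly, with no touching-point substitution needed.

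By contrast, your step (iii) is unsubstantiated: $\nabla Q=0$ only gives the contracted relation $\langle A,\nabla A\rangle=\tfrac{1}{n-2+\alpha}H\,\nabla H$, which does not yield a pointwise bound of $|\nabla H|^2$ by $|\nabla A|^2$ better than Kato, and it is also not how \cite[1.4 Lemma]{Hu86} proceeds (Huisken likewise uses the $\tfrac{3}{n+2}$ inequality). Since you explicitly decline to use Kato — on the incorrect grounds that $\tfrac{3}{n+2}$ is ``worse than $\tfrac{1}{n-2+\alpha}$'' in the allowed range — the key step of your proof is left without a working mechanism; once you check the numbers, the delicate point evaporates. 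Two minor remarks: your worry about noncompactness is moot, since a properly immersed hypersurface of the compact sphere $\mathbb S_K^{n+1}$ is compact; and the first-touching-point formulation, while legitimate, is unnecessary once the reaction term is written as a multiple of the pinching quantity.
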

\begin{proof}
Suppose that \eqref{eq:uniform quadratic pinching} holds on $\M\times\{0\}$ for some $\alpha\in(0,1)$ when $n\ge 4$ or some $\alpha\in(\frac{2}{3},1)$ when $n=3$. Setting
\[
a_{n}:=\frac{1}{n-2+\alpha}\;\; \text{and}\;\; b_{n}:=2(2-\alpha)\,,
\]
we compute, using \eqref{evol_Hsquare} and \eqref{evol_Asquare},
\begin{align*}
(\partial_{t}-\Delta)\!\left(|A|^{2}-a_{n} H^{2}\right)={}&- 2 \left(|\nabla A|^{2} 
- a_{n}|\nabla H|^{2}\right) + 2 b_{n} K ( |A|^{2} + nK) \\
&+ 2 ( |A|^{2} - a_{n} H^{2} - b_{n} K ) ( | A|^{2} + n K )\\
{}& - 4 nK \left(|A|^{2} - \tfrac{1}{n}H^{2}\right).
\end{align*}
Since $\frac{2}{2n-b_{n}}=a_n$ and $\frac{n}{2n-b_{n}}\leq 1$, we can estimate
\begin{align*}
2b_{n} K (|A|^{2} + nK)-4 nK&\left(|A|^{2}- \tfrac{1}{n} H^{2} \right)\\
={}& 2 K\left((b_{n}- 2n)|A|^{2} + 2 H^{2} + b_{n} n K\right) \\
={}&-2K( 2n-b_{n})\left(|A|^{2} - \tfrac{ 2}{2n - b_{n}} H^{2} - \tfrac{ nb_{n} }{ 2n - b_{n}} K\right)\\
\leq{}&-2K(2n-b_{n})\left(|A|^{2}-a_n H^{2}-b_{n}K\right)\,.
\end{align*}
Estimating $\frac{2}{2n-b_{n}} \leq \frac{3}{n+2}$ and applying \eqref{ineqn_Kato}, we arrive at
\begin{align*}
(\partial_{t}-\Delta)\big(|A|^{2}-a_{n} H^{2}-{}&b_nK\big)\\
\leq{}& 2\left(|A|^{2}+(b_n-n)K\right)\left(|A|^{2}-a_{n} H^{2}-b_{n}K\right).
\end{align*}
The claim now follows from the maximum principle.
\end{proof}

\subsection{Rigidity}

The quadratic curvature condition \eqref{eq:strict quadratic pinching} is optimal for cylindrical estimates and connected sum theorems in dimensions $n\geq 4$. Indeed, consider the hypersurfaces $M^{k, n-k}(r,s) = \mathbb S^{k}(r) \times \mathbb S^{n-k}(s)$, $r^{2} + s^{2} = 1$, of $\mathbb S^{n+1}$, where $\mathbb S^k(r)$ is the $k$ dimensional sphere of radius $r$. The second fundamental forms have eigenvalues $ \lambda$, with multiplicity $k$, and $\mu$, with multiplicity $n-k$, such that $ \lambda \mu = -1$. 

\begin{comment}
Consider first the case $k=1$. Without loss of generality, we may assume that $\lambda = -\frac{s}{r}$ and $\mu =\frac{r}{s}$. In that case,
\begin{align*}
|A|^{2} &= \frac{( n -1)r^{4} + s^{4}}{r^{2}s^{2}}
\end{align*}           
and
\begin{align*}
H &= \frac{(n-1) r^{2} - s^{2} }{ rs},
\end{align*}           
so that
\begin{align*}
H^{2} &= \frac{(n-1)^2r^{4}-2(n-1)r^2s^2+s^{4}}{ r^{2}s^{2}}.   
\end{align*}           
The hypersurface is minimal when $n-1=\frac{s^{2}}{r^{2}}$; i.e. $s^{2} = \frac{n-1}{n}$. As a minimal submanifold, it is stationary under the mean curvature flow. It lies outside of Huisken's curvature condition \eqref{eq:Huisken pinching} because it satisfies 
\begin{align*}
|A|^2 - \frac1{n-1} H^2 - 2 = %\frac{(n-2)}{(n-1)} \frac{ s^2 } { r^2}
n-2 >0.
\end{align*}
On the other hand, %\textcolor{red}{[This is not right: since $\vert A\vert^2=n$, it should be $n-4$].}
\begin{align*}
%|A|^2 - \frac{1}{n-2} H^2 - 4 = \frac{(n-1) \left( 1 - \frac{ n-1}{ n-2} \right) r ^ 4 - 4 r ^ 2 s ^ 2 } { r ^ 2 s ^ 2 } < 0.
|A|^2 - \frac{1}{n-2} H^2 - 4=n-4\,,
\end{align*}
so it lies in the boundary of the condition \eqref{eq:strict quadratic pinching} when $n=4$. %We shall see that the mean curvature flow may converge to such minimal submanifolds in infinite time (but not finite time).

%\textcolor{red}{[If we do converge to a minimal submanifold, it will satisfy $\vert A\vert^2\leq 4\leq n$. Thus, by Simons' theorem, it will be a totally geodesic sphere, unless $n=4$, in which case it could also be a Clifford torus.]}
\end{comment}

Consider the case $k=2$. In this case,
\begin{align*}
 |A|^{2} & = \frac{ 2 s^{4} + ( n-2) r^{4} } { r^{2}s^{2}}
\end{align*}
and
\begin{align*}
 H%&= - 2 \frac {s}{r} + ( n-2) \frac{r}{s} \\
 & = \frac{ ( n-2) r^{2} - 2s ^{2} } { rs},
\end{align*}
so that
\begin{align*}
 H^{2} & = \frac{(n-2)^2r^{4} + 4 s^{4} - 4(n-2) r^{2} s^{2} } { r^{2} s^{2} },
\end{align*}
which then yields% \textcolor{red}{[I think the coefficient should be $\frac{2(n-4)}{n-2}$].}
\begin{align*}
|A|^2-\frac{1}{n-2}H^2-4 = \frac{2(n-4)}{(n-2)} \frac{s^2}{r^2}. 
\end{align*}
Thus, in every dimension $n\ge 4$, we can find, for any $\varepsilon>0$, a hypersurface of the topological type $ \mathbb{S} ^2 \times \mathbb{S}^{ n-2}$ satisfying 
\begin{align*}
|A| ^2 - \frac{1}{ n-2}H^2 - 4 \le \varepsilon\,.
\end{align*} 
So the quadratic bound \eqref{eq:strict quadratic pinching} in Theorem \ref{thm:main theorem} is the best that can be achieved when $n\ge 4$.

%\textcolor{red}{\texttt{Is the right condition positive scalar curvature when $n=3$?}}

\begin{comment}
\subsection{Convexity and quadratic bounds} 
Here we investigate the relationship between quadratic curvature bounds and convexity. 

Observe that
\begin{align}\label{eqn_conv}
|A|^{2} - \frac{ 1}{n-1} H^{2} ={}& -2 \lambda_1 \lambda_2 + \left(\lambda_1 + \lambda_2 - \frac{H}{n-1} \right ) ^2\\
{}& + \sum_{ l=3 } ^{n} \left( \lambda_l - \frac{H}{ n-1} \right) ^ 2   \nonumber
\end{align}
and 
\begin{align}
\label{eqn_two_conv}|A|^{2} - \frac{1}{ n-2} H^{2} ={}& - 2 (\lambda_1 \lambda_2 + \lambda_1\lambda_3 + \lambda_2\lambda_3)\nonumber\\
{}&+ \left(\lambda_1 +\lambda_2+\lambda_3 - \frac{H}{n-2} \right )^2+ \sum_{ l= 4 } ^ n \left ( \lambda _ l - \frac { H}{n-2} \right )^2. 
\end{align}

By \eqref{eqn_conv}, any hypersurface of $\mathbb{S}^{n+1}_K$ satisfying $\vert A\vert^2-\frac{1}{n-1}H^2\le 0$ is necessarily locally convex. Moreover, if $\vert A\vert^2-\frac{1}{n-1}H^2=0$, then $\lambda_1=0$, while $\lambda_2=\dots=\lambda_n=:\lambda$. When $K=0$ (i.e. $\mathbb{S}^{n+1}_K=\R^{n+1}$), this implies that the hypersurface is locally isometric to $\R\times \mathbb{S}^{n-1}_{\lambda^2}$.

\textcolor{red}{\texttt{What is the relationship between quadratic pinching and $k$-convexity?}}

\end{comment}

\subsection{Inscribed/exscribed curvature pinching}

We now present a noncollapsing estimate for mean curvature flow under the quadratic pinching condition \eqref{eq:strict quadratic pinching}. As mentioned in the introduction, this will not actually be required to obtain the main result (Theorem \ref{thm:main theorem}).

As in \cite{ALM13,AndrewsHanLiWei}, we  define the \emph{inscribed} and \emph{exscribed curvatures} $\overline k$ and $\underline k$ of an embedded hypersurface $\M\hookrightarrow\mathbb S_K^{n+1}$ by
\[
\overline k(p):=\sup_{q\neq p}\frac{2\left\langle p-q,\nu(p)\right\rangle}{\left\Vert p-q\right\Vert^2}\;\;\text{and}\;\; \underline k(p):=\inf_{q\neq p}\frac{2\left\langle p-q,\nu(p)\right\rangle}{\left\Vert p-q\right\Vert^2},
\]
respectively, where the inner product and norm are those of $\R^{n+2}$. Note that, under orientation reversal, $\overline k\mapsto-\underline k$ and $\underline k\mapsto-\overline k$. Observe also that
\[
\overline k(p)\ge\limsup_{q\to p}\frac{2\left\langle p-q,\nu(p)\right\rangle}{\left\Vert p-q\right\Vert^2}=\lambda_n
\]
and
\[
\underline k(p)\le\liminf_{q\to p}\frac{2\left\langle p-q,\nu(p)\right\rangle}{\left\Vert p-q\right\Vert^2}=\lambda_1\,.
\]
In particular,
\[
\overline k\ge \tfrac{1}{n}H\;\;\text{and}\;\;\underline k\le \tfrac{1}{n}H\,.
\]

In \cite{AndrewsHanLiWei} (cf. \cite[Proposition 2.1]{AL16}), it was shown that
\begin{equation}
(\pd_t-\Delta)\overline k\le (\vert A\vert^2+nK)\overline k-2nK\big(\overline k-\tfrac{1}{n}H\big)
\end{equation}
and
\begin{equation}
(\pd_t-\Delta)\underline k\ge (\vert A\vert^2+nK)\underline k+2nK\big(\tfrac{1}{n}H-\underline k\big)
\end{equation}
in the viscosity sense along a solution to mean curvature flow.

By the calculations in \S \ref{ssec:quadratic pinching}, the function
\begin{equation}\label{eq:noncollapsing F}
F:=\sqrt{4K+\frac{1}{n-2}H^2-\vert A\vert^2}
\end{equation}
is positive and satisfies
\begin{equation}\label{eq:positive supersolution}
(\pd_t-\Delta)F\ge (\vert A\vert^2+nK)F-(2n-4)KF
\end{equation}
on a solution to mean curvature flow which initially satisfies the pinching condition \eqref{eq:uniform quadratic pinching}. 
%
\begin{comment}
Similar calculations show that the function
\begin{equation}\label{eq:noncollapsing G}
G:=\sqrt{aH^2+bK}\,,
\end{equation}
where
\[
a:=\frac{1}{n-2+\alpha}-\frac{1}{n-1}+\frac{\alpha}{2n(n-1)}\;\;\text{and}\;\; b:=2(2-\alpha)\,,
\]
is positive and satisfies
\[
(\pd_t-\Delta)G\le (\vert A\vert^2+nK)G
\]
on a solution to mean curvature flow which initially satisfies the pinching condition \eqref{eq:uniform quadratic pinching}. 
\end{comment}
%
Thus, for such a solution,
\begin{align*}
(\pd_t-\Delta)\frac{\overline k}{F}\le{}&-4K\left(\frac{\overline k}{F}-\frac{1}{2}\frac{H}{F}\right)+2\left\langle\cd\frac{\overline k}{F},\cd\log F\right\rangle\\
\le{}&-4K\left(\frac{\overline k}{F}-C\right)+2\left\langle\cd\frac{\overline k}{F},\cd\log F\right\rangle
\end{align*}
\begin{comment}
and
\begin{align*}
(\pd_t-\Delta)\frac{\underline k}{G}\ge{}&2\left\langle\cd\frac{\underline k}{G},\cd\log G\right\rangle
\end{align*}
\end{comment}
in the viscosity sense, where
\[
C^2:=\frac{(n-2)(n-2+\alpha)}{4\alpha}.
\]
Since the pinching condition is invariant under orientation reversal, the maximum principle then yields the following noncollapsing estimates.

\begin{proposition}\label{prop:noncollapsing}
Let $X:\M\times[0,T)\to\mathbb S_K^{n+1}$, $n\ge 3$, be a solution to mean curvature flow such that $X_0:\M\to\mathbb S_K^{n+1}$ is embedded and satisfies \eqref{eq:uniform quadratic pinching} with $\alpha\in(0,1)$ when $n\ge 4$ or $\alpha\in(\frac{2}{3},1)$ if $n=3$. If
\[
\max_{\M\times\{0\}}\frac{\underline k}{F}\ge-\mu\;\;\text{and}\;\; \max_{\M\times\{0\}}\frac{\overline k}{F}\le \mu
\]
for some $\mu\ge C:=\sqrt{\frac{(n-2)(n-2+\alpha)}{4\alpha}}$, then
\begin{equation}\label{eq:noncollapsing}
\frac{\underline k}{F}(p,t)\ge -C-(\mu-C)\,\mathrm{e}^{-4Kt}\;\;\text{and}\;\;\frac{\overline k}{F}(p,t)\le C+(\mu-C)\,\mathrm{e}^{-4Kt}
\end{equation}
for all $(p,t)\in \M\times[0,T)$.
\end{proposition}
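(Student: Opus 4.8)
The plan is to feed the evolution inequality for $\overline k/F$ recorded just above the statement into a parabolic comparison principle, comparing it against the spatially homogeneous solution of the associated ODE. First I would observe that, since $X_0$ is a \emph{proper} embedding of $\M$ into the \emph{compact} manifold $\mathbb S^{n+1}_K$, the manifold $\M$ is closed; hence no completeness or growth hypotheses are needed to apply the maximum principle. Next, set
\[
\varphi(t):=C+(\mu-C)\,\mathrm e^{-4Kt}\,,
\]
and note that $\varphi$ solves $\varphi'=-4K(\varphi-C)$ with $\varphi(0)=\mu$, so that $\varphi$ is a (spatially constant, classical, hence viscosity) solution of
\[
\pd_t\varphi=\Delta\varphi-4K(\varphi-C)+2\langle\cd\varphi,\cd\log F\rangle\,.
\]

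The main step: by the calculation in the excerpt (which uses the preserved pinching \eqref{eq:uniform quadratic pinching} and the evolution inequalities for $\overline k$ and $F$), the ratio $\overline k/F$ is a viscosity subsolution of this same equation, and by hypothesis $\overline k/F\le\mu=\varphi(0)$ at $t=0$. The comparison principle for continuous viscosity sub/supersolutions of a parabolic equation with smooth bounded coefficients on the closed manifold $\M\times[0,T)$ — here $\cd\log F$ is smooth and bounded since $F$ is smooth and positive — then yields $\overline k/F\le\varphi(t)$ on $\M\times[0,T)$, which is the first inequality in \eqref{eq:noncollapsing}. Equivalently, one may argue by Hamilton's trick: $w:=\overline k/F-\varphi$ satisfies $(\pd_t-\Delta)w\le-4Kw+2\langle\cd w,\cd\log F\rangle$ in the viscosity sense with $w(\cdot,0)\le0$; at an interior spatial maximum of $w(\cdot,t)$ the gradient term vanishes and $\Delta w\le 0$, so $\tfrac{d}{dt}\max_\M w\le-4K\max_\M w$ wherever this maximum is nonnegative, and Gr\"onwall's inequality gives $\max_\M w\le0$ for all $t$.

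For the companion estimate I would invoke orientation reversal: under it $\overline k\mapsto-\underline k$, $A\mapsto-A$, $H\mapsto-H$, while $|A|^2$ and hence $F$ are unchanged, and the pinching condition \eqref{eq:uniform quadratic pinching} is preserved; applying the inequality just proved to the orientation-reversed flow gives $-\underline k/F\le C+(\mu-C)\mathrm e^{-4Kt}$, i.e. $\underline k/F\ge-C-(\mu-C)\mathrm e^{-4Kt}$. The only genuine subtlety — and the step I would be most careful about — is that $\overline k$ and $\underline k$ are merely continuous (being a pointwise supremum and infimum), so the evolution inequalities and the maximum principle must be read in the viscosity (support-function) sense, exactly as in \cite{ALM13,AndrewsHanLiWei}; since the excerpt already furnishes the evolution inequality for the ratio $\overline k/F$ in the viscosity sense, this amounts to citing the appropriate parabolic comparison theorem rather than proving anything new.
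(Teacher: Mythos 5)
Your proposal is correct and follows the same route as the paper, which derives the viscosity inequality $(\pd_t-\Delta)\tfrac{\overline k}{F}\le -4K(\tfrac{\overline k}{F}-C)+2\langle\cd\tfrac{\overline k}{F},\cd\log F\rangle$ and then simply invokes the maximum principle together with orientation-reversal invariance; your ODE comparison with $\varphi(t)=C+(\mu-C)\mathrm e^{-4Kt}$ and the viscosity-sense caveat are exactly the details the paper leaves implicit. The only point worth noting is that the stated hypothesis $\max_{\M\times\{0\}}\underline k/F\ge-\mu$ should be read (as you implicitly do) as $\min_{\M\times\{0\}}\underline k/F\ge-\mu$, consistent with Corollary \ref{cor:noncollapsing}.
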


\begin{corollary}\label{cor:noncollapsing}
For any solution $X:\M\times[0,T)\to\mathbb S_K^{n+1}$ to mean curvature flow in $\mathbb S_K^{n+1}$, $n\ge 3$, with embedded initial condition satisfying \eqref{eq:uniform quadratic pinching}, there is a constant $\mu=\mu(n,\alpha,\displaystyle\min_{\M\times\{0\}}\tfrac{\underline k}{\vert H\vert+\sqrt{K}},\displaystyle\max_{\M\times\{0\}}\tfrac{\overline k}{\vert H\vert+\sqrt{K}})<\infty$ such that 
\[
\underline k\ge -\mu\big(\vert H\vert+\sqrt{K}\big)\;\;\text{and}\;\; \overline k\le \mu\big(\vert H\vert+\sqrt{K}\big)\,.
\]
\end{corollary}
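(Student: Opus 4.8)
The plan is to deduce the corollary directly from the scale-invariant noncollapsing estimate of Proposition \ref{prop:noncollapsing}; the only additional ingredient needed is a two-sided comparison between the function $F$ of \eqref{eq:noncollapsing F} and the quantity $\vert H\vert+\sqrt K$, which is an immediate consequence of the quadratic pinching \eqref{eq:uniform quadratic pinching}. Since $K>0$, the denominator $\vert H\vert+\sqrt K$ never vanishes, so all the ratios below are well defined.

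\textbf{Step 1 (comparing $F$ with $\vert H\vert+\sqrt K$).} Using \eqref{eq:uniform quadratic pinching} we estimate
\[
F^2=4K+\tfrac{1}{n-2}H^2-\vert A\vert^2\ \ge\ 2\alpha K+\tfrac{\alpha}{(n-2)(n-2+\alpha)}H^2\ \ge\ \delta\big(K+H^2\big),
\]
where $\delta=\delta(n,\alpha)>0$, while trivially $F^2\le 4K+\tfrac{1}{n-2}H^2\le 4(K+H^2)$. Since $K+H^2$ is comparable to $(\vert H\vert+\sqrt K)^2$ with universal constants, there exist $0<c_1\le c_2$ depending only on $n$ and $\alpha$ with $c_1(\vert H\vert+\sqrt K)\le F\le c_2(\vert H\vert+\sqrt K)$ on $\M\times[0,T)$.

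\textbf{Step 2 (feeding the hypotheses into Proposition \ref{prop:noncollapsing}).} Put $\Lambda_+:=\max\{0,\ \max_{\M\times\{0\}}\tfrac{\overline k}{\vert H\vert+\sqrt K}\}$ and $\Lambda_-:=\max\{0,\ -\min_{\M\times\{0\}}\tfrac{\underline k}{\vert H\vert+\sqrt K}\}$, and set $\mu_0:=\max\{C,\ c_1^{-1}\Lambda_+,\ c_1^{-1}\Lambda_-\}$, where $C=\sqrt{(n-2)(n-2+\alpha)/(4\alpha)}$ is the constant of Proposition \ref{prop:noncollapsing}. The lower bound $F\ge c_1(\vert H\vert+\sqrt K)$ from Step 1 then gives $\tfrac{\overline k}{F}\le\mu_0$ and $\tfrac{\underline k}{F}\ge-\mu_0$ on $\M\times\{0\}$ --- at points where the relevant normalized initial value has the favourable sign the bound is automatic since $\mu_0\ge C>0$, and otherwise it follows by dividing the bound $\overline k\le\Lambda_+(\vert H\vert+\sqrt K)$ (resp. $\underline k\ge-\Lambda_-(\vert H\vert+\sqrt K)$) by $F$.

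\textbf{Step 3 (converting back).} Proposition \ref{prop:noncollapsing}, applied with this $\mu=\mu_0\ge C$, yields $\tfrac{\underline k}{F}\ge-C-(\mu_0-C)\mathrm e^{-4Kt}\ge-\mu_0$ and $\tfrac{\overline k}{F}\le C+(\mu_0-C)\mathrm e^{-4Kt}\le\mu_0$ on all of $\M\times[0,T)$. Multiplying by $F$ and invoking the upper bound $F\le c_2(\vert H\vert+\sqrt K)$ from Step 1 gives $\underline k\ge-\mu_0 c_2(\vert H\vert+\sqrt K)$ and $\overline k\le\mu_0 c_2(\vert H\vert+\sqrt K)$, so the corollary holds with $\mu:=c_2\mu_0$, which depends only on $n$, $\alpha$ and the two stated initial ratios.

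There is no genuine analytic difficulty here, as Proposition \ref{prop:noncollapsing} carries the weight; the only point requiring care is the bookkeeping of signs, since $\overline k$, $\underline k$ and their normalized initial values may each be of either sign, and the two-sided comparison of Step 1 must be applied to the correct side of each inequality when passing between the normalizations $F$ and $\vert H\vert+\sqrt K$. (One should also keep in mind the constraint $\alpha>\tfrac23$ when $n=3$ already built into \eqref{eq:uniform quadratic pinching} for the flow, which is inherited here.)
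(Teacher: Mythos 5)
Your proof is correct and is exactly the deduction the paper intends: the corollary is stated as an immediate consequence of Proposition \ref{prop:noncollapsing}, and the only content is the two-sided comparison $c_1(\vert H\vert+\sqrt K)\le F\le c_2(\vert H\vert+\sqrt K)$ coming from the preserved pinching \eqref{eq:uniform quadratic pinching}, which you carry out correctly (including the sign bookkeeping and the reading of the proposition's hypothesis on $\underline k$ as a minimum). Nothing to add.
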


\subsection{The surgery class}  

Given $n\ge 3$, $K>0$, $\alpha\in(0,1)$, $V<\infty$ and $\Theta<\infty$, we shall work with the class $\mathcal{C}^n_K(\alpha,V,\Theta)$ of hypersurfaces $X:\M\to\mathbb{S}^{n+1}_K$ satisfying
\begin{enumerate}
\item $\displaystyle \max_{\mathcal{M}^n\times\{0\}}\big(|A|^2-\tfrac{1}{n-2+\alpha}H^2\big)\le 2(2-\alpha)K^2$,
\item $\displaystyle \mu_0(\mathcal{M}^n)\le VK^{-\frac{n}{2}}$, and
\item $\displaystyle \max_{\mathcal{M}^n\times\{0\}}H^2\leq \Theta K$,
\end{enumerate}
where $\mu_t$ is the measure induced by $X(\cdot,t)$. Every properly immersed hypersurface of $\mathbb{S}^{n+1}_K$ which satisfies the strict quadratic pinching condition \eqref{eq:strict quadratic pinching} lies in the class $\mathcal{C}^n_K(\alpha,V,\Theta)$ for some choice of parameters $\alpha$, $V$, and $\Theta$ (with $\alpha>\frac{2}{3}$ when $n=3$). The first two conditions are preserved under mean curvature flow; the third is not. However, it is possible to preserve the class $\mathcal{C}^n_K(\alpha,V,\Theta)$ under mean curvature flow-with-surgery in the following sense. Given an initial hypersurface in the class $\mathcal{C}^n_K(\alpha,V,\Theta)$, we will be able to choose $\Theta_3>\Theta_2>\Theta_1$ (depending only on $n$, $\alpha$, $V$ and $\Theta$) such that, if we stop the flow when $H^2$ reaches the threshold $\Theta_3K$, then regions of squared mean curvature at least $\Theta_1K$ form `necks' of such quality that they may be replaced by high quality `caps', resulting in a new hypersurface in the class $\mathcal{C}^n_K(\alpha,V,\Theta_2)$. Since the estimates pass, with the same constants, to flows modified by such surgeries, the procedure can be repeated with the same constants each time $H^2$ reaches the threshold $\Theta_3K$.

If, in addition, the initial datum $X_0:\M\to\mathbb S_K^{n+1}$ is an embedding, then we can also find $\mu>0$ such that the inscribed and exscribed curvatures are $\mu$-pinched, in the sense that
\[
\min_{\M\times\{0\}}\frac{\underline k}{\vert H\vert+\sqrt{K}}\ge -\mu\;\;\text{and}\;\; \max_{\M\times\{0\}}\frac{\overline k}{\vert H\vert+\sqrt{K}}\le \mu\,.
\]
These inequalities are preserved under mean curvature flow in the sense of Corollary \ref{cor:noncollapsing}.

\begin{remark}
The class $\mathcal{C}^n_K(\alpha,V,\Theta)$ is defined slightly differently than the class $\mathcal{C}(R,\alpha_0,\alpha_1,\alpha_2)$ of hypersurfaces of $\R^{n+1}$ introduced in \cite{HuSi09}. We have found it convenient to use $K$ as the scale parameter here; formally, it is related to the scale parameter $R$ by the equation $R^{-2}=\Theta K$. The parameter $\alpha$ corresponds to $\alpha_0$ and $V$ corresponds to $\alpha_3\Theta^{\frac{n}{2}}$. The parameter $\alpha_1$ (a scale covariant lower bound for the mean curvature) is not required here --- it is needed in \cite{HuSi09} in order to bound the time of existence $T$ from above. Our estimates do not depend on a bound for $T$ and, indeed, some components of the flow may exist for all time.
\end{remark}

\section{The key estimates for smooth flows}

\subsection{The cylindrical estimate}\label{ssec:cylindrical estimate}

The following estimate provides a suitable analogue of the Huisken--Sinestrari ``cylindrical estimate'' \cite[Theorem 5.3]{HuSi09}.
 
\begin{theorem}[Cylindrical estimate (Cf. \cite{Hu84,Hu87,HuSi09,Ng15})]\label{thm:cylindrical estimate}
Let $X:\M\times[0,T)\to\mathbb{S}_K^{n+1}$, $n\geq 3$, be a solution to mean curvature flow with initial condition in the class $\mathcal{C}_K^n(\alpha,V,\Theta)$. Assume, further, that $\alpha>\frac{2}{3}$ when $n=3$. There exist $\delta=\delta(n,\alpha)>0$, $\eta_0=\eta_0(n,\alpha)>0$ and, for every $\eta\in(0,\eta_0)$, $C_\eta=C_\eta(n,\alpha,V,\Theta,\eta)<\infty$ such that
\begin{align}\label{eq:cylindrical estimate}
|A|^2-\frac{1}{n-1}H^2 \leq \eta H^2 + C_\eta K\vts\mathrm{e}^{-2\delta Kt} \quad\text{in}\quad \mc M^n\times[0,T)\,.
\end{align}
\end{theorem}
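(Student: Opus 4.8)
The plan is to run a Stampacchia iteration on a high power of
\[
f_\sigma:=\frac{f_{1,\eta}}{W^{1-\sigma/2}}\,,\qquad f_{1,\eta}:=|A|^2-\Big(\tfrac1{n-1}+\eta\Big)H^2\,,
\]
for small constants $\eta,\sigma>0$ depending only on $n,\alpha$, where $W$ is the function of Proposition~\ref{prop:Poincare}. Two observations set the stage. First, since $\mathbb S^{n+1}_K$ is compact so is $\M$, so integration by parts is unproblematic; and \S\ref{ssec:quadratic pinching} shows $g_{2,\alpha}\le 0$ is preserved, so that $\{f_{1,\eta}>0\}\subseteq U_{\alpha,\eta}$ at every time. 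Consequently $\operatorname{spt}(f_\sigma-k)_+^{p/2}\subseteq U_{\alpha,\eta}$ for all $k\ge 0$, $p\ge 1$, and such truncations are admissible test functions in the Poincaré inequality. Second, on $U_{\alpha,\eta}$ the preserved pinching gives $0<f_{1,\eta}\le W$ and $|A|^2\le c(n,\alpha,\eta)W$, and $W$ is comparable to $H^2+K$; hence an $L^\infty$-bound $f_\sigma\le CK^{\sigma/2}\mathrm e^{-2\delta Kt}$ implies \eqref{eq:cylindrical estimate} after Young's inequality, at the cost of slightly enlarging $\eta$. So one runs the argument with $\eta/2$ in place of $\eta$, and fixes $\eta_0,\delta$ accordingly.

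\textbf{Evolution of $f_\sigma$.} First I would compute, from \eqref{evol_Asquare}--\eqref{evol_Hsquare},
\[
(\partial_t-\Delta)f_{1,\eta}=-2\big(|\nabla A|^2-(\tfrac1{n-1}+\eta)|\nabla H|^2\big)+2(|A|^2-nK)f_{1,\eta}-4nK\big(\tfrac1{n(n-1)}+\eta\big)H^2\,,
\]
and then, combining this with the \S\ref{ssec:quadratic pinching}-type evolution of $W$ through the quotient rule, observe that the leading reaction term of $(\partial_t-\Delta)f_\sigma$ becomes $\sigma(|A|^2+nK)f_\sigma-4nKf_\sigma$: the $W$-normalisation cancels the dangerous $O(|A|^2)f_\sigma$ piece, leaving only the small term $\sigma(|A|^2+nK)f_\sigma$ together with the favourable zeroth-order terms $-4nKf_\sigma$ and $-4nK(\tfrac1{n(n-1)}+\eta)\tfrac{H^2}{f_{1,\eta}}f_\sigma$. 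A key point is to check that these dominate the $O\big(K\tfrac{|A|^2+nK}{W}\big)f_\sigma$ error coming from $\nabla^2W$ and the $K$-part of $W$ — this is where the particular coefficient $\tfrac{\alpha}{2n(n-1)}$ in $W$ (also needed for Proposition~\ref{prop:Poincare}) and the smallness of $\eta$ enter. Using \eqref{ineqn_Kato} to bound $|\nabla A|^2-(\tfrac1{n-1}+\eta)|\nabla H|^2\ge c_n|\nabla A|^2$ and tidying up the gradient errors, I expect to arrive, wherever $f_\sigma>0$, at
\[
(\partial_t-\Delta)f_\sigma\le\big\langle\tilde b,\nabla f_\sigma\big\rangle-\frac{c_n|\nabla A|^2}{W^{1-\sigma/2}}+\sigma C\,(|A|^2+nK)f_\sigma-\delta_1 Kf_\sigma\,,\qquad |\tilde b|\le C\frac{|\nabla A|}{W^{1/2}}\,,
\]
with $C,\delta_1>0$ depending only on $n,\alpha,\eta$.

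\textbf{Stampacchia iteration.} Next I would multiply by $p(f_\sigma-k)_+^{p-1}$ and integrate over $\M$, using $\partial_t\,d\mu_t=-H^2\,d\mu_t$ and discarding the favourable $\int H^2(f_\sigma-k)_+^p$. The term $\tfrac{4(p-1)}{p}\int|\nabla(f_\sigma-k)_+^{p/2}|^2$ absorbs the transport term after Young, and the good term $-c_np\int(f_\sigma-k)_+^p|\nabla A|^2/W$ (using $f_{1,\eta}\le W$) absorbs the $|\nabla A|^2$-leftovers once $p$ is large. The bad term $\sigma Cp\int(f_\sigma-k)_+^p(|A|^2+nK)\le\sigma C'p\int(f_\sigma-k)_+^pW$ is then controlled by applying Proposition~\ref{prop:Poincare} to $u=(f_\sigma-k)_+^{p/2}$, with a judicious choice of the free parameter $r$: it is bounded by $\sigma C'p$ times a combination of $\int|\nabla(f_\sigma-k)_+^{p/2}|^2$, $\int(f_\sigma-k)_+^p|\nabla A|^2/W$ and $\int(f_\sigma-k)_+^pK$, all of which are absorbed into the good gradient terms and a fraction of $\delta_1Kp\int(f_\sigma-k)_+^p$ provided $\sigma\le\sigma_0(n,\alpha,\eta,p)$. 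This gives
\[
\frac{d}{dt}\int(f_\sigma-k)_+^p\,d\mu_t\le-c(n,p)\int\big|\nabla(f_\sigma-k)_+^{p/2}\big|^2\,d\mu_t-\delta_2Kp\int(f_\sigma-k)_+^p\,d\mu_t\,.
\]
With $k=0$ and $p=p_0$ large, together with $f_\sigma(\cdot,0)\le C_3K^{\sigma/2}$ and $\mu_0(\M)\le VK^{-n/2}$ (conditions (1)--(3) of $\mathcal{C}^n_K(\alpha,V,\Theta)$), this yields $\int f_\sigma^{p_0}\,d\mu_t\le C_4\,\mathrm e^{-\delta_2Kp_0t}K^{\sigma p_0/2-n/2}$. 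Feeding the full family of inequalities into the parabolic local-boundedness (Stampacchia mean-value) iteration — with Theorem~\ref{thm:Sobolev} supplying the $L^2\to L^{2^\ast}$ step, its measure hypothesis being met on the super-level sets appearing in the iteration by Chebyshev and the $L^{p_0}$-bound — will then express $\sup_{\M\times\{t\}}f_\sigma$ in terms of $\big(\int_{t-K^{-1}}^{t}\!\int f_\sigma^{p_0}\,d\mu_s\,ds\big)^{1/p_0}$ (times powers of $K$ that collect to $K^{\sigma/2}$), and hence give $f_\sigma\le C_\eta K^{\sigma/2}\mathrm e^{-\delta Kt}$, which is \eqref{eq:cylindrical estimate}.

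\textbf{Main obstacle.} The crux will be the evolution step: arranging $(\partial_t-\Delta)f_\sigma$ so that, after a \emph{single} multiplication by $p(f_\sigma-k)_+^{p-1}$, every error term matches one of the three terms on the right of Proposition~\ref{prop:Poincare} and is absorbed by the energy $\int|\nabla(f_\sigma-k)_+^{p/2}|^2$, the good $|\nabla A|^2/W$ term, and a fraction of the favourable $Kf_\sigma$ term. In particular this forces one to verify that the ambient-curvature terms leave a \emph{net} negative zeroth-order coefficient, which is delicate and relies on the precise shape of $W$ and on $\eta$ being small. Carrying the exponential factor through the iteration with all constants independent of $T$ is the remaining bookkeeping; the hypotheses defining $\mathcal{C}^n_K(\alpha,V,\Theta)$ enter exactly to bound the base-level quantities, hence every constant in the iteration.
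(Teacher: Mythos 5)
Your proposal is correct and follows essentially the same route as the paper: the same $W$-normalised test function (your $f_\sigma$ is the paper's $f_{\sigma,\eta}$ up to relabelling $\sigma$), the same evolution inequality with the coercive $-\delta K f$ and $-|\nabla A|^2/W$ terms, the Poincar\'e inequality of Proposition \ref{prop:Poincare} applied to $u=(f-k)_+^{p/2}$ to absorb the $\sigma(|A|^2+nK)f$ reaction term, the resulting exponentially decaying $L^p$ bound, and a Sobolev-based level-set iteration (the paper runs Stampacchia's lemma on the global space-time level sets $U(k)$ rather than a slab-by-slab mean-value iteration, but this is a cosmetic variant). The one point you flag as the "main obstacle" --- verifying that the zeroth-order ambient-curvature terms have a net negative coefficient --- is indeed the delicate step, carried out in the paper's Lemma \ref{lem:evol_fsigma} via the quantity $Z$ and the explicit choice of $\delta$.
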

%Note that the constant $C_\eta$ does not depend on $K$. Thus, by a standard scaling argument,  it suffices to prove the estimate when $R=1$, which we henceforth assume. 

\begin{remark}
Note that, unlike the Euclidean analogue \cite[Theorem 5.3]{HuSi09}, the constant $C_\eta$ does not depend on a bound for the maximal time (which is controlled by the minimum of $H$ at the initial time in \cite{HuSi09}), and the zeroth order term becomes negligible for large times. This is because, as in \cite[2.1 Theorem]{Hu87}, the coercive term in \eqref{eq:ineqn_evol} gives rise to an exponential decay term in the ``$L^2$-estimate'', which we are able to exploit. This observation is useful here since, unlike in the Euclidean setting, solutions satisfying the initial conditions (1)-(3) can exist for all time (e.g. stationary hyperequators).
\end{remark}

\begin{remark}\label{rem:cylindrical apps}
As a Corollary of the cylindrical estimate, we find that any \emph{minimal} hypersurface of $\mathbb{S}_K^{n+1}$, $n\ge 3$, satisfying \eqref{eq:strict quadratic pinching} must be totally geodesic. This is a special case of Simons' Theorem \cite{Si68}. More generally, any hypersurface $X_0:\M\to\mathbb S_{K}^{n+1}$ satisfying \eqref{eq:strict quadratic pinching} which flows for all time under mean curvature flow (e.g. if $X_0(\M)$ is embedded and divides the area of $\mathbb S_K^{n+1}$ in two) must converge to a hyperequator.

We also find that $\vert A\vert^2-\frac{1}{n-1}H^2\le 0$ for any uniformly quadratically pinched ancient solution $X:\M\times(-\infty,0)\to \mathbb{S}^{n+1}_K$ with uniformly bounded area and curvature as $t\to-\infty$. A theorem of Huisken and Sinestrari \cite[Theorem 6.1]{HuSi15} then implies that $X:\M\times(-\infty,0)\to \mathbb{S}^{n+1}_K$ is either a stationary hyperequator or a shrinking hyperparallel.
\end{remark}

\begin{corollary}\label{cor:rigidity ancient}
Let $X:\M\times(-\infty,\omega)\to \mathbb S_K^{n+1}$ be an ancient solution to mean curvature flow. If
\begin{itemize}
\item $\displaystyle\limsup_{t\to-\infty}\max_{\M\times\{t\}}(\vert A\vert^2-\tfrac{1}{n-2}H^2)<4K$,
\item $\displaystyle\limsup_{t\to-\infty}\mu_t(\M)<\infty$, and
\item $\displaystyle\limsup_{t\to-\infty}\max_{\M\times\{t\}}H^2<\infty$,
\end{itemize}
then $X:\M\times(-\infty,\omega)\to \mathbb S_K^{n+1}$ is either a stationary hyperequator or a shrinking hyperparallel.
\end{corollary}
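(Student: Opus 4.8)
The plan is to deduce Corollary~\ref{cor:rigidity ancient} from the cylindrical estimate (Theorem~\ref{thm:cylindrical estimate}) by a rescaling-in-time argument, exactly along the lines of Remark~\ref{rem:cylindrical apps}. First I would observe that the hypotheses are precisely the quantitative form of membership in the surgery class: the strict inequality $\limsup_{t\to-\infty}\max(\vert A\vert^2-\tfrac1{n-2}H^2)<4K$ means there is some $\alpha\in(0,1)$ (with $\alpha>\tfrac23$ if $n=3$) and some $t_0$ such that \eqref{eq:uniform quadratic pinching} holds at time $t_0$, and the area and curvature bounds give constants $V<\infty$, $\Theta<\infty$ with $X(\cdot,t_0)\in\mathcal{C}^n_K(\alpha,V,\Theta)$. (A minor point: the $\limsup$ controls the relevant quantities only for $t$ sufficiently negative, so one fixes such a $t_0$ and restricts attention to $[t_0,\omega)$; this loses nothing since the conclusion concerns the whole flow.)

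Next I would apply Theorem~\ref{thm:cylindrical estimate} to the solution restricted to $[t_0,\omega)$: for every small $\eta>0$ there is $C_\eta<\infty$ with
\begin{equation}\label{eq:ancient cyl}
\vert A\vert^2-\tfrac{1}{n-1}H^2\le \eta H^2+C_\eta K\,\mathrm{e}^{-2\delta K(t-t_0)}\quad\text{on }\M\times[t_0,\omega)\,.
\end{equation}
The essential point is that the constant $C_\eta$ depends only on $n,\alpha,V,\Theta,\eta$ and \emph{not} on $t_0$. Since the flow is ancient, I may now slide the base time: replacing $t_0$ by any $t_1\le t_0$ (legitimate because the pinching, area and curvature bounds persist backwards by the $\limsup$ hypotheses, so $X(\cdot,t_1)\in\mathcal{C}^n_K(\alpha,V,\Theta')$ with $\Theta'$ still controlled — in fact one can take a uniform $\Theta$ by enlarging it once) gives \eqref{eq:ancient cyl} with $t_0$ replaced by $t_1$ and the \emph{same} $C_\eta$. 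Fixing a point $(p,t)$ and letting $t_1\to-\infty$, the exponential term $C_\eta K\,\mathrm{e}^{-2\delta K(t-t_1)}$ tends to zero, so $\vert A\vert^2-\tfrac1{n-1}H^2\le\eta H^2$ at $(p,t)$; then letting $\eta\downarrow0$ yields
\[
\vert A\vert^2-\tfrac{1}{n-1}H^2\le 0\quad\text{on }\M\times(-\infty,\omega)\,.
\]

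Finally I would invoke the structure of such flows. By \eqref{eqn_conv}-type algebra (the decomposition recorded in the commented-out \S on convexity and quadratic bounds, or simply by noting that $\vert A\vert^2\le\tfrac1{n-1}H^2$ forces $\lambda_1\lambda_j\ge0$ for all $j$ at every point, hence the hypersurface is weakly convex), the flow is convex, and in fact the equality case $\vert A\vert^2=\tfrac1{n-1}H^2$ describes exactly the cylindrical $\mathbb S^{n-1}$-factor situation. The ancient, uniformly bounded (area and curvature) convex solution hypothesis is then exactly the setting of Huisken--Sinestrari \cite[Theorem~6.1]{HuSi15} (as cited in Remark~\ref{rem:cylindrical apps}), whose conclusion is that $X$ must be a stationary hyperequator or a shrinking hyperparallel; I would quote that theorem to finish. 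The main obstacle is the bookkeeping in the time-translation step: one must check carefully that the class $\mathcal{C}^n_K(\alpha,V,\Theta)$ can be arranged with time-independent parameters along the whole ancient flow (using the three $\limsup$ hypotheses to fix $\alpha$, and to bound $V$ and $\Theta$ uniformly for all sufficiently negative base times), so that Theorem~\ref{thm:cylindrical estimate} may be applied with a single constant $C_\eta$ before sending the base time to $-\infty$. Everything else is a direct limit and an application of the quoted rigidity theorem.
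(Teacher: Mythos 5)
Your proposal is correct and follows essentially the same route as the paper, which justifies the corollary via Remark \ref{rem:cylindrical apps}: use the three $\limsup$ hypotheses to place the flow in a class $\mathcal{C}^n_K(\alpha,V,\Theta)$ with parameters uniform over all sufficiently negative base times, apply the cylindrical estimate with its base-time-independent constant $C_\eta$ and exponentially decaying zeroth-order term, send the base time to $-\infty$ and then $\eta\downarrow 0$ to get $\vert A\vert^2-\tfrac{1}{n-1}H^2\le 0$, and conclude with \cite[Theorem 6.1]{HuSi15}. Your extra remarks on convexity are harmless but not needed for the final citation.
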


%By a scaling argument, it suffices to prove the estimate for fixed $R$ ($R=1$, say), which we henceforth assume. 

Motivated by \cite[2.1 Theorem]{Hu87} and \cite[Theorem 5.3]{HuSi09}, we will prove the estimate \eqref{eq:cylindrical estimate} by obtaining a bound for the function
\begin{align*}
f_{\sigma, \eta}:= \left[|A|^2-\left(\frac{1}{n-1}+\eta\right)H^2\right]W^{\sigma-1}
\end{align*}
for some $\sigma\in(0,1)$ and any $\eta\in \left(0,\frac{(2-\alpha)(n-\alpha)}{2n(n-1)(n-2+\alpha)}\right)$, where, setting
\[
a:=\frac{1}{n-2+\alpha}-\frac{1}{n-1}-\eta+\frac{\alpha}{2n(n-1)}\;\;\text{and}\;\; b:=2(2-\alpha)\,,
\]
the function $W$ is defined by
\[
W:=aH^2+bK\,.
\]
Observe that $W>0$ and $f_{\sigma,\eta}\leq W^\sigma$. The final term in the constant $a$ is chosen in order to obtain the good gradient and reaction terms in the following lemma. %This follows since 
%\begin{align*}
% |A| ^2 &- \left(\frac{1}{n-1} + \eta \right)  |H| ^2 - a H^2 -bK \\
% & =|A|^2 - \left( \alpha_n - \left( \frac{1}{ n-1} + \eta \right) + \frac{ \alpha}{ 2 n (n-1) } - \left(\frac{1}{n-1} + \eta \right)\right)H^2 - bK\\
% &=|A|^2 - \left ( \alpha_n+ \frac{ \alpha}{ 2 n (n-1) } \right)H^2- \beta_n K \\
% &\leq \alpha_n H^2 + \beta _n K - \left ( \alpha_n+ \frac{ \alpha}{ 2 n (n-1) } \right)H^2 - \beta_n K \\
% &= -  \frac{ \alpha}{ 2 n (n-1) } H^2  \leq  0.
%\end{align*}
%Hence $ f _0 \leq 1$.

%We will need an evolution equation for $f_{\sigma,\eta}$. 
\begin{lemma}\label{lem:evol_fsigma}
There exists $\delta=\delta(n,\alpha)>0$ such that
\begin{align}\label{eq:ineqn_evol}
(\partial_t-\Delta)f_{\sigma,\eta}\leq{}&2\sigma(|A|^2+nK)f_{\sigma,\eta}-4\delta Kf_{\sigma,\eta}\nonumber\\
{}&-2\delta f_{\sigma,\eta}\frac{|\nabla A|^2}{W}+2(1-\sigma)\left\langle \nabla f_{\sigma,\eta}, \frac{\nabla W}{W}\right\rangle
\end{align}
wherever $f_{\sigma,\eta}>0$.
%:=1-\tfrac{n+2}{3}\left(\tfrac{1}{n-2+\alpha}+\tfrac{\alpha}{2n(n-1)}\right)>0$ for $\alpha\leq \alpha(n)$ sufficiently small.
\end{lemma}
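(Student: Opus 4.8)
The plan is to compute the evolution of $f_{\sigma,\eta} = h\, W^{\sigma-1}$, where $h := |A|^2 - (\tfrac{1}{n-1}+\eta)H^2$, by combining the evolution equations for $h$ and for $W = aH^2 + bK$. The factor $W^{\sigma-1}$ will contribute a term involving $\nabla h \cdot \nabla W$ and one involving $h\,\Delta W$ (or rather $h\,|\nabla W|^2/W$), so the first task is to organize these cross-terms. Using the product rule for the heat operator, $(\partial_t - \Delta)(hW^{\sigma-1}) = W^{\sigma-1}(\partial_t-\Delta)h + h(\partial_t-\Delta)W^{\sigma-1} - 2\langle \nabla h, \nabla W^{\sigma-1}\rangle$, and $(\partial_t-\Delta)W^{\sigma-1} = (\sigma-1)W^{\sigma-2}(\partial_t-\Delta)W - (\sigma-1)(\sigma-2)W^{\sigma-3}|\nabla W|^2$.

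First I would record the evolution equations for the ingredients. From \eqref{evol_Asquare}, \eqref{evol_Hsquare} one gets
\[
(\partial_t-\Delta)h = -2\big(|\nabla A|^2 - (\tfrac{1}{n-1}+\eta)|\nabla H|^2\big) + 2(|A|^2+nK)h - 4nK\big(|A|^2-\tfrac1n H^2\big) + 2(\tfrac1{n-1}+\eta)\cdot(2-n)nK\cdot(\text{something}),
\]
though more cleanly, mimicking the computation in Section \ref{ssec:quadratic pinching}: the zeroth-order reaction terms produced by the ambient curvature should be controllable, and crucially the gradient term $-2(|\nabla A|^2 - (\tfrac1{n-1}+\eta)|\nabla H|^2)$ is strictly negative and of order $|\nabla A|^2$ — this is where the Kato inequality \eqref{ineqn_Kato} and the fact that $\tfrac1{n-1}+\eta < \tfrac{3}{n+2}$ for $\eta$ small enter, giving us a term $\le -\delta_0|\nabla A|^2$ to spend. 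Similarly, $(\partial_t-\Delta)W = 2a(-|\nabla H|^2 + (|A|^2+nK)H^2) = -2a|\nabla H|^2 + 2(|A|^2+nK)(W-bK)$, which after rearrangement yields $(\partial_t-\Delta)W = 2(|A|^2+nK)W - 2a|\nabla H|^2 - 2bK(|A|^2+nK)$.

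Then I would substitute. The term $h(\sigma-1)W^{\sigma-2}\cdot 2(|A|^2+nK)W = 2(\sigma-1)(|A|^2+nK)f_{\sigma,\eta}$ combines with the $W^{\sigma-1}\cdot 2(|A|^2+nK)h = 2(|A|^2+nK)f_{\sigma,\eta}$ coming from $h$'s equation to give exactly $2\sigma(|A|^2+nK)f_{\sigma,\eta}$, the first term on the right of \eqref{eq:ineqn_evol}. The cross term $-2\langle\nabla h,\nabla W^{\sigma-1}\rangle = -2(\sigma-1)W^{-1}\langle\nabla h, \nabla W\rangle W^{\sigma-1}$; writing $\nabla f_{\sigma,\eta} = W^{\sigma-1}\nabla h + (\sigma-1)hW^{\sigma-2}\nabla W$, one rearranges so that $-2\langle\nabla h,\nabla W^{\sigma-1}\rangle$ together with part of the $|\nabla W|^2$ terms reproduces $+2(1-\sigma)\langle\nabla f_{\sigma,\eta},\nabla W/W\rangle$ plus a leftover multiple of $h W^{\sigma-3}|\nabla W|^2$ of a definite sign (this is the standard "Huisken trick" manipulation, as in \cite[Lemma 5.5]{Hu84} or \cite{Hu87}). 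The leftover $|\nabla W|^2$ and $|\nabla H|^2$ gradient terms must then be absorbed into the good negative term $-\delta_0 f_{\sigma,\eta}|\nabla A|^2/W$ coming from $h$'s gradient term; here one uses $|\nabla W|^2 \le C a^2 H^2|\nabla H|^2 \le C H^2 W |\nabla H|^2$ and $|\nabla H|^2 \le \tfrac{n+2}{3}|\nabla A|^2 \le C|\nabla A|^2$, plus $f_{\sigma,\eta}\le W^\sigma$, together with the Cauchy–Schwarz estimate $2(1-\sigma)|\langle\nabla f,\nabla W/W\rangle| $ — but note that term is kept on the right, so actually it is only the uncontrolled leftover gradient terms that need absorbing. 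Finally, the zeroth-order reaction: the ambient-curvature terms $-4nK(|A|^2-\tfrac1n H^2)$ from $h$ and $+2bK(|A|^2+nK)$ from $W$ must combine to produce a term $\le -4\delta K f_{\sigma,\eta}$; this is the same algebra as in the proof in Section \ref{ssec:quadratic pinching} where the pinching $|A|^2 \le \tfrac1{n-2+\alpha}H^2 + b K$ is invoked to turn $-2K(2n-b)(|A|^2 - a_n H^2 - bK)$-type expressions into something proportional to $-K\cdot(\text{pinching quantity})\ge$ a multiple of $-K h$ — here one needs $\eta$ small enough relative to $\alpha$, which is why $\eta_0$ depends on $n$ and $\alpha$.

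The main obstacle I expect is the bookkeeping in the zeroth-order reaction term: showing that the $K$-terms genuinely combine to give a strictly negative coefficient $-4\delta K$ times $f_{\sigma,\eta}$ (as opposed to merely a bounded term), uniformly for all $\sigma$ near $1$ and $\eta$ near $0$. This requires exploiting the strict pinching $g_{2,\alpha}\le 0$ to bound $|A|^2$ from above in terms of $H^2$ and $K$, and simultaneously using $f_{\sigma,\eta}>0$ (i.e. $h>0$, so $|A|^2$ is bounded \emph{below} by $(\tfrac1{n-1}+\eta)H^2$) to control the sign — the interval of admissible $\eta$, namely $\eta<\tfrac{(2-\alpha)(n-\alpha)}{2n(n-1)(n-2+\alpha)}$, is presumably exactly what makes this work. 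The gradient-term absorption is more routine but still needs care to ensure the constant in front of $|\nabla A|^2/W$ stays positive after subtracting off the $|\nabla W|^2$ contributions; choosing $\delta$ small at the end handles both.
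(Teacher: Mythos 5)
Your plan is essentially the paper's own proof: the same product-rule decomposition producing the exact cancellation $2\sigma(|A|^2+nK)f_{\sigma,\eta}$ and the gradient term $2(1-\sigma)\langle\nabla f_{\sigma,\eta},\nabla W/W\rangle$, the Kato inequality together with $\tfrac{1}{n-1}+\eta+a<\tfrac{3}{n+2}$ to keep a good $-|\nabla A|^2/W$ term (the net $|\nabla W|^2$ contribution is in fact $-\sigma(1-\sigma)f_{\sigma,\eta}|\nabla W|^2/W^2\le 0$ and is simply discarded, so only the $2a|\nabla H|^2$ piece from $(\partial_t-\Delta)W$ needs absorbing), and the pinching inequality plus the identity $na=(2-\alpha)\bigl(\tfrac{1}{n-2+\alpha}-\tfrac{1}{2(n-1)}\bigr)-n\eta$ to make the $K$-reaction term strictly negative. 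This matches the paper's argument step for step.
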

\begin{proof}
Set $f_\eta:=|A|^2-(\tfrac{1}{n-1}+\eta)H^2$. Basic manipulations (independent of the precise form of $f_\eta$ and $W$) yield
\begin{align*}
(\partial_t-\Delta)f_{\sigma,\eta}={}&W^{\sigma-1}(\partial_t-\Delta)f_\eta-(1-\sigma)f_{\sigma,\eta}W^{-1}(\partial_t-\Delta)W\\
{}&+2(1-\sigma)\left\langle\nabla f_{\sigma,\eta},\frac{\nabla W}{W}\right\rangle-\sigma(1-\sigma)f_{\sigma,\eta}\frac{|\nabla W|^2}{W^2}\,.
\end{align*}
The final term will be discarded.

Applying \eqref{evol_Asquare} and \eqref{evol_Hsquare}, we compute
\begin{align*}
(\partial_t-\Delta)W=2\left(|A|^2+nK\right)(W-bK)-2a|\nabla H|^2
\end{align*}
and
\begin{align}\label{eq:evolve_f_eta}
(\partial_t-\Delta)f_\eta={}&2\left(|A|^2+nK\right)f_\eta-4nK\left(|A|^2-\tfrac{1}{n}H^2\right)\nonumber\\
{}&-2\left(|\nabla A|^2-\left(\tfrac{1}{n-1}+\eta\right)|\nabla H|^2\right)\,.%\\
%\leq{}&2\left(|A|^2+nK\right)f_\eta-2\gamma|\nabla A|^2\,.
\end{align}

Combining the preceding three identities and estimating $f_{\sigma,\eta}\leq W^{\sigma}$, $|\nabla H|^2\leq \frac{n+2}{3}|\nabla A|^2$ and $\sigma<1$ yields
\begin{align*}
(\partial_t-\Delta)f_{\sigma,\eta}\leq{}&2\sigma(|A|^2+nK)f_{\sigma,\eta}+2(1-\sigma)\left\langle\nabla f_{\sigma,\eta},\frac{\nabla W}{W}\right\rangle\\
{}&+2W^{\sigma-1}\left(bK(|A|^2+nK)\frac{f_\eta}{W}-2nK(|A|^2-\tfrac{1}{n}H^2)\right)\\
{}&-2f_{\sigma,\eta}\left(1-\frac{n+2}{3}\left[\frac{1}{n-1}+\eta+a\right]\right)\frac{|\nabla A|^2}{W}\,.
%-4K\frac{f_{\sigma,\eta}}{W}\left((n-2+\alpha)|A|^2-H^2-n(2-\alpha)K)\right)\,.
\end{align*}

\begin{comment}
Since $n\geq 4$, we can find $\alpha_0(n)>0$ such that
\[
\frac{1}{n-2+\alpha}+\frac{\alpha}{2n(n-1)}+\frac{3\alpha}{n+2}\leq\frac{3}{n+2}
\]
for $\alpha\leq \alpha_0(n)$ and hence
\[
\frac{1}{n-1}+\eta+a=\frac{1}{n-2+\alpha}+\frac{\alpha}{2n(n-1)}\leq\frac{3}{n+2}(1-\alpha)\,.
\]
\end{comment}

Consider the term
\begin{align*}
Z:={}&(2-\alpha)\left(|A|^2+nK\right)\frac{f_\eta}{W}+H^2-n|A|^2\\
\leq{}&(2-\alpha)\left(\frac{1}{n-2+\alpha}H^2+bK+nK\right)\frac{f_\eta}{W}+H^2-n|A|^2\,.
\end{align*}
Noting that
\[
na=(2-\alpha)\left(\frac{1}{n-2+\alpha}-\frac{1}{2(n-1)}\right)-n\eta\,,
\]
and estimating $f_\eta\le W$, we find
\begin{align*}
Z\leq{}&%\left(naH^2+\left[\frac{2-\alpha}{2(n-1)}+n\eta\right]\!H^2+\left[2-\alpha+\frac{n}{2}\right]\!bK\!\right)\!\frac{f_\eta}{W}+H^2-n|A|^2\\
%={}&
nf_\eta+\left(\left[\frac{2-\alpha}{2(n-1)}+n\eta\right]H^2+\left[2-\alpha-\frac{n}{2}\right]bK\right)\frac{f_\eta}{W}+H^2-n|A|^2\\
\le{}&-\left(\frac{1}{2(n-1)}H^2+\left[\alpha+\frac{n}{2}-2\right]bK\right)\frac{f_\eta}{W}\,.
\end{align*}
%Since $n\geq 4$, we may discard the term $(2-n/2)b f_\eta/W$. Estimating $f_\eta\leq W$ then yields
%\begin{align*}
%Z\leq{}&%\left(1-n\eta-\frac{n}{n-1}\right)H^2+\left(\frac{2}{2(n-1)}+n\eta\right)H^2+\left(2-\frac{n}{2}\right)b
%-\alpha\left(\frac{1}{2(n-1)}H^2+bK\right)\frac{f_\eta}{W}\leq -\alpha f_\eta\,.
%\end{align*}
Now set
\bann
\delta:={}&\min\left\{1-\frac{n+2}{3}\left[\frac{1}{n-2+\alpha}-\frac{\alpha}{2n(n-1)}\right],\frac{a^{-1}}{2(n-1)},\alpha+\frac{n}{2}-2\right\}\\
>{}&0\,.\qedhere
\eann

%observe that, when $n\geq 4$ and $f_{\sigma,\eta}\geq 0$,
%\begin{align*}
%|A|^2-\frac{1}{n-2+\alpha}H^2-\frac{n(2-\alpha)}{n-2+\alpha}K={}& |A|^2-\left(\frac{1}{n-1}+\eta\right)H^2-aH^2-\frac{n(2-\alpha)}{n-2+\alpha}K\\
%\geq{}& -W+(2-\alpha)K\left(2-\frac{n}{n-2+\alpha}\right)\geq-W\,.
%\end{align*}
\end{proof}
%Thus, if $\sigma\leq\frac{\alpha\gamma}{2n}$, then
%\begin{align*}
%(\partial_t-\Delta)f_{\sigma,\eta}\leq{}&2\sigma|A|^2f_{\sigma,\eta}-\alpha \gamma Kf_{\sigma,\eta}-2\gamma f_{\sigma,\eta}\frac{|\nabla A|^2}{W}+2(1-\sigma)\left\langle \nabla f_{\sigma,\eta}, \frac{\nabla W}{W}\right\rangle\,.
%\end{align*}

We wish to bound $\mathrm{e}^{2\delta Kt}f_{\sigma,\eta}$ from above. It will suffice to consider points where $f_{\sigma, \eta}>0$. To that end, we consider the function
\[
f_+%:=(\mathrm{e}^{\gamma Kt}f _{\sigma, \eta})_{+}
:=\max\{\mathrm{e}^{2\delta Kt}f_{\sigma,\eta},0 \}.
\]

\begin{lemma}
There exist constants $\ell=\ell(n,\alpha,\eta)<\infty$ and $C=C(n,K,\alpha,V,\Theta,\sigma,p)$ such that
\begin{align}\label{eq:Lpest}
\int f_+^p\,d\mu\leq C\vts\mathrm{e}^{-\delta pKt}
\end{align}
so long as $\sigma\leq \ell p^{-\frac{1}{2}}$ and $p>\ell^{-1}$.
\end{lemma}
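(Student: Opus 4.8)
The goal is an $L^p$-decay estimate for $f_+ = \max\{\mathrm{e}^{2\delta Kt}f_{\sigma,\eta}, 0\}$. The natural strategy, following Huisken \cite{Hu87} and Huisken--Sinestrari \cite{HuSi09}, is a Moser/Stampacchia-type energy estimate: multiply the evolution inequality \eqref{eq:ineqn_evol} for $f_{\sigma,\eta}$ by $p\,f_+^{p-1}\mathrm{e}^{2\delta(p-1)Kt}$ (appropriately interpreted, with the understanding that $f_+$ is a Lipschitz function vanishing where $f_{\sigma,\eta}\le 0$), integrate over $\mathcal{M}$, and integrate the Laplacian term by parts. I would first rewrite \eqref{eq:ineqn_evol} in terms of $f_+$ itself: since $(\partial_t-\Delta)(\mathrm{e}^{2\delta Kt}f_{\sigma,\eta}) = \mathrm{e}^{2\delta Kt}[(\partial_t-\Delta)f_{\sigma,\eta} + 2\delta K f_{\sigma,\eta}]$, the $-4\delta K f_{\sigma,\eta}$ term in \eqref{eq:ineqn_evol} leaves a residual $-2\delta K f_+$, which is exactly the coercive term that will produce the exponential factor $\mathrm{e}^{-\delta p K t}$ on the right-hand side.

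**Key steps in order.** (i) Test the evolution inequality against $f_+^{p-1}$ and integrate, using $\frac{d}{dt}\int f_+^p\,d\mu = \int p f_+^{p-1}\partial_t f_+\,d\mu + \int f_+^p\,\partial_t(d\mu)$ and recalling $\partial_t(d\mu) = -H^2\,d\mu$, which is a favorable sign since $f_+\ge 0$ (so this term can be dropped). (ii) Integrate the $\int p f_+^{p-1}\Delta f_+\,d\mu$ term by parts to get $-p(p-1)\int f_+^{p-2}|\nabla f_+|^2\,d\mu$, a good negative term. The gradient term $2(1-\sigma)\langle\nabla f_{\sigma,\eta},\nabla W/W\rangle$ on the right of \eqref{eq:ineqn_evol}, after multiplying by $f_+^{p-1}$ and integrating by parts, produces a term like $p(1-\sigma)\int f_+^{p-1}\langle\nabla f_+,\nabla\log W\rangle$ plus lower-order pieces; this is absorbed by the good gradient terms (the $-p(p-1)\int f_+^{p-2}|\nabla f_+|^2$ from the Laplacian together with the $-2\delta f_+ |\nabla A|^2/W$ reaction gradient term) via Young's inequality, exactly as in \cite[proof of 5.4 Lemma]{Hu87} — this is where the smallness hypothesis $\sigma \le \ell p^{-1/2}$ enters, since one needs $\sigma^2 p$ small to win against the $|\nabla\log W|^2 \lesssim |\nabla H|^2/H^2 \lesssim |\nabla A|^2/W$ cross-term (note $|\nabla W|^2/W^2 \le C|\nabla A|^2/W$ when $W\gtrsim H^2 + K$). (iii) The reaction term $2\sigma(|A|^2+nK)f_+$ times $p f_+^{p-1}$ gives $2\sigma p\int (|A|^2+nK)f_+^p\,d\mu$; this bad term is controlled using $f_+ \le \mathrm{e}^{2\delta Kt}W^\sigma$ and the Sobolev/Michael--Simon inequality combined with the good $-\int f_+^{p-2}|\nabla f_+|^2$ term, again absorbable when $\sigma\sqrt{p}$ is small; alternatively one estimates $|A|^2 \lesssim H^2 + K \lesssim W$ and notes $W^\sigma f_+^{p-1}$-type quantities can be reabsorbed. (iv) After absorption, one is left with $\frac{d}{dt}\int f_+^p\,d\mu \le -2\delta p K\int f_+^p\,d\mu + (\text{small}) + C\int(\cdots)$, and Grönwall's inequality, together with the bound on $\int f_+^p\,d\mu$ at $t=0$ coming from conditions (1)--(3) of the class $\mathcal{C}^n_K(\alpha,V,\Theta)$ (which bound the initial value of $f_{\sigma,\eta}$ pointwise and the initial area), yields \eqref{eq:Lpest}.

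**Main obstacle.** The delicate point is the absorption of the two bad positive terms — the reaction term $2\sigma p\int (|A|^2+nK)f_+^p$ and the gradient cross-term from $2(1-\sigma)\langle\nabla f_{\sigma,\eta},\nabla W/W\rangle$ — into the single good negative gradient term $-p(p-1)\int f_+^{p-2}|\nabla f_+|^2$ together with the reaction-gradient term $-2\delta f_+|\nabla A|^2/W$. The scaling constraint $\sigma \le \ell p^{-1/2}$ is precisely what makes this work: the cross-term scales like $\sigma p \cdot (\text{gradient})$ while the good term scales like $p^2 \cdot (\text{gradient})^2$-weighted, so one needs $\sigma p/p^2 = \sigma/p \lesssim 1$ for the linear absorption but $\sigma^2 p \lesssim 1$ for the quadratic (Cauchy--Schwarz) absorption of the $|\nabla W|^2/W^2$ piece — the latter being the binding constraint. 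I would set up the Young's inequality splittings carefully to track exactly which power of $\sigma$ and $p$ appears, mirroring the bookkeeping in \cite[pp. 478--480]{Hu87} and \cite[\S 5]{HuSi09}, and confirm that the exponential decay rate $\delta$ survives untouched (it should, since the coercive term $-2\delta K f_+$ is never touched in the absorption).
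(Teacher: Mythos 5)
Your overall framework (testing the evolution inequality \eqref{eq:ineqn_evol} against $f_+^{p-1}$, dropping the $-\int f_+^pH^2\,d\mu$ term, integrating the Laplacian by parts, absorbing the cross term $2(1-\sigma)\langle\nabla f_{\sigma,\eta},\nabla W/W\rangle$ via Young's inequality under the constraint $\sigma\le\ell p^{-1/2}$, and closing with Gr\"onwall) matches the paper's argument. But your step (iii), the treatment of the reaction term $2\sigma p\int f_+^p(|A|^2+nK)\,d\mu$, contains a genuine gap, and it is precisely the step the whole lemma hinges on. You propose to control it either by ``the Sobolev/Michael--Simon inequality combined with the good gradient term'' or by noting $|A|^2\lesssim W$ and claiming the resulting quantity ``can be reabsorbed.'' Neither route works. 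The Michael--Simon/Hoffman--Spruck inequality (Theorem \ref{thm:Sobolev}) has the term $\int u^2H^2\,d\mu$ on its \emph{right-hand} side, so it gives no upper bound on $\int f_+^p|A|^2\,d\mu$ in terms of gradient quantities; moreover, the version used in this paper requires the support-measure smallness $\mu(\operatorname{spt}u)\le\frac{\omega_n}{n+1}K^{-n/2}$, which is not available at the $L^p$-estimate stage (it only becomes available later in the Stampacchia iteration, for superlevel sets above a large level $k_0$, using the very estimate \eqref{eq:Lpest} you are trying to prove --- so invoking it here is circular). The fallback $|A|^2\lesssim W$ does not help either: $\int f_+^pW\,d\mu$ carries no smallness and cannot be absorbed by the negative gradient terms or the coercive $-2\delta pK\int f_+^p\,d\mu$ term by any pointwise manipulation, since $\sigma p\sim\ell p^{1/2}$ is large.

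What is actually needed --- and what the paper uses --- is the Poincar\'e-type inequality of Proposition \ref{prop:Poincare}, which is derived from Simons' identity and is valid only for functions supported in the acylindrical set $U_{\alpha,\eta}$. One first observes that $\operatorname{spt}f_+\subset U_{\alpha,\eta}$ (positivity of $f_{\sigma,\eta}$ forces $f_{1,\eta}\ge0$, while the preserved quadratic pinching \eqref{eq:uniform quadratic pinching} gives $g_{2,\alpha}\le0$), and then applies the proposition with $u^2=f_+^p$ and $r=p^{1/2}$ to convert $2\sigma p\int f_+^p|A|^2\,d\mu\le C\sigma p\int f_+^pW\,d\mu$ into $C\sigma p^{5/2}\int f_+^{p-2}|\nabla f_{\sigma,\eta}|^2\,d\mu/p+C\sigma p^{3/2}\int f_+^p|\nabla A|^2W^{-1}\,d\mu+C\sigma pK\int f_+^p\,d\mu$, each of which is absorbed by the corresponding good term exactly when $\sigma\le\ell p^{-1/2}$ and $p\ge\ell^{-1}$. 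Your proposal never verifies the support condition, never invokes Simons' identity, and offers no substitute mechanism, so the decisive absorption of the reaction term is unjustified; with that step replaced by Proposition \ref{prop:Poincare} as above, the rest of your outline goes through as in the paper.
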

\begin{proof}
Applying \eqref{eq:ineqn_evol} yields
\begin{align}\label{eq:basicesimate}
\frac{d}{dt}\int f_+^p\,d\mu={}&p\int f_+^{p-1}\partial_tf_{\sigma,\eta}\,d\mu-\int f_+^pH^2\,d\mu\nonumber\\
\leq{}&-p(p-1)\int f_+^{p-2}|\nabla f_{\sigma,\eta}|^2\,d\mu-2\delta p\int f_+^p\frac{|\nabla A|^2}{W}\,d\mu\nonumber\\
{}&+2\sigma p\int f_+^p|A|^2\,d\mu-2\delta pK\int f_+^p\,d\mu\nonumber\\
{}&+2p\int f_+^p\frac{|\nabla f_{\sigma,\eta}|}{f_{\sigma,\eta}}\frac{|\nabla W|}{W}\,d\mu\nonumber\\
\leq{}&-p(p-p^{\frac{1}{2}}-1)\int f_+^{p-2}|\nabla f_{\sigma,\eta}|^2\,d\mu\nonumber\\
{}&-(2\delta p-4anp^{\frac{1}{2}})\int f_+^p\frac{|\nabla A|^2}{W}\,d\mu\nonumber\\
{}&+2\sigma p\int f_+^p|A|^2\,d\mu-2\delta pK\int f_+^p\,d\mu\,.
\end{align}
%where $c:=\sigma n+2(n-2+\alpha)$ and we applied Young's inequality in the final line.

To estimate the penultimate term, we apply Proposition \ref{prop:Poincare}. Setting $u^2=f_+^p$ and $r=p^{\frac{1}{2}}$, this yields
\begin{align*}
2\int f_+^p|A|^2\,d\mu\leq{}& C\int f_+^pW\,d\mu\\
\leq{}& C\int f_+^p\left(p^{\frac{3}{2}}\frac{|\nabla f_{\sigma,\eta}|}{f_{\sigma,\eta}^2}+p^{\frac{1}{2}}\frac{|\nabla A|^2}{W}+K\right)\,d\mu\,,
\end{align*}
where $C$ depends on $n$, $\alpha$ and $\eta$, and we thereby arrive at
\begin{align*}
\frac{d}{dt}\int f_+^p\,d\mu\leq{}&-p(p-C\sigma p^{\frac{3}{2}}-p^{\frac{1}{2}}-1)\int f_+^{p-2}|\nabla f_{\sigma,\eta}|^2\,d\mu\\
{}&-(2\delta p-C\sigma p^{\frac{3}{2}}-4anp^{\frac{1}{2}})\int f_+^p\frac{|\nabla A|^2}{W}\,d\mu\\
{}&-(2\delta-C\sigma)pK\int f_+^p\,d\mu\,.
\end{align*}
Choosing $\sigma\leq \ell p^{-\frac{1}{2}}$ and $p\ge \ell^{-1}$, for $\ell=\ell(n,\alpha,\eta)$ sufficiently small, we can arrange that
\begin{align*}
\frac{d}{dt}\log\left(\int f_+^p\,d\mu\right)\leq{}&-\delta pK
\end{align*}
and hence
\begin{align}\label{eq:Lpest}
\int f_+^p\,d\mu\leq \mathrm{e}^{-\delta pKt}\int f_+^p(\cdot,0)d\mu_0\leq C\mathrm{e}^{-\delta pKt}\,,
\end{align}
where $C=C(n,K,\alpha,V,\Theta,\sigma,p)$.
\end{proof}

%Let us state this as a Lemma
%\begin{lemma}
%Let $X:\mc M^n\times [0,T)\to \mathbb{S}^{n+1}_K$ be a solution of mean curvature flow satisfying ... For any $\eta>0$ there is a constant $C=C(n,\alpha,|\mc M^n_0|,T,\eta,\sigma,p)$ such that
%\begin{align*}
%\int f_+^p\leq C(n,\alpha,\eta, |\mathcal{M}_0^n|,T,\sigma,p)
%\end{align*}
%whenever...
%\end{lemma}

This $L^2$-estimate (for $v:=f_+^{\frac{p}{2}}$) can be bootstraped to an $L^\infty$-estimate using Stampacchia iteration.

\begin{proof}[Proof of Theorem \ref{thm:cylindrical estimate}]
Given $k\geq 0$, consider
\begin{align*}
v_k^2:=\left(\mathrm{e}^{2\delta Kt}f_{\sigma,\eta}-k\right)^p_+\quad\text{and}\quad V_{k}(t):=\{x\in \mc M^n:v_k(x,t)>0\}
\end{align*}
and set
\begin{align*}
u(k):=\int_0^T\!\!\!\int v_k^2\,d\mu\,dt\quad\text{and}\quad U(k):=\int_0^T\!\!\!\int_{V_{k}}d\mu\,dt\,.
\end{align*}
Note that, for any $h>k>0$,
%\bann
%a(h,\rho)%=\int_{-T}^0\hspace{-1mm}\int_{A_{h,\rho}}\frac{(\Ges-k)_+^p}{(\Ges-k)_+^p}
%\leq\int_{-T}^0\hspace{-1mm}\int_{A_{h,\rho}}\frac{(\Ges-k)_+^p}{(h-k)^p}\,d\mu\,dt
%\eann
%so that
\begin{align}\label{eq:iteration1}
(h-k)^pU(h)\leq u(k)\,.
\end{align}

So we need an estimate for $u(k)$. Computing as in \eqref{eq:basicesimate}, we can estimate
\begin{align}\label{eq:evolvevk}
\frac{d}{dt}\int v_k^2\,d\mu+\int_{V_{k}}|\nabla v_k|^2\,d\mu+\int_{V_{k}}H^2v_k^2\,d\mu\leq{}&2\sigma p\int_{V_{k}}f_+^p(|A|^2+nK)\,d\mu\nonumber\\
\leq{}&c_n\sigma p\int_{V_{k}}f_+^pW\,d\mu
\end{align}
for any $k>0$ if $p>2$, where $c_n$ depends only on $n$.

We shall exploit the good gradient term using the Sobolev inequality (Theorem \ref{thm:Sobolev}). Indeed, since \eqref{eq:Lpest} implies that
\[
|V_k|\leq k^{-p}\int f_{+}^p\,d\mu\leq Ck^{-p}\,,
\]
we can apply \eqref{eq:Sobolev} to obtain
\[
\frac{1}{c_n}\left(\int v_k^{2^\ast}d\mu\right)^{\frac{1}{2^\ast}}\leq \int\big(|\nabla v_k|^2+H^2v_k^2\big)\,d\mu%+\left(\int_{V_k} |H|^nd\mu\right)^{\frac{2}{n}}\left(\int v_k^{2^\ast}d\mu\right)^{\frac{2}{2^\ast}}\,.
\]
so long as
\[
k>k_0:=\left(\frac{(n+1)C}{\omega_n}K^{\frac{n}{2}}\right)^p.
\]
\begin{comment}
Setting $\sigma':=\sigma+\frac{n}{2p}$, we can estimate, using \eqref{eq:Lpest} (choosing $\ell$ slightly smaller if necessary), 
\begin{align*}
\int_{V_{k}}|H|^n\,d\mu\leq{}& k^{-p}\int_{V_{k}}|H|^nf_+^p\,d\mu\\
\leq{}& k^{-p}a^{-\frac{n}{2}}\int_{V_{k}}W^{\frac{n}{2}}f_+^p\,d\mu\\
={}&k^{-p}a^{-\frac{n}{2}}\int_{V_k}(f_{\sigma',\eta})_+^p\,d\mu \\
\leq{}&Ck^{-p},
\end{align*}
where $C=C(n,K,V,\Theta,\sigma,p)$. Thus, choosing $k_0$ larger if necessary, we can arrange that
\begin{equation}\label{eq:sobolev}
\left(\int v_k^{2^\ast}d\mu\right)^{\frac{1}{2^\ast}}\leq c_n\int |\nabla v_k|^2d\mu
\end{equation}
when $k>k_0$.
\end{comment}
%\[
%f^p_{\sigma',\eta}=f_{\eta}^pW^{\sigma p-p+\frac{n}{2}}=f^p_{\sigma,\eta} W^\frac{n}{2}\geq a^{\frac{n}{2}}f^p_{\sigma,\eta}H^n
%\]
%
%
Recalling \eqref{eq:evolvevk}, we arrive at
\begin{equation}\label{eq:v_k monotonicity}
\frac{d}{dt}\int v_k^2\,d\mu+\left(\int v_k^{2^\ast}d\mu\right)^{\frac{1}{2^\ast}}\leq c_n\sigma p\int_{V_{k}}f_+^pW\,d\mu
\end{equation}
for $k>k_0$. Assuming that $k_0>\sup_{\mathcal M\times\{0\}}f_{\sigma,\eta}$, integration then yields
\begin{equation}\label{eq:still holds with surgeries}
\sup_{[0,T)}\int v_k^2\,d\mu+\int_0^T\hspace{-2mm}\left(\int v_k^{2^\ast}d\mu\right)^{\frac{1}{2^\ast}}\leq c_n\sigma p\int_0^T\hspace{-2mm}\int_{V_{k}}f_+^pW\,d\mu
\end{equation}
for $k>k_0$.

Using the interpolation inequality, we can estimate
\begin{align*}
\int v_k^{\frac{2(n+2)}{n}}d\mu\leq \left(\int v_k^2\,d\mu\right)^{\frac{2}{n}}\left(\int v_k^{2^\ast}\,d\mu\right)^{\frac{2}{2^\ast}}
\end{align*}
so that, by Young's inequality,
\begin{align}\label{eq:uest1}
\left(\int_{0}^T\hspace{-2mm}\int v_k^{\frac{2(n+2)}{n}}d\mu\,dt\right)^{\frac{n}{n+2}}\!\!\leq{}& \left(\sup_{t\in [0,T)}\int v_k^2\,d\mu\right)^{\frac{2}{n+2}}\!\!\left(\int_{0}^T\!\!\left(\int v_k^{2^\ast}d\mu\right)^{\frac{2}{2^\ast}}\!\!dt\right)^{\frac{n}{n+2}}\nonumber\\
\leq{}&\tfrac{2}{n+2}\sup_{t\in [0,T)}\int v_k^2\,d\mu+\tfrac{n}{n+2}\int_{0}^T\!\!\left(\int v_k^{2^\ast}d\mu\right)^{\frac{2}{2^\ast}}\!dt\nonumber\\
\leq{}& c_n\sigma p\int_0^T\hspace{-2mm}\int_{V_{k}}f_+^pW\,d\mu\,.
\end{align}

Set $\sigma':=\sigma+\frac{1}{p}\lesssim p^{-\frac{1}{2}}$. Given $r>1$ (to be determined momentarily), we may choose $\ell$ slightly smaller if necessary, depending now also on $r$, so that H\"older's inequality and the $L^2$-estimate \eqref{eq:Lpest} yield
\begin{align}\label{eq:uest2}
\int_0^T\hspace{-2mm}\int_{V_k}f_+^pW\,d\mu\,dt\leq{}&U(k)^{1-\frac{1}{r}}\left(\int_{0}^T\hspace{-2mm}\int_{V_k}f_+^{pr}W^{r}\,d\mu\,dt\right)^{\frac{1}{r}}\nonumber\\
={}& U(k)^{1-\frac{1}{r}}\left(\int_0^T\hspace{-2mm}\int_{V_k}(f_{\sigma',\eta})_{+}^{pr}\,d\mu\,dt\right)^{\frac{1}{r}}\nonumber\\
\le{}& U(k)^{1-\frac{1}{r}}\left(C\int_0^T\mathrm{e}^{-\delta prKt}\,dt\right)^{\frac{1}{r}}\nonumber\\
={}& \left[\frac{C}{\delta prK}\left(1-\mathrm{e}^{-\delta pr KT}\right)\right]^{\frac{1}{r}}\,U(k)^{1-\frac{1}{r}}
\end{align}
for $p\geq \ell^{-1}$ and $\sigma\leq \ell p^{-\frac{1}{2}}$, where $C=C(n,K,\alpha,V,\Theta,\sigma,p)$.

Since, by H\"older's inequality,
\begin{align*}%\label{eq:uest3}
u(k)\leq{}& U(k)^{\frac{2}{n+2}}\left(\int_0^T\hspace{-2mm}\int v_k^{\frac{2(n+2)}{n}}d\mu\,dt\right)^{\frac{n}{n+2}},
\end{align*}
the estimates \eqref{eq:uest1} and \eqref{eq:uest2} yield
\begin{align}\label{eq:iteration2}
u(k)\leq{}&C\,U(k)^{\frac{2}{n+2}+1-\frac{1}{r}}
\end{align}
for $k>k_0$, where $C=C(n,K,\alpha,V,\Theta,\sigma,p,r)$. We conclude from \eqref{eq:iteration1} that
\begin{align*}
(h-k)^p\,U(h)\leq{}&C\,U(k)^{\gamma}\,,
\end{align*}
where $\gamma:=1+\frac{2}{n+2}-\frac{1}{r}$ and $C=C(n,K,\alpha,V,\Theta,\sigma,p,r)$.

At this point, we fix $r>1+\frac{2}{n}$ (so that $\gamma>1$), $p^{-1}=\ell(n,\alpha,\eta)$, and $\sigma=\ell p^{-\frac{1}{2}}=\ell^{\frac{3}{2}}$. Stampacchia's Lemma \cite[Lemma 4.1]{St66} then yields
\begin{align*}
U(k_0+d)=0\,,
\end{align*}
where
\begin{align*}
d^p:=2^{p\gamma/(\gamma-1)}CU(k_0)^{\gamma-1}\,.
\end{align*}
Estimating via \eqref{eq:Lpest} (assuming $k_0\geq 1$)
\begin{align*}
U(k_0)\leq k_0^{-p}\int_0^T\hspace{-2mm}\int f_+^p\,d\mu\,dt\leq C(n,K,\alpha,V,\Theta,\eta)\,,
\end{align*}
we conclude that
\begin{align*}
\mathrm{e}^{2\delta Kt}f_{\sigma,\eta}\leq C(n,K,\alpha,V,\Theta,\eta)\,.
\end{align*}
Young's inequality then yields
\begin{align*}
|A|^2-\frac{1}{n-1}H^2\leq 2\eta H+C(n,K,\alpha,V,\Theta,\eta)\vts\mathrm{e}^{-2\delta K t}\,.
\end{align*}
The theorem follows by the scaling covariance of the estimate.
\end{proof}

\subsection{The gradient estimate}

Next, we derive a suitable analogue of the ``gradient estimate" \cite[Theorem 6.1]{HuSi09}. We need the following a priori interior estimates for solutions with initial data in the class $\mathcal{C}^n_K(\alpha,V,\Theta)$.

\begin{proposition}\label{prop:class C universal interior estimates}
Let $X:\M\times [0,T)\to\mathbb{S}_K^{n+1}$ be a maximal solution to mean curvature flow with initial condition in the class $\mathcal{C}_K^n(\alpha,V,\Theta)$. Defining $\Lambda_0$ and $\lambda_0$ by
\begin{equation}\label{eq:lambdas}
\Lambda_0/2:=\frac{1}{n-2+\alpha}\Theta^2+2(2-\alpha)\;\;\text{and}\;\;\mathrm{e}^{2n\lambda_0}:=1+\frac{n}{n+\Lambda_0},
\end{equation}
we have
\begin{equation}\label{eq:class C universal T bound}
\mathrm{e}^{2nKT}\ge 1+\frac{2n}{\Lambda_0}\,,
\end{equation}
and
\begin{equation}\label{eq:class C universal interior estimates}
\max_{\M\times\{\lambda_0K^{-1}\}}\vert\cd^k{A}\vert^2\leq \Lambda_kK^{k+1}
\end{equation}
for every $k\in\mathbb{N}$, where $\Lambda_k$ depends only on $n$, $k$ and $\Lambda_0$.
\end{proposition}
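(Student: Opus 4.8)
The plan is to derive everything from the scalar evolution equation \eqref{eq_H} for $H$ together with the Bernstein estimates (Proposition \ref{prop:Bernstein}), using only the quantities already controlled for initial data in $\mathcal{C}_K^n(\alpha,V,\Theta)$. The first observation is that, by condition (1) in the definition of $\mathcal{C}_K^n(\alpha,V,\Theta)$ together with the preservation of the quadratic pinching from \S\ref{ssec:quadratic pinching}, we have $\vert A\vert^2\le \frac{1}{n-2+\alpha}H^2+2(2-\alpha)K$ for all $t$; combined with $H^2\le \Theta K$ at $t=0$ this gives a bound on $\vert A\vert^2$ at $t=0$. To propagate a bound on $H^2$ (hence on $\vert A\vert^2$) forward a definite amount of time I would use the ODE comparison coming from \eqref{eq_H}: along the flow, $\partial_t H_{\max}^2 \le 2(\vert A\vert^2+nK)H_{\max}^2$, and feeding in the pinching inequality $\vert A\vert^2\le \frac{1}{n-2+\alpha}H^2+2(2-\alpha)K$ gives a closed differential inequality for $H^2_{\max}(t)$ of Riccati type. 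Integrating this Riccati inequality shows that $H^2_{\max}$ can at most double (say) before a time $t$ with $\mathrm{e}^{2nKt}$ of a definite size; this is exactly where the quantity $\Lambda_0/2 = \frac{1}{n-2+\alpha}\Theta^2 + 2(2-\alpha)$ and the time $\lambda_0$ with $\mathrm{e}^{2n\lambda_0}=1+\frac{n}{n+\Lambda_0}$ come from. Running the same comparison from below (using $\vert A\vert^2\ge \frac1n H^2$ and $H^2_{\max}>0$ somewhere, or rather a lower bound coming from the fact that a closed hypersurface in the sphere cannot be flat) forces the maximal existence time to satisfy $\mathrm{e}^{2nKT}\ge 1+\frac{2n}{\Lambda_0}$, which is \eqref{eq:class C universal T bound}.

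Concretely, for \eqref{eq:class C universal T bound} and the curvature bound on $[0,\lambda_0 K^{-1}]$: from the pinching, $\vert A\vert^2 + nK \le \frac{1}{n-2+\alpha}H^2 + (2(2-\alpha)+n)K \le \frac{1}{n-2+\alpha}H^2 + \ldots$, so $\frac{d}{dt}H^2_{\max} \le \frac{2}{n-2+\alpha}(H^2_{\max})^2 + 2nK\cdot(\text{stuff})$; this Riccati-type inequality is solved explicitly, and choosing $\Lambda_0$ as above guarantees $\max_{\M\times[0,\lambda_0K^{-1}]}\vert A\vert^2 \le \Lambda_0 K$. For the existence-time lower bound one argues that if $T$ were too small then $H^2$, and hence $\vert A\vert^2$, would be bounded on all of $\M\times[0,T)$ by a quantity controlled in terms of $\Lambda_0$, and then a continuation/standard short-time-existence argument would extend the flow past $T$, contradicting maximality; the threshold $1+\frac{2n}{\Lambda_0}$ is the value of $\mathrm{e}^{2nKt}$ at which the Riccati solution with the given initial bound can first blow up.

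Once $\max_{\M\times[0,\lambda_0 K^{-1}]}\vert A\vert^2 \le \Lambda_0 K$ is established, the higher-order estimates \eqref{eq:class C universal interior estimates} are \emph{immediate} from the Bernstein estimates, Proposition \ref{prop:Bernstein}, applied on the time interval $[0,\lambda_0 K^{-1}]$ with $\lambda = \lambda_0$: that proposition gives $t^m\vert\cd^m A\vert^2 \le \Lambda_m K$ with $\Lambda_m=\Lambda_m(n,m,\lambda_0,\Lambda_0)$, and evaluating at $t=\lambda_0 K^{-1}$ yields $\vert\cd^k A\vert^2 \le \Lambda_k K^{k+1}$ after absorbing the factor $(\lambda_0 K^{-1})^{-k}$ into a renamed constant $\Lambda_k$ depending only on $n$, $k$, $\Lambda_0$ (since $\lambda_0$ itself depends only on $n$ and $\Lambda_0$). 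The only genuinely delicate point is the Riccati integration and, above all, the lower bound on $T$: one must be careful that "$H^2$ stays bounded" really does force continuation, which requires knowing the solution exists as long as the second fundamental form is bounded — a standard fact for mean curvature flow of closed (or properly immersed, with the relevant estimates) hypersurfaces that I would invoke. I expect the bookkeeping of constants in the Riccati comparison — matching the precise algebraic definitions of $\Lambda_0$ and $\lambda_0$ in \eqref{eq:lambdas} — to be the main place where care is needed, but no new idea is required.
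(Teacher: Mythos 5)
Your proposal is correct in substance and follows the same skeleton as the paper: a scalar ODE comparison to get a uniform curvature bound on $[0,\lambda_0K^{-1}]$ and the lower bound on $T$, followed by an application of the Bernstein estimates (Proposition \ref{prop:Bernstein}) at time $\lambda_0K^{-1}$, which is indeed immediate exactly as you say. The one place where you deviate is the choice of scalar quantity: you run the Riccati comparison on $H^2_{\max}$ via \eqref{eq_H} and then import the preserved pinching inequality to close the ODE and to convert back to a bound on $\vert A\vert^2$. The paper instead applies the maximum principle directly to $(\pd_t-\Delta)\vert A\vert^2\le 2(\vert A\vert^2+nK)\vert A\vert^2$, which follows from \eqref{evol_Asquare} without any pinching input; the comparison ODE $y'=2(y+nK)y$ with $y(0)=\Lambda_0K/2$ is solved explicitly as $y(t)=nK\big[(1+\tfrac{2n}{\Lambda_0})\mathrm{e}^{-2nKt}-1\big]^{-1}$, and the two constants in \eqref{eq:lambdas} are read off as, respectively, the blow-up time of this solution (giving \eqref{eq:class C universal T bound}) and its doubling time (giving $\vert A\vert^2\le\Lambda_0K$ up to $t=\lambda_0K^{-1}$). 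Your detour through $H^2$ produces a different Riccati coefficient ($\tfrac{2}{n-2+\alpha}$ rather than $2$) and hence different thresholds, so it would prove the proposition only after redefining $\Lambda_0$ and $\lambda_0$; to reproduce \eqref{eq:lambdas} verbatim you should work with $\vert A\vert^2$ directly. Your continuation argument for \eqref{eq:class C universal T bound} (bounded $\vert A\vert$ on $[0,T)$ forces extendability, contradicting maximality) is the right mechanism and is what the paper is implicitly invoking; the earlier remark about a ``comparison from below'' is not needed and should be dropped.
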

\begin{proof}
Since
\[
\max_{\M\times\{0\}}\vert{A}\vert^{2}\leq \Lambda_0K/2\,,
\]
a straightforward \textsc{ode} comparison argument applied to the inequality
\[
(\pd_t-\Delta)\vert{A}\vert^2\leq 2(\vert A\vert^2+nK)\vert{A}\vert^2
\]
yields
\[
\max_{\M\times\{t\}}\vert{A}\vert^{2}\leq\frac{nK}{\left(1+\frac{2n}{\Lambda_0}\right)\mathrm{e}^{-2nKt}-1}\,.
\]
We immediately obtain \eqref{eq:class C universal T bound} and
\ba\label{eq:curvature_bounded_gradient_estimate}
\vert{A}\vert^2(\vts\cdot\vts,t)\leq \Lambda_0K\;\;\text{for all}\;\; t\leq \lambda_0K^{-1}\,.
\ea
The claim \eqref{eq:class C universal interior estimates} now follows from the Bernstein estimates (Proposition \ref{prop:Bernstein}).
\end{proof}

Modifying an argument of Huisken \cite[Theorem 6.1]{Hu84} and Huisken--Sinestrari \cite[Theorem 6.1]{HuSi09}, we can now obtain a pointwise estimate for the gradient of the second fundamental form which holds up to the singular time.

\begin{theorem}[Gradient estimate (cf. {\cite[Theorem 6.1]{HuSi09}})]\label{thm:gradient estimate}
Let $X:\M\times[0,T)\to\mathbb{S}_K^{n+1}$, $n\geq 2$, be a solution to mean curvature flow with initial condition in the class $\mathcal{C}_K^n(\alpha,V,\Theta)$. There exist constants $\delta=\delta(n,\alpha)$, $c=c(n,\alpha,\Theta)<\infty$, $\eta_0=\eta_0(n)>0$ and, for every $\eta\in(0,\eta_0)$, $C_\eta=C_\eta(n,\alpha,V,\Theta,\eta)<\infty$ such that
\begin{equation}\label{eq:gradient estimate}
\vert\cd  A\vert^2\leq c\left[(\eta+\tfrac{1}{n-1})H^2-\vert{ A}\vert^2\right]W+C_\eta K^2\mathrm{e}^{-2\delta Kt}
\end{equation}
in $\M\times [\lambda_0K^{-1},T)$, where $\lambda_0$ is defined by \eqref{eq:lambdas}.
\end{theorem}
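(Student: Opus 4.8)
The plan is to follow the scheme of Huisken--Sinestrari \cite[Theorem 6.1]{HuSi09}: introduce an auxiliary function of the form
\[
g := \frac{\vert \cd A\vert^2}{W} + N\vts\frac{f_{\sigma,\eta}}{W} + \text{(lower-order correction)}
\]
and show, using a suitable choice of a large constant $N=N(n,\alpha,\Theta)$, that $g$ satisfies a differential inequality to which the maximum principle applies. The essential point is that the ``bad'' reaction term in the evolution of $\vert\cd A\vert^2$ — schematically $+c_n(\vert A\vert^2+nK)\vert\cd A\vert^2$ from \eqref{eqn_evolderiv} — is, after dividing by $W$ (which is comparable to $\vert A\vert^2+K$), only linear in $\vert\cd A\vert^2/W$, whereas the negative term $-2\vert\cd^2 A\vert^2/W$ can be leveraged against the gradient terms produced by the quotient rule. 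First I would compute $(\pd_t-\Delta)(\vert\cd A\vert^2/W)$ using \eqref{eqn_evolderiv}, the evolution of $W$ derived in the proof of Lemma \ref{lem:evol_fsigma}, and the quotient/product rules, carefully tracking the cross term $2\la\cd(\vert\cd A\vert^2/W),\cd\log W\ra$ and the term $-2\vert\cd^2A\vis\vert^2/W$. The negative Hessian term, combined with the good gradient term $-2\delta f_{\sigma,\eta}\vert\cd A\vert^2/W$ already available in \eqref{eq:ineqn_evol}, should absorb the gradient error terms provided $N$ is chosen large.

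Second, I would use the cylindrical estimate (Theorem \ref{thm:cylindrical estimate}) and the pinching inequality \eqref{eq:uniform quadratic pinching} to control the zeroth-order terms: both $\vert A\vert^2$ and $W$ are bounded by $C_n H^2 + C_nK$, and $f_{\sigma,\eta}$ (hence $f_+$) decays like $\mathrm{e}^{-2\delta Kt}$ up to a term of size $\eta H^2$; this is what produces the two pieces $c[(\eta+\tfrac1{n-1})H^2-\vert A\vert^2]W$ and $C_\eta K^2\mathrm{e}^{-2\delta Kt}$ on the right-hand side of \eqref{eq:gradient estimate}. The interior estimates of Proposition \ref{prop:class C universal interior estimates} supply the needed initial bound on $\vert\cd A\vert^2$ at time $\lambda_0K^{-1}$ — specifically $\vert\cd A\vert^2\le \Lambda_1 K^2$ there — which, after dividing by $W\ge bK$, gives a bound on $g$ at the starting time. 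Then I would set up the maximum principle on $\M\times[\lambda_0K^{-1},T)$: at an interior spatial maximum of $g$ the gradient of $g$ vanishes, killing the transport term, and the differential inequality forces $g$ to stay bounded by its initial value together with an $\mathrm{e}^{-2\delta Kt}$-contribution coming from the coercive $-4\delta K f_{\sigma,\eta}$ term.

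The main obstacle I anticipate is the bookkeeping in the evolution of $\vert\cd A\vert^2/W$: one must show that, after the substitution, the remaining gradient error terms — those involving $\vert\cd A\vert\,\vert\cd^2 A\vert$ and $\vert\cd A\vert^2\,\vert\cd W\vert/W$ — are genuinely dominated by the sum of $-2\vert\cd^2A\vert^2/W$ and (a multiple of) $-2\delta f_{\sigma,\eta}\vert\cd A\vert^2/W$ once $N$ is large, and that this domination survives the presence of the zeroth-order cylindrical error terms. This is delicate because $f_{\sigma,\eta}$ can be small (indeed it vanishes on cylindrical regions), so the good gradient term it multiplies degenerates precisely where one needs control; the resolution, as in \cite{HuSi09}, is that on the region where $f_{\sigma,\eta}$ is small the hypersurface is nearly cylindrical, and there the geometry of a round cylinder forces $\vert\cd A\vert$ to be controlled directly by the quadratic pinching deficit, while on the region where $f_{\sigma,\eta}$ is bounded below the good term is effective. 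Making this dichotomy quantitative — choosing $\eta$, $\sigma$, and $N$ in the correct order of dependence — is the crux of the argument.
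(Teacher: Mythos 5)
Your scaffolding is right in outline (the cylindrical estimate supplies positivity of a quantity of the form $(\eta+\tfrac{1}{n-1})H^2-\vert A\vert^2+2C_\eta K\mathrm{e}^{-2\delta Kt}$, Proposition \ref{prop:class C universal interior estimates} supplies the starting bound at $t=\lambda_0K^{-1}$, Young's inequality finishes), but the core mechanism in your additive ansatz $g=\vert\nabla A\vert^2/W+N f_{\sigma,\eta}/W$ does not close. The bad term from \eqref{eqn_evolderiv} is $+c_n(\vert A\vert^2+nK)\vert\nabla A\vert^2/W$, i.e.\ linear in $u:=\vert\nabla A\vert^2/W$ with coefficient of size $W$, which is unbounded. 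The only negative term your correction contributes is, schematically, $-2\delta N(f_{\sigma,\eta}/W)\,u$ coming from \eqref{eq:ineqn_evol}; its coefficient $f_{\sigma,\eta}/W\le W^{\sigma-1}$ tends to \emph{zero} at high curvature, so no fixed $N$ can dominate a coefficient growing like $W$ (and \eqref{eq:ineqn_evol} is only valid where $f_{\sigma,\eta}>0$ in any case). The Hessian term $-2\vert\nabla^2A\vert^2$ does not help, as it is not comparable to $\vert\nabla A\vert^4$. Your proposed repair --- that where $f_{\sigma,\eta}$ is small the hypersurface is nearly cylindrical and hence $\vert\nabla A\vert$ is controlled directly --- is not a pointwise implication, and making it rigorous would require the neck-detection compactness machinery, which itself rests on the gradient estimate; this is circular. (An additive combination of this kind does work one order up, in the Hessian estimate, precisely because the gradient estimate is then already available to control the error terms.)

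The paper instead follows \cite[Theorem 6.1]{HuSi09} literally and bounds the two-factor quotient $\vert\nabla A\vert^2/(G_\eta G_0)$, where $G_\eta:=2C_\eta K\mathrm{e}^{-2\delta Kt}+(\eta+\tfrac1{n-1})H^2-\vert A\vert^2>0$ (positive by the cylindrical estimate) and $G_0:=2C_0K+\tfrac3{n+2}H^2-\vert A\vert^2>0$. The decisive ingredient missing from your scheme is that, by \eqref{eq:evolve_f_eta} together with the Kato inequality \eqref{ineqn_Kato}, the evolution of $G_\eta$ contains the \emph{positive} term $+\beta\vert\nabla A\vert^2$ with $\beta=\beta(n)>0$; upon dividing, this becomes the quadratically coercive term $-\beta\,G_0\left(\vert\nabla A\vert^2/(G_\eta G_0)\right)^2$, and since pinching gives $G_0\ge c(n,\alpha)(H^2+K)\ge c(n,\alpha)(\vert A\vert^2+nK)$, it dominates the bad reaction term at an interior parabolic maximum, giving $\vert\nabla A\vert^2/(G_\eta G_0)\le C(n,\alpha)$. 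The factor $\mathrm{e}^{-2\delta Kt}$ in \eqref{eq:gradient estimate} then enters through the $K\mathrm{e}^{-2\delta Kt}$ built into $G_\eta$ (and the estimate $2\delta C_\eta K^2\mathrm{e}^{-2\delta Kt}\le 2\delta KG_\eta$), not from integrating a coercive $-4\delta Kf_{\sigma,\eta}$ term as you describe.
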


\begin{remark}
As for the cylindrical estimate, the constants do not depend on a bound for the maximal time, and the zeroth order term becomes negligible for large times.
\end{remark}

Note that the conclusion is not vacuous since, by Proposition \ref{prop:class C universal interior estimates}, the maximal existence time of a solution with initial data in the class $\mathcal{C}_K^n(\alpha,V,\Theta)$ is at least $\frac{1}{2nK}\log\left(1+\frac{2n}{\Lambda_0}\right)>\lambda_0 K^{-1}$. 

Setting $\eta=1$, say, yields the cruder estimate
\begin{equation}\label{eq:gradient estimate eta=1}
\vert\cd  A\vert^2\leq C(H^4+K^2),
\end{equation}
where $C=C(n,\alpha,V,\Theta)$.

\begin{proof}[Proof of Theorem \ref{thm:gradient estimate}]
We proceed as in \cite[Theorem 6.1]{HuSi09}. By \eqref{eqn_evolderiv},
\bann
(\partial_t- \Delta)|\nabla A|^2 \leq{}& -2 |\nabla^2 A|^2 + c_n\lb |A|^2+nK\rb|\nabla A|^2\,.
\eann
We will control the bad term using the good term in the evolution equation for $\vert A \vert^2$ and the Kato inequality \eqref{ineqn_Kato}.
 
By the cylindrical estimate, given any $\eta>0$ we can find a constant $C_\eta=C_\eta(n,\alpha,V,\Theta,\eta)>2$ such that
\[
\vert A\vert^2-\frac{1}{n-1}H^2\leq \eta H^2+C_\eta K\mathrm{e}^{-2\delta Kt}\,,
\]
and hence
\[
G_\eta:=2C_\eta K\mathrm{e}^{-2\delta Kt}+\left(\eta+\frac{1}{n-1}\right)H^2-\vert A\vert^2\geq C_\eta K\mathrm{e}^{-2\delta Kt}>0\,.
\]
Similarly, there is a constant $C_0=C_0(n,\alpha,V,\Theta)>2$ such that
\[
\vert A\vert^2-\frac{1}{n-1}H^2\leq \frac{2(n-2)}{(n+2)(n-1)}H^2+C_0K\,,
\]
which ensures that
\[
G_0:=2C_0K+\frac{3}{n+2} H^2-\vert A\vert^2\geq C_0K>0\,.
\]

By \eqref{eq:evolve_f_eta},
\bann
(\pd_t-\Delta)G_\eta={}&2(\vert A\vert^2+nK)(G_\eta-2C_\eta K\mathrm{e}^{-2\delta Kt})+ 4nK\left(\vert A\vert^2-\tfrac{1}{n}H^2\right)\\
{}&+2\left[\vert\cd  A\vert^2-\lb\eta+\tfrac{1}{n-1}\rb\vert\cd H\vert^2\right]-2\delta C_\eta K^2\mathrm{e}^{-2\delta Kt}\,.
\eann
Since $G_\eta\geq C_\eta K\mathrm{e}^{-2\delta Kt}$, we can estimate $G_\eta-2C_\eta K\mathrm{e}^{-2\delta Kt}\geq -G_\eta$. By the Kato inequality \eqref{ineqn_Kato}, we can estimate
\bann
\vert\cd  A\vert^2-\lb\tfrac{1}{n-1}+\eta\rb\vert\cd H\vert^2\ge{}& \tfrac{n+2}{3}\left[\tfrac{3}{n+2}-\tfrac{1}{n-1}-\eta\right]\vert\cd A\vert^2\\
\geq{}& \tfrac{\beta}{2}\vert\cd  A\vert^2\,,
\eann
where
\begin{equation}\label{eq:beta in grad est}
\beta := \frac{1}{2} \left( \frac{ 3}{n+2} - \frac{1}{n-1} \right),%=\frac{2n-5}{2(n+2)(n-1)},
\end{equation}
so long as $\eta\leq \left(1-\frac{3}{4(n+2)}\right)\beta$. Estimating, finally, 
\[
2\delta C_\eta K^2\mathrm{e}^{-2\delta Kt}\le 2\delta KG_\eta\,,
\]
we arrive at
\bann
(\pd_t-\Delta)G_\eta\geq{}&-2(\vert A\vert^2+nK)G_\eta+\beta\vert\cd  A\vert^2-2\delta KG_\eta\,.
\eann

Similarly,
\bann
(\pd_t-\Delta)G_0\geq{}&-2(\vert A\vert^2+nK)G_0\,.
\eann

We seek a bound for the ratio $\frac{\vert\cd  A\vert^2}{G_\eta G_0}$. Note that, at a local spatial maximum of $\frac{\vert\cd  A\vert^2}{G_\eta G_0}$,
\bann
0=\cd_k\frac{\vert\cd  A\vert^2}{G_\eta G_0}=2\frac{\inner{\cd_k\cd  A}{\cd  A}}{G_\eta G_0}-\frac{\vert\cd  A\vert^2}{G_\eta G_0}\lb\frac{\cd_k G_\eta}{G_\eta}+\frac{\cd_kG_0}{G_0}\rb.
\eann
In particular,
\bann
4\frac{\vert\cd  A\vert^2}{G_\eta G_0}\inner{\frac{\cd G_\eta}{G_\eta}}{\frac{\cd G_0}{G_0}}\leq{}&\frac{\vert\cd  A\vert^2}{G_\eta G_0}\left\vert\frac{\cd G_\eta}{G_\eta}+\frac{\cd G_0}{G_0}\right\vert^2\leq4\frac{\vert\cd^2  A\vert^2}{G_\eta G_0}\,.
\eann
%Let $(x_0,t_0)$ be a local parabolic maximum of $\frac{\vert\cd  A\vert^2}{G_\eta G_0}$. That is, there is some $r>0$ such that
%\[
%\frac{\vert\cd  A\vert^2}{G_\eta G_0}(x_0,t_0)=\max_{P_r(x_0,t_0)}\frac{\vert\cd  A\vert^2}{G_\eta G_0}\,,
%\]
%where, denoting by $B_r(x_0,t_0)$ the ball in $M$ centered at $x_0$ of radius $r$ with respect to the metric at time $t_0$, $P_r(x_0,t_0):= B_r(x_0,t_0)\times (t_0-r^2,t_0]$. 
Suppose that $\frac{\vert \cd A\vert^2}{G_\eta G_0}$ attains a parabolic interior local maximum at $(x_0,t_0)$. Then, at $(x_0,t_0)$,
\bann
0\leq{}& (\pd_t-\Delta)\frac{\vert\cd  A\vert^2}{G_\eta G_0}\\
={}&\frac{(\pd_t-\Delta)\vert\cd  A\vert^2}{G_\eta G_0}-\frac{\vert\cd  A\vert^2}{G_\eta G_0}\lb\frac{(\pd_t-\Delta)G_\eta}{G_\eta}+\frac{(\pd_t-\Delta)G_0}{G_0}\rb\\
{}&+\frac{2}{G_\eta G_0}\inner{\cd\frac{\vert\cd  A\vert^2}{G_\eta G_0}}{\cd(G_\eta G_0)}+2\frac{\vert\cd  A\vert^2}{G_\eta G_0}\inner{\frac{\cd G_\eta}{G_\eta}}{\frac{\cd G_0}{G_0}}\\
\leq{}&\frac{\vert\cd  A\vert^2}{G_\eta G_0}\lb (c_n+4)(\vert A\vert^2+nK)+2\delta K-\beta\frac{\vert\cd  A\vert^2}{G_\eta}\rb
\eann
and hence
\bann
\frac{\vert\cd  A\vert^2}{G_\eta G_0}\leq{}& \frac{(c_n+4)(\vert A\vert^2+nK)+2\delta K}{2C_0K+\frac{3}{n+2}H^2-\vert A\vert^2}\,.
\eann
Since
\[
\vert A\vert^2\le \frac{1}{n-2+\alpha}H^2+2(2-\alpha)K\,,
\]
we obtain, at $(x_0,t_0)$,
\bann
\frac{\vert\cd  A\vert^2}{G_\eta G_0}%\leq{}& \frac{C_n(H^2+K)}{2(C_0-(2-\alpha))K+\left(\frac{3}{n+2}-\frac{1}{n-2+\alpha}\right)H^2}\\
\le{}&C\,,
\eann
where $C$ depends only on $n$ and $\alpha$.

On the other hand, since $G_0>C_0K$ and $G_\eta>C_\eta K\mathrm{e}^{-2\delta Kt}$, if no interior local parabolic maxima are attained, then, by Proposition \ref{prop:class C universal interior estimates}, we have for any $t\ge \lambda_0K^{-1}$
\bann
\max_{\M\times \{t\}}\frac{\vert\cd{A}\vert^2}{G_0G_\eta}\leq{}&\max_{\M\times \{\lambda_0K^{-1}\}}\frac{\vert\cd{A}\vert^2}{G_0G_\eta}\\
\leq{}&\max_{\M\times \{\lambda_0K^{-1}\}}\frac{\vert\cd{A}\vert^2}{C_0C_\eta K^2\mathrm{e}^{-2\delta \lambda_0}}\\
\leq{}&\frac{\Lambda_1\mathrm{e}^{\lambda_0}}{C_0C_\eta}\\
\leq{}&\Lambda_1\mathrm{e}^{\lambda_0}\,.
\eann
The Theorem now follows from Young's inequality.
%\bann
%\vert\cd{A}\vert^2\le{}&c(2C_0K+\tfrac{3}{n+2}H^2)(2C_\eta K\mathrm{e}^{-2\alpha Kt}+(\tfrac{1}{n-1}+\eta)H^2-\vert A\vert^2)\\
%\le{}&C_\eta'K^2\mathrm{e}^{-2\alpha Kt}+C_0' K\mathrm{e}^{-2\alpha Kt}H^2\\
%{}&+C_0'(H^2+K)\big((\tfrac{1}{n-1}+\eta)H^2-\vert A\vert^2\big)\\
%\eann
\end{proof}

The following simple lemma illustrates the utility of scale-invariant, pointwise gradient estimates for the curvature.
\begin{lemma}\label{lem:consequence of gradient estimate}
Let $X:\M\to\mathbb{S}^{n+1}$ be an immersed hypersurface. If
\[
\sup_{\M}\frac{\vert\cd H\vert}{H^2}\leq c_\sharp<\infty\,,
\]
then
\begin{equation}
\frac{H(p)}{2}\leq H(q)\leq 2H(p)
\end{equation}
for all $q\in \mathcal{B}_{\frac{1}{2c_\sharp H(p)}}(p)$, the intrinsic ball of radius $\frac{1}{2c_\sharp H(p)}$ about $p$.
\end{lemma}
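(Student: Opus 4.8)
The plan is to integrate the hypothesized differential inequality $\vert\cd H\vert\le c_\sharp H^2$ along a minimizing geodesic. First I would fix $p\in\M$ and let $q\in\mathcal{B}_{\rho}(p)$ with $\rho:=\frac{1}{2c_\sharp H(p)}$, and choose a unit-speed minimizing geodesic $\gamma:[0,d]\to\M$ from $p$ to $q$, where $d:=\dist(p,q)<\rho$. Along $\gamma$ consider the function $\varphi(s):=H(\gamma(s))$; it satisfies $\vert\varphi'(s)\vert=\vert\cd_{\dot\gamma}H\vert\le\vert\cd H\vert\le c_\sharp\varphi(s)^2$. (There is a minor sign subtlety: the hypothesis bounds $\vert\cd H\vert$, so this works regardless of the sign of $H$; to keep $H$ positive one notes $H(p)>0$ implies, via the differential inequality, that $H$ cannot vanish before the bound below is violated — alternatively one simply works with $\vert H\vert$ in place of $H$ throughout.)

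The key step is the elementary \textsc{ode} comparison: from $\varphi'\le c_\sharp\varphi^2$ we get $\left(\varphi^{-1}\right)'=-\varphi^{-2}\varphi'\ge -c_\sharp$, so $\varphi(s)^{-1}\ge \varphi(0)^{-1}-c_\sharp s$ for all $s$ in the interval where $\varphi>0$. Hence for $s\le d<\rho=\frac{1}{2c_\sharp H(p)}$ we obtain $\varphi(s)^{-1}\ge \frac{1}{H(p)}-c_\sharp s>\frac{1}{H(p)}-\frac{1}{2H(p)}=\frac{1}{2H(p)}$, which rearranges to $\varphi(s)\le 2H(p)$; in particular $H(q)\le 2H(p)$, and this also confirms $H$ stays positive along $\gamma$ so the comparison is valid on all of $[0,d]$. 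For the lower bound, from $\varphi'\ge -c_\sharp\varphi^2$ we get $\left(\varphi^{-1}\right)'\le c_\sharp$, so $\varphi(s)^{-1}\le \varphi(0)^{-1}+c_\sharp s<\frac{1}{H(p)}+\frac{1}{2H(p)}=\frac{3}{2H(p)}$, giving $\varphi(s)\ge \frac{2}{3}H(p)\ge \frac{1}{2}H(p)$. Evaluating at $s=d$ yields $H(q)\ge\frac12 H(p)$, completing the proof.

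I do not expect a serious obstacle here — the only thing to be careful about is ensuring the geodesic stays within the region where the hypothesis applies (it does, since $\M$ is the whole hypersurface) and that $H$ does not change sign on $\gamma$, which follows \emph{a posteriori} from the bound $\frac12 H(p)\le \varphi(s)\le 2H(p)$ established by the same comparison argument (one runs the continuity/bootstrap argument on the maximal subinterval of $[0,d]$ on which $\frac12 H(p)\le\varphi\le 2H(p)$ and shows it is all of $[0,d]$). The minor points to state cleanly are: the chain rule identity $\frac{d}{ds}H(\gamma(s))=g(\cd H,\dot\gamma)$ together with $\vert\dot\gamma\vert=1$, and the fact that a minimizing geodesic of length $d<\rho$ exists and lies in $\mathcal{B}_\rho(p)$.
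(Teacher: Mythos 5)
Your proof is correct and follows essentially the same route as the paper: both integrate the bound $\bigl\vert\tfrac{d}{ds}H^{-1}(\gamma(s))\bigr\vert\le c_\sharp$ along a unit-speed geodesic and rearrange, obtaining the same two-sided bound (the paper writes it as $\frac{H(p)}{1+c_\sharp H(p)s}\le H(q)\le\frac{H(p)}{1-c_\sharp H(p)s}$ for $s\le\frac{1}{2c_\sharp H(p)}$). Your extra care about the sign of $H$ and the continuity/bootstrap step is a harmless refinement of the same argument.
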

\begin{proof}
For any unit speed geodesic $\gamma:[0,s]\to \M$ joining the points $y=\gamma(0)$ and $x=\gamma(s)$, we have
\[
\cd_{\gamma'}H^{-1}\leq c_\sharp\,.
\]
Integrating yields
\[
-c_\sharp s\leq H^{-1}(x)-H^{-1}(y)\leq c_\sharp s
\]
or, if $s\leq \frac{1}{2c_\sharp H(x)}$, 
\[
\frac{H(x)}{2}\leq\frac{H(x)}{1+c_\sharp H(x)s}\leq H(y)\leq\frac{H(x)}{1-c_\sharp H(x)s}\leq 2H(x)\,.
\]
The claim follows.
\end{proof}
%In particular, if we translate and rescale so that the mean curvature is $1$ at the origin, then it is bounded by $2$ in an intrinsic ball of radius $(2C)^{-1}$ about the origin.

\begin{comment}
A slightly more involved argument yields the following Lemma.
\begin{lemma}\label{lem:consequence of gradient estimate}
Let $X:\M\to\mathbb{S}_K^{n+1}$ be a properly immersed hypersurface. Suppose that
\[
H(p)\ge h_\sharp K^{\frac{1}{2}}\;\implies\; \vert \cd H(p)\vert\le c_\sharp H^2(p)
\]
for some choice of constants $h_\sharp$ and $c_\sharp$. If $H(p_0)>\gamma h_\sharp K^{\frac{1}{2}}$, then
\[
H(p)\ge\gamma^{-1}H(p_0)\;\;\text{for all}\;\; p\in \mathcal{B}_{\frac{\gamma-1}{c_\sharp H(p_0)}}(p_0)\,.
\]
%where $\mathcal{B}_r(p)$ denotes the intrinsic ball of radius $r$ about $p$.
\end{lemma}
\begin{proof}
The proof is identical to that of \cite[Lemma 6.6]{HuSi09}.
\end{proof}
\end{comment}

\subsection{Higher order estimates} 
 
The gradient estimate can be used to bound the first order terms which arise in the evolution equation for $\cd^2{A}$. A straightforward maximum principle argument exploiting this observation yields an analagous estimate for $\cd^2{A}$.

\begin{theorem}[Hessian estimate (cf. \cite{Hu84,HuSi09})]\label{thm:Hessian estimate}
Let $X:\M\times[0,T)\to\mathbb{S}_K^{n+1}$, $n\geq 4$, be a solution to mean curvature flow with initial condition in the class $\mathcal{C}_K^n(\alpha,V,\Theta)$. There exists $C=C(n,\alpha,V,\Theta)$ such that
\ba\label{eq:scale_invariant_Hessian_estimate}
\vert\cd^2{A}\vert^2\leq C(H^{6}+K^{3})\;\;\text{in}\;\; \M \times[\lambda_0K^{-1},T)\,.
\ea
\end{theorem}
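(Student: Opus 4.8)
The plan is to mimic the proof of the gradient estimate (Theorem \ref{thm:gradient estimate}): we bound the scale-invariant ratio
\[
Q:=\frac{|\cd^2 A|^2}{G_0\,H_\sharp}\,,
\]
where $G_0\asymp H^2+K$ is a positive function satisfying $(\pd_t-\Delta)G_0\geq -2(|A|^2+nK)G_0$ (for instance the function $G_0$ appearing in the proof of Theorem \ref{thm:gradient estimate}, where positivity and this differential inequality follow from the cylindrical estimate and the Kato inequality), and
\[
H_\sharp:=C_\sharp(H^4+K^2)-|\cd A|^2\,,\qquad C_\sharp:=2c_\ast,
\]
with $c_\ast=c_\ast(n,\alpha,V,\Theta)$ the constant from the crude gradient estimate \eqref{eq:gradient estimate eta=1}. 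By \eqref{eq:gradient estimate eta=1}, $c_\ast(H^4+K^2)\leq H_\sharp\leq 2c_\ast(H^4+K^2)$, so $G_0H_\sharp\asymp(H^2+K)(H^4+K^2)\asymp H^6+K^3$ and it suffices to bound $Q$ on $\M\times[\lambda_0K^{-1},T)$. Since $X$ is proper and $\mathbb S_K^{n+1}$ is compact, $\M$ is compact, so $Q$ attains its maximum over $\M\times[\lambda_0K^{-1},t_1]$ for each $t_1<T$.

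Since $K$ is constant, \eqref{evol_Hsquare} and \eqref{eqn_evolderiv} give
\[
(\pd_t-\Delta)H_\sharp\geq 4C_\sharp(|A|^2+nK)H^4-12C_\sharp H^2|\cd H|^2+2|\cd^2A|^2-c_n(|A|^2+nK)|\cd A|^2\,,
\]
where the crucial good term $+2|\cd^2A|^2$ arises from $-(\pd_t-\Delta)|\cd A|^2$ and becomes $2QG_0$ after dividing by $H_\sharp$ at a maximum of $Q$. Using $|\cd H|^2\leq\tfrac{n+2}{3}|\cd A|^2$ and $H_\sharp\geq c_\ast(H^4+K^2)$ to absorb the first-order terms, bounding $|\cd A|^4\leq c_\ast^2(H^4+K^2)^2$ via \eqref{eq:gradient estimate eta=1} so that $\tfrac{c_n|\cd A|^4}{G_0H_\sharp}\leq c_1(|A|^2+nK)$ is a harmless additive term, and invoking \eqref{eqn_evolderiv2} together with $(\pd_t-\Delta)G_0\geq-2(|A|^2+nK)G_0$, one finds, at an interior maximum of $Q$ with $t>\lambda_0K^{-1}$,
\[
0\leq (\pd_t-\Delta)Q\leq \mathrm{I}+c_2(|A|^2+nK)Q+c_1(|A|^2+nK)-2Q^2G_0\,,\qquad \mathrm{I}:=\frac{-2|\cd^3A|^2}{G_0H_\sharp}+2Q\inner{\frac{\cd G_0}{G_0}}{\frac{\cd H_\sharp}{H_\sharp}}\,,
\]
for constants $c_1,c_2$ depending only on $n,\alpha,V,\Theta$. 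Exactly as in the proof of Theorem \ref{thm:gradient estimate}, the relation $\cd Q=0$ at the maximum, the inequality $\inner ab\leq\tfrac14|a+b|^2$, and the Kato inequality $|\cd|\cd^2A|^2|^2\leq 4|\cd^2A|^2|\cd^3A|^2$ show that the cross term is bounded by $\tfrac{2|\cd^3A|^2}{G_0H_\sharp}$, whence $\mathrm{I}\leq 0$. Since $G_0\geq c_0(|A|^2+nK)$ for some $c_0=c_0(n,\alpha,V,\Theta)>0$ by the pinching condition, the surviving terms give $2c_0Q^2-c_2Q-c_1\leq 0$, i.e. $Q\leq C_2(n,\alpha,V,\Theta)$.

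It remains to bound $Q$ on the initial slice $\M\times\{\lambda_0K^{-1}\}$. There $|\cd^2A|^2\leq \Lambda_2K^3$ by the interior estimates of Proposition \ref{prop:class C universal interior estimates}, while $G_0H_\sharp\geq c_0K\cdot c_\ast K^2$, so $Q(\cdot,\lambda_0K^{-1})\leq C_3(n,\alpha,V,\Theta)$. Hence $\sup_{\M\times[\lambda_0K^{-1},T)}Q\leq\max\{C_2,C_3\}$, and the estimate \eqref{eq:scale_invariant_Hessian_estimate} follows from $G_0H_\sharp\leq C(H^6+K^3)$.

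The reason for the particular choice of $H_\sharp$ — and the one subtle point in the argument — is that a naive comparison of $|\cd^2A|^2$ with a degree-three polynomial in $(H^2,K)$ does not work: such a polynomial fails to be a supersolution of a suitable differential inequality, because the constant $K$ always produces, after differentiating, a lower-order term of the same size as the reaction term which cancels it. Packaging the already-established gradient estimate into the positive factor $H_\sharp$, whose evolution inherits the good term $-2|\cd^2A|^2$ from the evolution of $|\cd A|^2$, is precisely what generates the coercive quadratic term $-2Q^2G_0$; everything else is routine, provided one takes care to use only constants independent of $T$ (as is the case here, since \eqref{eq:gradient estimate eta=1} and Proposition \ref{prop:class C universal interior estimates} carry no such dependence).
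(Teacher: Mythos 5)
Your proof is correct, but it takes a genuinely different route from the paper's. The paper follows Huisken--Sinestrari's Theorem 6.3: it forms $f=\vert\cd^2A\vert^2W^{-5/2}+c_1\vert\cd A\vert^2W^{-3/2}+C_1K^2\vert\cd A\vert^2W^{-7/2}$, lets the good $-\vert\cd^2A\vert^2$ terms coming from the evolution of the two $\vert\cd A\vert^2$-quotients absorb the bad reaction and first-order terms of the leading quotient, and then absorbs the residual inhomogeneity $c_4(\vert A\vert^2+nK)H+C_4K^{3/2}\mathrm{e}^{-2\delta Kt}$ by subtracting $c_4H$ and a decaying multiple of $K^{1/2}$ before comparing with the slice $t=\lambda_0K^{-1}$. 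You instead transplant the paper's own gradient-estimate argument one derivative up: the quotient $Q=\vert\cd^2A\vert^2/(G_0H_\sharp)$, where $H_\sharp=2c_\ast(H^4+K^2)-\vert\cd A\vert^2$ is positive by the crude gradient estimate and inherits the coercive term $+2\vert\cd^2A\vert^2$ from $-(\pd_t-\Delta)\vert\cd A\vert^2$, which becomes $-2Q^2G_0$ at a spatial maximum; the cross term is dominated by $2\vert\cd^3A\vert^2/(G_0H_\sharp)$ exactly as in Theorem \ref{thm:gradient estimate}. Both arguments rest on the same inputs (\eqref{eqn_evolderiv2}, the crude gradient estimate \eqref{eq:gradient estimate eta=1}, and Proposition \ref{prop:class C universal interior estimates} on the initial slice), and both give constants independent of $T$. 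Your version avoids the bookkeeping of powers of $W$ and the comparison with $H$ (which is slightly delicate here since $H$ may change sign), at the cost of not tracking the exponentially decaying zeroth-order structure that the paper's computation produces as a byproduct. The one step you should make explicit is the lower bound $G_0\geq c_0(\vert A\vert^2+nK)$: this requires that the $\eta$ used in constructing $G_0$ from the cylindrical estimate satisfy $\eta<\tfrac{3}{n+2}-\tfrac{1}{n-1}$, so that $G_0\gtrsim H^2+K$ rather than merely $G_0\geq C_0K$ as recorded in the proof of Theorem \ref{thm:gradient estimate}; this holds for $n\geq 4$ (and for $n=3$ with $\alpha>\tfrac23$) but is not automatic from what the paper states.
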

\begin{proof}
We proceed as in \cite[Theorem 6.3]{HuSi09}. By \eqref{eqn_evolderiv2},
\[
(\pd_t-\Delta)\vert\cd^2{A}\vert^2\leq c\left(W\vert\cd^2{A}\vert^2+\vert\cd{A}\vert^4\right)-2\vert\cd^3{A}\vert^2\,,
\]
where $c$ depends only on $n$. It follows that
\bann
(\pd_t-\Delta)\frac{\vert\cd^2{A}\vert^2}{W^\frac{5}{2}}\leq{}& \frac{c}{W^\frac{5}{2}}\lsb W\vert\cd^2{A}\vert^2+\vert\cd{A}\vert^4\rsb-2\frac{\vert\cd^3{A}\vert^2}{W^\frac{5}{2}}\\
{}& \! -5a\frac{\vert\cd^2{A}\vert^2}{W^\frac{7}{2}}\lsb(\vert{A}\vert^2+nK)H^2-\vert\cd H\vert^2\rsb\\
{}&+\frac{5}{W^\frac{7}{2}}\inner{\cd\vert\cd^2{A}\vert^2}{\cd W}-\frac{25}{2}\frac{\vert\cd^2A\vert^2}{W^{\frac{7}{2}}}\frac{\vert \cd W\vert^2}{W}.
\eann

We can use the good third order term on the first line to absorb the penultimate term, since
\bann
\frac{5}{W^\frac{7}{2}}\inner{\cd\vert\cd^2{A}\vert^2}{\cd W}\le{}&\frac{10}{W^\frac{7}{2}}\vert\cd^3{A}\vert\vert\cd^2A\vert\vert\cd W\vert\\
\le {}&\frac{1}{W^\frac{1}{2}}\left(\frac{\vert\cd^3{A}\vert^2}{W^2}+25\frac{\vert\cd^2A\vert^2\vert\cd W\vert^2}{W^4}\right).
\eann
Estimating
\[
\frac{\vert\cd W\vert^2}{W}\le 4a\vert\cd H\vert^2
\]
then yields
\bann
(\pd_t-\Delta)\frac{\vert\cd^2{A}\vert^2}{W^\frac{5}{2}}\leq{}& \frac{c}{W^\frac{5}{2}}\lsb W\vert\cd^2{A}\vert^2+\vert\cd{A}\vert^4\rsb-\frac{\vert\cd^3{A}\vert^2}{W^\frac{5}{2}}
%{}&+5a\frac{\vert\cd^2{A}\vert^2}{W^\frac{7}{2}}\vert\cd H\vert^2+\frac{25}{2}\frac{\vert\cd^2A\vert^2}{W^{\frac{7}{2}}}\frac{\vert \cd W\vert^2}{W}.\\
+55a\frac{\vert\cd^2{A}\vert^2}{W^\frac{7}{2}}\vert\cd H\vert^2.
\eann

Estimating the first order terms using Theorem \ref{thm:gradient estimate} then yields
\bann
(\pd_t-\Delta)\frac{\vert\cd^2{A}\vert^2}{W^{\frac{5}{2}}}\leq{}&c_1\frac{\vert\cd^2{A}\vert^2}{W^{\frac{3}{2}}}+C_1K^2\frac{\vert\cd^2{A}\vert^2}{W^{\frac{7}{2}}}\mathrm{e}^{-\delta Kt}\\
{}&+\frac{c_1H^4W^2+C_1K^4\mathrm{e}^{-2\delta Kt}}{W^{\frac{5}{2}}}-\frac{\vert\cd^3{A}\vert^2}{W^{\frac{5}{2}}}\,,
\eann
where $c_1$ depends only on $n,\alpha$ and $\Theta$, and $C_1$ depends also on $V$.

Similar arguments yield
\bann
(\pd_t-\Delta)\frac{\vert\cd{A}\vert^2}{W^{\frac{3}{2}}}%={}&\frac{(\pd_t-\Delta)\vert\cd{A}\vert^2}{W^{\frac{3}{2}}}-\frac{\vert\cd{A}\vert^2}{W^3}(\pd_t-\Delta)W^{\frac{3}{2}}+2\inner{\cd\frac{\vert\cd A\vert^2}{W^{\frac{3}{2}}}}{\frac{\cd W^{\frac{3}{2}}}{W^{\frac{3}{2}}}}\\
%={}&\frac{(\pd_t-\Delta)\vert\cd{A}\vert^2}{W^{\frac{3}{2}}}-\frac{3}{2}\frac{\vert\cd{A}\vert^2}{W^\frac{5}{2}}(\pd_t-\Delta)W+\frac{3}{4}\frac{\vert\cd{A}\vert^2}{W^{\frac{5}{2}}}\frac{\vert \cd W\vert^2}{W}\\
%{}&+3\inner{\frac{\cd\vert\cd A\vert^2}{W^{\frac{3}{2}}}-\frac{3}{2}\frac{\vert\cd A\vert^2}{W^{\frac{5}{2}}}\cd W}{\frac{\cd W}{W}}\\
%\le{}&\frac{cW\vert\cd A\vert^2-2\vert\cd^2A\vert^2}{W^{\frac{3}{2}}}-3a\frac{\vert\cd{A}\vert^2}{W^\frac{5}{2}}\lsb(\vert A\vert^2+nK)H^2-\vert\cd H\vert^2\rsb\\
%{}&-\frac{15}{4}\frac{\vert\cd{A}\vert^2}{W^{\frac{5}{2}}}\frac{\vert \cd W\vert^2}{W}+6\frac{\vert\cd^2 A\vert\vert\cd A\vert\vert\cd W\vert}{W^{\frac{5}{2}}}\\
%\le{}&c\frac{\vert\cd A\vert^2}{W^{\frac{5}{2}}}\left(W^2+\vert\cd H\vert^2\right)-\frac{\vert \cd^2A\vert^2}{W^{\frac{3}{2}}}\\
\leq{}&\frac{c_2H^2W^3+C_2K^4\mathrm{e}^{-2\delta Kt}}{W^{\frac{5}{2}}}-\frac{\vert\cd^2{A}\vert^2}{W^\frac{3}{2}}\,,
\eann
and
\bann
(\pd_t-\Delta)\frac{\vert\cd{A}\vert^2}{W^{\frac{7}{2}}}%={}&\frac{(\pd_t-\Delta)\vert\cd{A}\vert^2}{W^{\frac{7}{2}}}-\frac{\vert\cd{A}\vert^2}{W^7}(\pd_t-\Delta)W^{\frac{7}{2}}+2\inner{\cd\frac{\vert\cd A\vert^2}{W^{\frac{7}{2}}}}{\frac{\cd W^{\frac{7}{2}}}{W^{\frac{7}{2}}}}\\
%={}&\frac{(\pd_t-\Delta)\vert\cd{A}\vert^2}{W^{\frac{7}{2}}}-\frac{7}{2}\frac{\vert\cd{A}\vert^2}{W^\frac{9}{2}}(\pd_t-\Delta)W+\frac{35}{4}\frac{\vert\cd{A}\vert^2}{W^{\frac{9}{2}}}\frac{\vert \cd W\vert^2}{W}\\
%{}&+7\inner{\frac{\cd\vert\cd A\vert^2}{W^{\frac{7}{2}}}-\frac{7}{2}\frac{\vert\cd A\vert^2}{W^{\frac{9}{2}}}\cd W}{\frac{\cd W}{W}}\\
%\le{}&\frac{cW\vert\cd A\vert^2-2\vert\cd^2A\vert^2}{W^{\frac{7}{2}}}-7a\frac{\vert\cd{A}\vert^2}{W^\frac{9}{2}}\lsb(\vert A\vert^2+nK)H^2-\vert\cd H\vert^2\rsb\\
%{}&-\frac{63}{4}\frac{\vert\cd{A}\vert^2}{W^{\frac{9}{2}}}\frac{\vert \cd W\vert^2}{W}+14\frac{\vert\cd^2 A\vert\vert\cd A\vert\vert\cd W\vert}{W^{\frac{9}{2}}}\\
\le{}&c\frac{\vert\cd A\vert^2}{W^{\frac{9}{2}}}\left(W^2+\vert\cd H\vert^2\right)-\frac{\vert \cd^2A\vert^2}{W^{\frac{7}{2}}}\\
\leq{}&\frac{c_3H^2W^3+C_3K^4\mathrm{e}^{-2\delta Kt}}{W^{\frac{9}{2}}}-\frac{\vert\cd^2{A}\vert^2}{W^\frac{7}{2}}\,,
\eann
where $c_2$ and $c_3$ depend only on $n$, $\alpha$, and $\Theta$, and $C_2$ and $C_3$ depend also on $V$.

Setting
\[
f:= \frac{\vert\cd^2{A}\vert^2}{W^{\frac{5}{2}}}+c_1\frac{\vert\cd{A}\vert^2}{W^{\frac{3}{2}}} +C_1K^2\frac{\vert\cd{A}\vert^2}{W^{\frac{7}{2}}}
\]
and estimating $W\ge K$, we obtain
\bann
(\pd_t-\Delta)f\leq{}&\frac{c_1H^4W^2+C_1K^4\mathrm{e}^{-2\delta Kt}}{W^{\frac{5}{2}}}+c_1\frac{c_2H^2W^3+C_2K^4\mathrm{e}^{-2\delta Kt}}{W^{\frac{5}{2}}}\\
{}&+C_1K^2\frac{c_3H^2W^3+C_3K^4\mathrm{e}^{-2\delta Kt}}{W^{\frac{9}{2}}}\\
\le{}&\frac{(c_1a^2+c_1c_2+c_3C_1)H^2W^3+(C_1+c_1C_2+C_1C_3)K^4\mathrm{e}^{-2\delta Kt}}{W^{\frac{5}{2}}}\\
\le{}&(c_1a^2+c_1c_2+c_3C_1)H^2W^{\frac{1}{2}}+(C_1+c_1C_2+C_1C_3)K^\frac{3}{2}\mathrm{e}^{-2\delta Kt}\\
\le{}&c_4(\vert A\vert^2+nK)H+C_4K^\frac{3}{2}\mathrm{e}^{-2\delta Kt}\,.
\eann
Thus,
\bann
(\pd_t-\Delta)\lb f-c_4H+\frac{C_4}{2\delta}K^{\frac{1}{2}}\mathrm{e}^{-2\delta Kt}\rb\leq{}&0\,.
\eann
The maximum principle and Proposition \ref{prop:class C universal interior estimates} then yield
\bann
\max_{\M\times\{t\}}(f-c_4H)\leq{}&\max_{\M\times\{\lambda_0K^{-1}\}}(f-c_4H)+\frac{C_4}{2\delta}K^{\frac{1}{2}}\lb\mathrm{e}^{-2\delta \lambda_0}-\mathrm{e}^{-2\delta Kt}\rb\\
\leq{}&C_5K^{\frac{1}{2}}
\eann
for all $t\geq \lambda_0K^{-1}$, where $C_5$ depends only on $n$, $\alpha$, $V$, and $\Theta$. We conclude that
\[
\vert\cd^2{A}\vert^2\leq cHW^{\frac{5}{2}}+CK^{\frac{1}{2}}W^{\frac{5}{2}}\;\;\text{in}\;\; \M\times [\lambda_0K^{-1},T)\,,
\]
where $c$ and $C$ depend only on $n$, $\alpha$, $V$, and $\Theta$. The claim now follows from Young's inequality.
\end{proof}

Applying the Hessian estimate in conjunction with the the evolution equation \eqref{eq_sff} for $A$ yields an analogous bound for $\cd_t A$, and hence, in particular, for the time derivative of $H$. Thus, in high curvature regions, we obtain the following a priori bounds for $\cd H$ and $\pd_t H$.

\begin{corollary}\label{cor:spacetime grad H bound}
Let $X:\M\times[0,T)\to\mathbb{S}_K^{n+1}$, $n\geq 2$, be a solution to mean curvature flow with initial condition in the class $\mathcal{C}_K^n(\alpha,V,\Theta)$. There exist $h_\sharp=h_\sharp(n,\alpha,V,\Theta)$ and $c_\sharp=c_\sharp(n,\alpha,V,\Theta)$ such that
\begin{equation}
H(x,t)\geq h_\sharp \sqrt K
\;\implies \;
%\;\; \text{implies} \;\;
\frac{\vert \cd H\vert}{H^2}(x,t)\leq c_\sharp\;\;\text{and}\;\;\frac{\vert \pd_tH\vert}{H^3}(x,t)\leq \frac{c_\sharp^2}{2}.
\end{equation}
\end{corollary}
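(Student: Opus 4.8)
The plan is to read the corollary off directly from the pointwise derivative bounds already established, namely the crude gradient estimate \eqref{eq:gradient estimate eta=1}, the Hessian estimate \eqref{eq:scale_invariant_Hessian_estimate}, the quadratic pinching bound $\vert A\vert^2\le\frac{1}{n-2+\alpha}H^2+2(2-\alpha)K$, and the evolution equation \eqref{eq_sff}, all restricted to the region $\{H\ge h_\sharp\sqrt K\}$. The first step is to fix $h_\sharp$ so that this region lies in the part of space-time on which the pointwise derivative estimates are valid. By \eqref{eq:curvature_bounded_gradient_estimate}, $H^2\le n\vert A\vert^2\le n\Lambda_0 K$ on $\M\times[0,\lambda_0K^{-1}]$; hence, if $h_\sharp\ge 1$ and $h_\sharp^2>n\Lambda_0$ (so that $h_\sharp=h_\sharp(n,\alpha,\Theta)$), then any $(x,t)$ with $H(x,t)\ge h_\sharp\sqrt K$ must have $t\ge\lambda_0K^{-1}$, where Theorems \ref{thm:gradient estimate} and \ref{thm:Hessian estimate} apply.

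For the spatial gradient I would combine the Kato inequality \eqref{ineqn_Kato}, in the form $\vert\cd H\vert^2\le\frac{n+2}{3}\vert\cd A\vert^2$, with \eqref{eq:gradient estimate eta=1} to get $\vert\cd H\vert^2\le C(H^4+K^2)$ on $\M\times[\lambda_0K^{-1},T)$, with $C=C(n,\alpha,V,\Theta)$. On $\{H\ge h_\sharp\sqrt K\}$ we have $K^2\le h_\sharp^{-4}H^4\le H^4$, so $\vert\cd H\vert^2/H^4\le 2C$ there, i.e. $\vert\cd H\vert/H^2\le a_0$ with $a_0:=(2C)^{1/2}$.

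For the time derivative I would rewrite \eqref{eq_sff} as $\cd_tA=\Delta A+(\vert A\vert^2-nK)A+2KHg$ and take the $g$-trace; since the induced metric is $\cd_t$-parallel, $\pd_tH=\tr_g\cd_tA$, and therefore $\vert\pd_tH\vert\le\sqrt n\,\vert\cd_tA\vert\le\sqrt n\,\vert\Delta A\vert+\sqrt n\,(\vert A\vert^2+nK)\vert A\vert+2nK\vert H\vert$. I would then bound $\vert\Delta A\vert\le\sqrt n\,\vert\cd^2A\vert$ and apply the Hessian estimate \eqref{eq:scale_invariant_Hessian_estimate} together with $(H^6+K^3)^{1/2}\le\vert H\vert^3+K^{3/2}$, and use the pinching bound to get $\vert A\vert\le C(\vert H\vert+K^{1/2})$ and $\vert A\vert^2+nK\le C(H^2+K)$. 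Since $K^{3/2}$, $H^2K^{1/2}$ and $K\vert H\vert$ are each $\le\vert H\vert^3+K^{3/2}$, every term on the right is $\le C'(\vert H\vert^3+K^{3/2})$ with $C'=C'(n,\alpha,V,\Theta)$, whence $\vert\pd_tH\vert\le C''(\vert H\vert^3+K^{3/2})$. Absorbing $K^{3/2}\le h_\sharp^{-3}\vert H\vert^3\le\vert H\vert^3$ on $\{H\ge h_\sharp\sqrt K\}$ gives $\vert\pd_tH\vert/H^3\le b_0$ with $b_0:=2C''$. Finally I would set $c_\sharp:=\max\{a_0,(2b_0)^{1/2}\}$; then $\vert\cd H\vert/H^2\le a_0\le c_\sharp$ and $\vert\pd_tH\vert/H^3\le b_0\le c_\sharp^2/2$, which is exactly the assertion.

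There is no real obstacle here: the argument is a synthesis of estimates already proved, with careful bookkeeping of powers of $K$. The two points needing attention are (i) the choice of $h_\sharp$ (via $\Lambda_0$, hence through $n,\alpha,\Theta$) ensuring the high-curvature set lies in the slab $[\lambda_0K^{-1},T)$ on which the pointwise derivative estimates hold, and (ii) the final choice of $c_\sharp$ so that both normalized bounds hold simultaneously with the prescribed coupling $\vert\pd_tH\vert/H^3\le c_\sharp^2/2$ (it is precisely this coupled form, fed into Lemma \ref{lem:consequence of gradient estimate}, that is used in the surgery construction). When $n=3$ the Hessian estimate has not been recorded as a standalone statement, but the proof of Theorem \ref{thm:Hessian estimate} goes through verbatim for $\alpha>\tfrac23$, so the same argument applies.
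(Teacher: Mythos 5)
Your argument is correct and is precisely the synthesis the paper intends: the authors give no written proof beyond the sentence preceding the corollary ("Applying the Hessian estimate in conjunction with the evolution equation for $A$\dots"), and your route — Kato plus the crude gradient estimate \eqref{eq:gradient estimate eta=1} for $\vert\cd H\vert$, the Hessian estimate plus \eqref{eq_sff} and pinching for $\vert\pd_tH\vert$, with $h_\sharp$ chosen via \eqref{eq:curvature_bounded_gradient_estimate} to force $t\ge\lambda_0K^{-1}$ and to absorb the $K$ terms — fills it in exactly as intended. Your closing remarks on the choice of $h_\sharp$ and on the $n=3$ case of the Hessian estimate are apt and address the only points of genuine care.
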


This is a very useful estimate in light of the following `parabolic' version of Lemma \ref{lem:consequence of gradient estimate}.

\begin{lemma}\label{lem:integrate the gradient estimate (parabolic)}
Let $X:\M\times[0,T)\to\mathbb S_K^{n+1}$ be a solution to mean curvature flow. If, given $c_\sharp<\infty$,
\[
\frac{\vert \cd H\vert}{H^2}\leq c_\sharp\;\;\text{ and }\;\;\frac{\vert \pd_tH\vert}{H^3}\leq \frac{c_\sharp^2}{2}\,,
\]
then
\begin{equation}
\frac{H(p,t)}{10}\leq H(q,s)\leq 10H(p,t)
\end{equation}
for all $(q,s)\in \mathcal{P}_{\frac{1}{10c_\sharp H(p,t)}}(p,t)$, the intrinsic parabolic cylinder in $\M\times [0,T)$ of radius $\frac{1}{10c_\sharp H(p,t)}$ about $(p,t)$.
\end{lemma}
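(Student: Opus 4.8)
The plan is to imitate the proof of Lemma \ref{lem:consequence of gradient estimate}, but now integrating along curves in spacetime rather than purely spatial geodesics. The parabolic cylinder $\mathcal{P}_\rho(p,t)$ with $\rho=\frac{1}{10c_\sharp H(p,t)}$ should be understood as the set of $(q,s)$ with $\dist_{s}(q, p)\le \rho$ (intrinsic distance in the time-$s$ slice, or more conveniently we bound the distance traversed by a fixed path) and $|s-t|\le \rho^2$. To prove the two-sided bound $\frac{1}{10}H(p,t)\le H(q,s)\le 10 H(p,t)$ it suffices, by symmetry of the roles of $(p,t)$ and $(q,s)$ (the hypotheses are symmetric and $\rho$ only changes by a bounded factor once we have a one-sided bound), to establish $\frac{1}{10}H(p,t)\le H(q,s)$ for all such $(q,s)$; the upper bound follows by running the same argument from $(q,s)$.

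First I would control the variation of $H$ in the time direction. Fix a point $q$ and consider $h(s):=H(q,s)$. The hypothesis $\frac{|\pd_t H|}{H^3}\le \frac{c_\sharp^2}{2}$ gives $\left|\frac{d}{ds}H^{-2}(q,s)\right| = \frac{2|\pd_t H|}{H^3}\le c_\sharp^2$, so integrating over an interval of length $\le\rho^2$ yields $|H^{-2}(q,s)-H^{-2}(q,t)|\le c_\sharp^2\rho^2 = \frac{1}{100 H(p,t)^2}$ provided $H(q,t)$ is already comparable to $H(p,t)$. Second, I would control the spatial variation exactly as in Lemma \ref{lem:consequence of gradient estimate}: along a unit-speed geodesic in the slice $\{t\}$ of length $\le\rho$, the bound $\frac{|\cd H|}{H^2}\le c_\sharp$ gives $|\cd_{\gamma'}H^{-1}|\le c_\sharp$, hence $|H^{-1}(q,t)-H^{-1}(p,t)|\le c_\sharp\rho = \frac{1}{10H(p,t)}$. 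Combining the two estimates along a broken path (first move spatially in the slice $\{t\}$, then move in time) gives, after a short computation, $H(q,s)\ge \frac{1}{10}H(p,t)$; one just has to chase the constants, being slightly careful that the intermediate values of $H$ stay within a controlled range so that the time-integration step is legitimate (a standard continuity/bootstrap argument: the set of $s$ for which the comparison holds is open, closed, and nonempty).

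The only mild subtlety — and the step I'd flag as requiring the most care — is the order of quantifiers in the continuity argument for the time direction: the bound $|H^{-2}(q,s)-H^{-2}(q,t)|\le c_\sharp^2|s-t|$ is an honest consequence of the pointwise hypothesis and needs no bootstrapping, since the hypothesis $\frac{|\pd_t H|}{H^3}\le\frac{c_\sharp^2}{2}$ is assumed to hold everywhere on $\M\times[0,T)$, not just in a high-curvature region. (The high-curvature restriction is imposed in Corollary \ref{cor:spacetime grad H bound}, which feeds this lemma, but within this lemma the hypotheses are taken globally.) So in fact no bootstrap is needed: integrate the spatial estimate to get $\frac{1}{2}H(p,t)\le H(q,t)\le 2H(p,t)$ for $\dist_t(q,p)\le\rho\le\frac{1}{2c_\sharp H(p,t)}$, then integrate the temporal estimate from time $t$ to time $s$ with $|s-t|\le\rho^2$ to pick up at most a further factor of $2$ in each direction, giving the stated bound with room to spare (the constant $10$ is deliberately generous). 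The proof is then a two- or three-line computation tracking these constants, and I would present it essentially verbatim from the pattern of Lemma \ref{lem:consequence of gradient estimate} with the extra temporal integration appended.
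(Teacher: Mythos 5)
Your proposal is correct and follows essentially the same route as the paper: integrate $\vert\cd H^{-1}\vert\le c_\sharp$ spatially in the time-$t$ slice, then integrate $\vert(H^{-2})'\vert\le c_\sharp^2$ in time, and chase the (generous) constants; your observation that no bootstrap is needed because the hypotheses are global is exactly right. The only cosmetic difference is that the paper runs the argument with a parameter $\gamma$ and obtains explicit two-sided bounds of the form $H(p,t)/\sqrt{\gamma^{-2}+c_\sharp^2H^2(p,t)r^2}\le H(q,s)\le H(p,t)/\sqrt{\gamma^2-c_\sharp^2H^2(p,t)r^2}$ before specializing $\gamma=1/2$, rather than quoting "a further factor of $2$".
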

\begin{proof}
Fix $\gamma\in[\frac{1}{2},1)$. As in Lemma \ref{lem:consequence of gradient estimate}, given any $r\leq\frac{1-\gamma}{c_\sharp H(p,t)}$,
\bann%\label{eq:H bound at time t}
\gamma H(p,t)\leq \frac{H(p,t)}{1+c_\sharp H(p,t)r}\leq H(q,t)\leq \frac{H(p,t)}{1-c_\sharp H(p,t)r}\leq \gamma^{-1}H(p,t)
\eann
for all $q\in \mathcal{B}_{r}(p,t)$, the $g_t$-intrinsic ball of radius $r$ about the point $p$. Given $q\in \mathcal{B}_{r}(p,t)$, set $h(t):= H(p,t)$. Then
\[
-c_\sharp^2\leq (h^{-2})'(s)\leq c_\sharp^2\,.
\]
Since $r\leq \frac{1-\gamma}{c_\sharp H(p,t)}\leq \frac{\gamma}{c_\sharp H(p,t)}\leq \frac{1}{c_\sharp H(q,t)}$, integrating between $s\in (t-r^2,t]$ and $t$ yields
\[
\frac{H(q,t)}{\sqrt{1-c_\sharp^2H^2(q,t)r^2}}\leq H(q,s)\leq \frac{H(q,t)}{\sqrt{1-c_\sharp^2H^2(q,t)r^2}}
\]
and hence
\[
\frac{H(p,t)}{\sqrt{\gamma^{-2}+c_\sharp^2H^2(p,t)r^2}}\leq H(q,s)\leq \frac{H(p,t)}{\sqrt{\gamma^2-c_\sharp^2H^2(p,t)r^2}}
\]
for all $(q,s)\in \mathcal{P}_r(p,t)$, so long as $r\leq \frac{\gamma}{c_\sharp H(p,t)}$. The claim follows upon choosing, say, $\gamma=1/2$.
\end{proof}

An inductive argument, exploiting estimates for lower order terms in the evolution equations for higher derivatives of $A$ as in Theorem \ref{thm:Hessian estimate}, can be applied to obtain estimates for spatial derivatives of $A$ to all orders. The evolution equation for $A$ then yields bounds for the mixed space-time derivatives (cf. \cite[Theorem 6.3 and Corollary 6.4]{HuSi09}). We state these estimates here, however they will not actually be needed in the construction of the surgically modified flows.

\begin{theorem}[Higher-order estimates]\label{thm:higher order estimates}
Let $X:\M\times[0,T)\to\mathbb{S}_K^{n+1}$, $n\geq 4$, be a solution to mean curvature flow with initial condition in the class $\mathcal{C}_K^n(\alpha,V,\Theta)$. There exist, for each pair of non-negative integers $k$ and $\ell$, constants $C_{k,\ell}=C_{k,\ell}(k,\ell,n,\alpha,V,\Theta)$ such that
\ba\label{eq:scale_invariant_higher_derivative_estimates}
\vert\cd_t^k\cd^\ell {A}\vert^2\leq C_{k,\ell}(H^{2+4k+2\ell}+K^{1+2k+\ell})\;\;\text{in}\;\; \M \times[\lambda_0K^{-1}\!,T).
\ea
\end{theorem}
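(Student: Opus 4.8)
The plan is to prove the higher-order estimates \eqref{eq:scale_invariant_higher_derivative_estimates} by induction on the total order $k+\ell$ of derivatives, following the strategy already executed in Theorem~\ref{thm:Hessian estimate} but iterated. The base cases $(k,\ell)=(0,0)$ and $(k,\ell)=(0,1),(0,2)$ are exactly the content of \eqref{eq:curvature_bounded_gradient_estimate} (via the class $\mathcal{C}_K^n$ and Proposition~\ref{prop:class C universal interior estimates}), Theorem~\ref{thm:gradient estimate}, and Theorem~\ref{thm:Hessian estimate} respectively. For the inductive step, I would first reduce the mixed space-time estimates to purely spatial ones: the temporal Codazzi identity \eqref{eq:temporal Codazzi}, $\cd_t A=\cd^2 H+HA^2+KHg$, together with the identity \eqref{eq:evolve derivatives of A} for $(\cd_t-\Delta)(\cd^m A)$, lets one express any $\cd_t^k\cd^\ell A$ in terms of spatial derivatives $\cd^j A$ with $j\le 2k+\ell$ and lower-order products; so it suffices to prove the pure spatial estimate $\vert\cd^\ell A\vert^2\le C_\ell(H^{2+2\ell}+K^{1+\ell})$ for all $\ell$, and then read off the mixed estimate by substitution (the scaling $H^{2+4k+2\ell}+K^{1+2k+\ell}$ is exactly what one gets after replacing one $\cd_t$ by two $\cd$'s).

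To prove the spatial estimate for $\cd^\ell A$, I would proceed inductively exactly as in the proof of Theorem~\ref{thm:Hessian estimate}. Assume the estimates hold for all orders up to $\ell-1$. From \eqref{eq:evolve derivatives of A} and Young's inequality one obtains
\begin{align*}
(\pd_t-\Delta)\vert\cd^\ell A\vert^2\le{}&-2\vert\cd^{\ell+1}A\vert^2+c\,W\vert\cd^\ell A\vert^2+c\!\!\sum_{\substack{i+j=\ell\\ i,j\ge 1}}\!\!\vert\cd^i A\vert^2\vert\cd^j A\vert^2\,,
\end{align*}
where $c=c(n,\ell)$ and $W=aH^2+bK$ dominates $\vert A\vert^2+nK$. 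The cross terms $\vert\cd^iA\vert^2\vert\cd^jA\vert^2$ with $i,j\le\ell-1$ are controlled by the inductive hypothesis by a multiple of $(H^{2+2i}+K^{1+i})(H^{2+2j}+K^{1+j})\le C(H^{2+2\ell}W + K^{2}W^{\ell})$, hence by $CH^2W^{\ell+1}+CK^{\ell+2}\mathrm{e}^{-2\delta Kt}$-type quantities after absorbing powers of $W$. One then studies the quotient $\vert\cd^\ell A\vert^2/W^{\ell+\frac12}$: differentiating, the leading $-\frac{(2\ell+1)}{2}a\frac{\vert\cd^\ell A\vert^2}{W^{\ell+\frac32}}[(\vert A\vert^2+nK)H^2-\vert\cd H\vert^2]$ term and the good negative $-\vert\cd^{\ell+1}A\vert^2$ term absorb the cross term $5(\ell+\tfrac12)W^{-\ell-\frac32}\langle\cd\vert\cd^\ell A\vert^2,\cd W\rangle$ via the Cauchy--Schwarz trick used in Theorem~\ref{thm:Hessian estimate} (split off $\vert\cd^{\ell+1}A\vert\vert\cd^\ell A\vert\vert\cd W\vert$ using $\vert\cd W\vert^2/W\le 4a\vert\cd H\vert^2$). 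After also inserting the gradient estimate \eqref{eq:gradient estimate eta=1} for the $\vert\cd A\vert^4$-type remainders, one arrives at a differential inequality of the schematic form $(\pd_t-\Delta)\Phi_\ell\le c_\ell(\vert A\vert^2+nK)H+C_\ell K^{3/2}\mathrm{e}^{-2\delta Kt}$ for a suitable combination $\Phi_\ell$ of $\vert\cd^\ell A\vert^2/W^{\ell+\frac12}$ with lower-order quotients (weighted by the previously-fixed constants, as in the definition of $f$ in Theorem~\ref{thm:Hessian estimate}). Then $(\pd_t-\Delta)(\Phi_\ell-c_\ell H+\frac{C_\ell}{2\delta}K^{1/2}\mathrm{e}^{-2\delta Kt})\le 0$, and the maximum principle together with the interior estimate \eqref{eq:class C universal interior estimates} at time $\lambda_0K^{-1}$ bounds $\Phi_\ell-c_\ell H$ by a constant $C_\ell K^{1/2}$, giving $\vert\cd^\ell A\vert^2\le cHW^{\ell+\frac12}+CK^{1/2}W^{\ell+\frac12}$, whence \eqref{eq:scale_invariant_higher_derivative_estimates} for $(0,\ell)$ by Young's inequality and $W\le aH^2+bK$.

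The main obstacle is purely bookkeeping: as $\ell$ grows the number of cross terms $\cd^iA\ast\cd^jA\ast\cd^kA$ in \eqref{eq:evolve derivatives of A} proliferates, and one must check that every such term, when the inductive hypothesis is substituted, produces only quantities of the form $H^2W^{\ell+1}$ or $K^{\ell+2}\mathrm{e}^{-2\delta Kt}$ (after dividing by $W^{\ell+\frac12}$) so that it can be absorbed by the good gradient term $\vert\cd^{\ell+1}A\vert^2$ or the reaction term. This is where the requirement $n\ge 4$ enters (it is already needed for Theorem~\ref{thm:Hessian estimate}, via the Kato-type inequality that makes $\beta>0$ in \eqref{eq:beta in grad est} and ensures the cylindrical estimate has enough room). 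I would organize the induction so that the exponent on $W$ in the denominator of the quotient for $\cd^\ell A$ is always chosen to make the weighted term $-\,(\text{const})\,W^{-1}\vert\cd^\ell A\vert^2\vert\cd H\vert^2$ large enough to swallow the coupling to $\cd W$, exactly as the exponent $\frac52$ was chosen for $\cd^2 A$; the rest is a routine but lengthy iteration of the Theorem~\ref{thm:Hessian estimate} argument, which is why the paper states the result without carrying out the computation. Finally, the mixed estimate follows by writing $\cd_t^k\cd^\ell A$ via repeated use of \eqref{eq:temporal Codazzi} and \eqref{eq:evolve derivatives of A} as a sum of terms $\cd^{j_1}A\ast\cdots\ast\cd^{j_m}A$ (plus curvature-of-ambient contributions $K\ast\cd^j A$) with $\sum j_i\le 2k+\ell$ and $m\le 2k+1$, and applying the just-proved spatial bounds term by term, keeping track of the powers of $H$ and $K$ to land on $H^{2+4k+2\ell}+K^{1+2k+\ell}$.
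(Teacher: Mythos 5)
Your proposal is correct and follows exactly the route the paper itself indicates (the paper only sketches this proof): induction on the order of spatial derivatives by iterating the weighted-quotient maximum-principle argument of Theorem~\ref{thm:Hessian estimate} applied to \eqref{eq:evolve derivatives of A}, followed by conversion of time derivatives into spatial ones via the evolution equation for $A$, with the scaling $H^{2+4k+2\ell}+K^{1+2k+\ell}$ arising from replacing each $\cd_t$ by two spatial derivatives. The only caveat is the bookkeeping of the cross terms $\cd^iA\ast\cd^jA\ast\cd^kA$, which you correctly identify and which the inductive hypothesis handles.
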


\subsection{Neck detection}

The cylindrical and gradient estimates imply that, in regions of very high curvature, solutions either form high quality `neck' regions, or else become locally uniformly convex. 

\begin{lemma}[Curvature necks (cf. {\cite[Lemma 7.4]{HuSi09}})]\label{lem:curvature neck detection}
Let $X:\M\times[0,T)\to\mathbb{S}_K^{n+1}$ be a solution to mean curvature flow with initial condition in the class $\mathcal{C}^n_K(\alpha,V,\Theta)$. Given $\varepsilon\leq \frac{1}{100}$, there exist parameters $\eta_\sharp=\eta_\sharp(n,\alpha,V,\Theta,\varepsilon)>0$ and $h_\sharp=h_\sharp(n,\alpha,V,\Theta,\varepsilon)<\infty$ with the following property. If
\bann
H(p_0,t_0)\ge h_\sharp\sqrt K\;\;\text{ and }\;\; \lambda_1(p_0,t_0)\leq \eta_\sharp H(p_0,t_0)\,,
\eann
then %$(x_0,t_0)$ lies at the center of a \emph{shrinking curvature neck of quality $\varepsilon$}. That is,
\[
\Lambda_{r_0,k,\varepsilon}(p_0,t_0)\leq \varepsilon r_0^{k+1}%\left(r_0^2+2(n-m)(t_0-t)\right)^{\frac{k+1}{2}}
\]
for each $k=0,\dots,\lfloor\frac{2}{\varepsilon}\rfloor$, where $r_0:= \frac{n-1}{H(p_0,t_0)}$,
\[
\Lambda_{r,0,\varepsilon}(p,t):= \max_{\mathcal{B}_{\varepsilon^{-1}r}(p,t)\times(t-10^4r^2,t]}\sqrt{\lambda_1^2+\sum_{j=2}^n(\lambda_n-\lambda_j)^2}\,,
\]
and, for each $k\geq 1$,
\[
\Lambda_{r,k,\varepsilon}(p,t):= \max_{\mathcal{B}_{\varepsilon^{-1}r}(p,t)\times(t-10^4r^2,t]}\vert \cd^k{A}\vert\,.
\]
\end{lemma}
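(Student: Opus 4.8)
The plan is to combine the cylindrical estimate (Theorem \ref{thm:cylindrical estimate}), the pointwise derivative estimates (Theorems \ref{thm:gradient estimate}, \ref{thm:Hessian estimate}, and more generally Theorem \ref{thm:higher order estimates}), and the Harnack-type control on $H$ along parabolic neighborhoods (Corollary \ref{cor:spacetime grad H bound} together with Lemma \ref{lem:integrate the gradient estimate (parabolic)}). The key mechanism is this: a bound on $\lambda_1/H$ at the center point $(p_0,t_0)$, combined with the cylindrical pinching $|A|^2 - \tfrac{1}{n-1}H^2 \le \eta H^2 + C_\eta K e^{-2\delta K t}$, forces $\lambda_1$ to be small \emph{and} forces the remaining principal curvatures $\lambda_2,\dots,\lambda_n$ to be close to one another (all close to $H/(n-1)$). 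One then propagates this pinching from the single spacetime point $(p_0,t_0)$ to the whole parabolic neighborhood $\mathcal B_{\varepsilon^{-1}r_0}(p_0,t_0)\times(t_0-10^4 r_0^2, t_0]$ using the Harnack control on $H$, and uses the higher derivative estimates to control $|\cd^k A|$ there.

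\textbf{Step 1 (choice of parameters and propagation of high curvature).} Given $\varepsilon \le \tfrac1{100}$, first fix $\eta = \eta(n,\alpha,\varepsilon)>0$ small (to be pinned down in Step 2) in the cylindrical estimate, obtaining $C_\eta = C_\eta(n,\alpha,V,\Theta,\eta)$. Using Corollary \ref{cor:spacetime grad H bound} we obtain $h_\sharp^{(0)}$ and $c_\sharp$ so that $H \ge h_\sharp^{(0)}\sqrt K$ implies $|\cd H|/H^2 \le c_\sharp$ and $|\pd_t H|/H^3 \le c_\sharp^2/2$. Choosing $h_\sharp$ large enough (depending on $n,\alpha,V,\Theta,\varepsilon$) that $h_\sharp \ge 2 c_\sharp^{-1}\cdot 10 \varepsilon^{-1}(n-1) \cdot h_\sharp^{(0)}$ — i.e. so that the parabolic cylinder of radius $\varepsilon^{-1}r_0 = \varepsilon^{-1}(n-1)/H(p_0,t_0)$ is contained in the cylinder $\mathcal P_{1/(10 c_\sharp H(p_0,t_0))}(p_0,t_0)$ on which Lemma \ref{lem:integrate the gradient estimate (parabolic)} applies — we conclude that $\tfrac{1}{10}H(p_0,t_0) \le H(q,s) \le 10 H(p_0,t_0)$, and in particular $H(q,s) \ge h_\sharp^{(0)}\sqrt K$, throughout $\mathcal B_{\varepsilon^{-1}r_0}(p_0,t_0)\times(t_0-10^4 r_0^2, t_0]$. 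Choosing $h_\sharp$ larger still we also arrange $C_\eta K e^{-2\delta K t_0} \le C_\eta K \le \tfrac{\eta}{100} H(q,s)^2$ on this neighborhood (since $H(q,s)^2 \ge \tfrac{1}{100}h_\sharp^2 K$), so the cylindrical estimate reads $|A|^2 - \tfrac{1}{n-1}H^2 \le 2\eta H^2$ there.

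\textbf{Step 2 (the pointwise linear-algebra estimate).} The core estimate, applied at each $(q,s)$ in the neighborhood, is: if a symmetric matrix with eigenvalues $\lambda_1\le\dots\le\lambda_n$ and $H = \sum\lambda_i > 0$ satisfies both $|A|^2 - \tfrac{1}{n-1}H^2 \le 2\eta H^2$ and $\lambda_1 \le \eta_\sharp H$, then $\lambda_1^2 + \sum_{j=2}^n (\lambda_n - \lambda_j)^2 \le \Phi(n,\eta,\eta_\sharp) H^2$ with $\Phi \to 0$ as $\eta,\eta_\sharp \to 0$. This is elementary: write $\lambda_j = \tfrac{H}{n-1} + \mu_j$ for $j=2,\dots,n$, note $\sum_{j\ge 2}\mu_j = -\lambda_1$, and expand
\[
|A|^2 - \tfrac{1}{n-1}H^2 = \lambda_1^2 - \tfrac{2}{n-1}\lambda_1 H + \sum_{j=2}^n \mu_j^2 + \tfrac{1}{n-1}\lambda_1^2 \cdot\tfrac{1}{?}
\]
— more carefully, a direct computation (cf. the identity \eqref{eqn_conv} in the commented-out section) gives $|A|^2 - \tfrac1{n-1}H^2 = -2\lambda_1\lambda_2 + (\lambda_1+\lambda_2 - \tfrac{H}{n-1})^2 + \sum_{l\ge3}(\lambda_l - \tfrac{H}{n-1})^2$, from which, using $\lambda_1\le\eta_\sharp H$ and positivity of $H$, one bounds each $(\lambda_l - \tfrac{H}{n-1})^2$ and $\lambda_1^2$ by a small multiple of $H^2$; then $\sum_{j\ge2}(\lambda_n-\lambda_j)^2 \le 2n\sum_{j\ge2}(\lambda_j - \tfrac{H}{n-1})^2 + 2n(n-1)(\lambda_n - \tfrac H{n-1})^2$ is likewise small. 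Choosing $\eta$ and then $\eta_\sharp$ small enough that $\sqrt{\Phi(n,\eta,\eta_\sharp)}\cdot 10(n-1) \le \varepsilon$ (recalling $H(q,s)\le 10 H(p_0,t_0) = 10(n-1)/r_0$) yields $\Lambda_{r_0,0,\varepsilon}(p_0,t_0) \le \varepsilon r_0^{-1}\cdot r_0^2$... i.e. $\le \varepsilon r_0$, which is the $k=0$ case after converting $H(q,s)$-bounds into $r_0$-bounds.

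\textbf{Step 3 (higher derivatives).} For $k\ge1$, Theorem \ref{thm:higher order estimates} gives $|\cd^k A|^2 \le C_{0,k}(H^{2+2k} + K^{1+k})$ on $\M\times[\lambda_0 K^{-1},T)$; shrinking $h_\sharp$'s role (enlarging it) ensures $t_0 - 10^4 r_0^2 \ge \lambda_0 K^{-1}$ and that $K^{1+k} \le H^{2+2k}$ on the neighborhood, so $|\cd^k A| \le C_k' H^{k+1} \le C_k' (10(n-1))^{k+1} r_0^{-(k+1)}$ there. This is \emph{not} yet $\le \varepsilon r_0^{k+1}$ — rather it is $\lesssim r_0^{-(k+1)}$...

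Here I must correct course: the quantity $\Lambda_{r,k,\varepsilon}$ is compared to $\varepsilon r_0^{k+1}$, which for the derivative terms is the \emph{wrong} scaling unless one rescales. The resolution, as in \cite[Lemma 7.4]{HuSi09}, is that the lemma's parameter $\varepsilon$ plays two different roles and the derivative bounds $\Lambda_{r_0,k,\varepsilon}\le\varepsilon r_0^{k+1}$ for $k=1,\dots,\lfloor 2/\varepsilon\rfloor$ are established by a \emph{separate scaling/compactness or iteration argument}: rescale the flow by $H(p_0,t_0)/(n-1) = r_0^{-1}$ so that the rescaled mean curvature at the center is $n-1$ and the rescaled ambient curvature $K r_0^2 \to 0$; the rescaled flow has bounded geometry on a unit-size parabolic neighborhood by Steps 1–2, and the interior estimates (Theorem \ref{thm:higher order estimates}, now with $K r_0^2$ small) give $|\cd^k A|$ bounded independently of $\varepsilon$ on the rescaled neighborhood. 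One then argues by contradiction: if the conclusion failed for some sequence with $h_\sharp^j\to\infty$, $\eta_\sharp^j\to0$, the rescaled flows would subconverge smoothly to an ancient (or eternal) limit flow in $\R^{n+1}$ which is, by Step 2 in the limit, a shrinking or stationary round cylinder $\R\times\mathbb S^{n-1}$, on which all $\Lambda_{r_0,k,\varepsilon}$ manifestly vanish for $k\ge1$ and the $k=0$ quantity is zero — contradicting the failure of the bounds for large $j$. This compactness argument is standard once the uniform curvature and derivative bounds of Steps 1–2 are in hand.

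\textbf{Main obstacle.} The genuinely delicate point is \textbf{Step 2}: extracting, from the \emph{scalar} pinching inequality plus the single inequality $\lambda_1\le\eta_\sharp H$, the full conclusion that the spectrum is close to that of a round cylinder — i.e. that $\lambda_1$ is small \emph{and} the other eigenvalues are mutually close. One must be careful that $H$ could in principle be small relative to $\sqrt K$; this is precisely why Step 1's lower bound $H(q,s)\ge \tfrac1{10}h_\sharp\sqrt K$, with $h_\sharp$ chosen large, is essential — it makes the $C_\eta K e^{-2\delta Kt}$ error term in the cylindrical estimate negligible compared to $H^2$, so that the clean inequality $|A|^2 - \tfrac1{n-1}H^2\le 2\eta H^2$ is available pointwise on the whole neighborhood. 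The secondary obstacle is bookkeeping the mutual dependence of the constants: $\eta$ is chosen in terms of $n,\alpha,\varepsilon$ first (giving $C_\eta$ depending on $V,\Theta$), then $\eta_\sharp$ in terms of $n,\varepsilon,\eta$, then $h_\sharp$ large in terms of everything; one checks this ordering is non-circular, exactly as in \cite{HuSi09}.
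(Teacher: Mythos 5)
Your overall skeleton --- propagate the curvature scale, rescale by $r_0$, and identify a blow-up limit by compactness --- is the route the paper takes, and your observation about the exponent (the bound should read $\varepsilon r_0^{-(k+1)}$ for dimensional consistency; the paper's rescaled contradiction hypothesis $\hat\Lambda^j_{1,k_0,\varepsilon_0}\ge\varepsilon_0$ confirms this) is correct. However, there are two genuine gaps.

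First, in Step 1 you claim that by enlarging $h_\sharp$ you can arrange that the ball of radius $\varepsilon^{-1}r_0=\varepsilon^{-1}(n-1)/H(p_0,t_0)$ is contained in the Harnack neighbourhood $\mathcal{P}_{1/(10c_\sharp H(p_0,t_0))}(p_0,t_0)$ of Lemma \ref{lem:integrate the gradient estimate (parabolic)}. Both radii are proportional to $1/H(p_0,t_0)$, so their ratio is $10c_\sharp\varepsilon^{-1}(n-1)$ independently of $h_\sharp$; for $\varepsilon\le\frac{1}{100}$ no choice of $h_\sharp$ achieves the containment. The gradient estimates only control $H$ on a neighbourhood of radius comparable to $1/(c_\sharp H)$, which is far smaller than $\varepsilon^{-1}r_0$. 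The paper's proof resolves this by a bootstrap: once the limit on the small neighbourhood is identified as a shrinking cylinder, one knows a posteriori that $H$ did not grow there, re-applies the gradient estimates on a strictly larger neighbourhood, and repeats finitely many times until radius $2\varepsilon^{-1}$ is reached. Without this iteration your neighbourhood is too small.

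Second, and more seriously, Step 2 applies the linear-algebra lemma ``at each $(q,s)$ in the neighbourhood,'' but its hypothesis $\lambda_1\le\eta_\sharp H$ is assumed only at the single point $(p_0,t_0)$. The cylindrical estimate gives the one-sided bound $\lambda_1\ge -c\sqrt{\eta}\,H$ everywhere (almost-convexity), but it does not exclude, say, a nearly round sphere, where $\lambda_1\approx H/n$ at points away from $p_0$; and the gradient estimate $|\cd A|\le CH^2$ propagates $\lambda_1/H$ over distances of order $1/H$ only up to an $O(1)$ error, not a small one. The mechanism that upgrades cylindricality at one point to cylindricality on the whole neighbourhood is rigidity in the blow-up limit: the limit flow satisfies $|\hat A|^2-\frac{1}{n-1}\hat H^2\le 0$ everywhere (so $\hat\lambda_1\ge 0$), $\hat\lambda_1$ vanishes at the origin, and the strong maximum principle/splitting theorem forces a local splitting off a line, after which the pinching forces the cross-section to be umbilic. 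Your Step 3 appeals to ``Step 2 in the limit'' to identify the limit as a cylinder, but Step 2 only yields a cylindrical spectrum at the origin; the splitting argument is the essential missing ingredient and cannot be replaced by pointwise algebra.
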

\begin{proof}
The proof is essentially that of \cite[Lemma 7.4]{HuSi09}. %We incclude the proof here

%\textcolor{red}{\texttt{So we can leave it out if needed}.}

Suppose that the claim does not hold. Then for some $n\geq 3$ there must exist parameters $\alpha$, $V$ and $\Theta$, some $\varepsilon_0<\frac{1}{100}$, a sequence of solutions $X_j:\M_j\times[0,T_j)\to\mathbb{S}_{K_j}^{n+1}$ to mean curvature flow with $X_j(\vts\cdot\vts,0)\in\mathcal{C}^n_{K_j}(\alpha,V,\Theta)$, and points $(p_j,t_j)\in \M_j^n\times[0,T_j)$ such that
\[
%(n-m)r_j^{-1}:= 
H_j(p_j,t_j)\geq (n-1)j\sqrt{K_j}\;\;\text{and}\;\; \frac{\lambda^j_1}{H_j}(p_j,t_j)\leq j^{-1}\,,
\]
and yet
\begin{equation}\label{eq:curvature neck contra1}
\Lambda^j_{r_j,k_j,\varepsilon_0}(p_j,t_j)\geq \varepsilon_0r_j^{k+1}%\left(r_j^2+2(n-m)(t_j-t)\right)^{\frac{k+1}{2}}
\end{equation}
for some $k_j\leq \lfloor\varepsilon_0^{-1}\rfloor$ for each $j$, where $r_j:=\frac{n-1}{H_j(x_j,t_j)}$ and we denote objects defined along $X_j$ using the a sub- or superscript $j$. After passing to a subsequence, we may arrange that \eqref{eq:curvature neck contra1} holds for some fixed $k_j=k_0\leq \lfloor\varepsilon_0^{-1}\rfloor$ for all $j$. After translating the points $(X_j(x_j,t_j),t_j)$ to the space-time origin in $\R^{n+2}\times\R$ and rotating so that the tangent plane to the sphere at the origin is $\R^{n+1}\times\{0\}$ with upward pointing normal, and parabolically rescaling by $r_j$, we obtain a sequence of flows
\[
\hat X_j:\M_j^n\times [-r_j^{-2}t_j,r_j^{-2}(T_j-t_j))\to\mathbb{S}_{r_j^2K_j}^{n+1}-r_j^{-1}K_j^{-\frac{1}{2}}e_{n+2}
\]
given by
\begin{equation}
\hat X_j(x,t):= r_j^{-1}O_j(X_j(x,r_j^{2}t+t_j)-X_j(x_j,t_j))\,,
\end{equation}
where $O_j\in \mathrm{SO}(n+1)$. Each $\hat X_j$ is in the class $\mathcal{C}^n_{r_j^2K_j}(\alpha,V,\Theta)$ and satisfies $\hat X_j(x_j,0)=0$,
\begin{equation}\label{eq:curvature of hat X at origin}
\hat H_j(x_j,0)=n-1\,,\;\text{ and }\;\; \frac{\hat\lambda^j_1}{\hat H_j}(x_j,0)\leq j^{-1} ,
\end{equation}
but
\begin{equation}\label{eq:curvature neck contra2}
\hat \Lambda^j_{1,k_0,\varepsilon_0}(x_j,0)\geq \varepsilon_0\,,
\end{equation}
where we denote objects defined along $\hat X_j$ using a $\hat{(\,\cdot\,)}$ and the sub- or superscript $j$. We claim that the new sequence subconverges locally uniformly in the smooth topology to a shrinking cylinder solution in the Euclidean space $\R^{n+1}\times\{0\}$, in contradiction with \eqref{eq:curvature neck contra2}. First note that, by \eqref{eq:class C universal T bound}, $K_jt_j\ge C(n,\alpha,\Theta)>0$ and hence $-r_j^{-2}t_j\to-\infty$ as $j\to\infty$. We claim that the mean curvature of $\hat X_j$ is uniformly bounded on an intrinsic parabolic cylinder of uniform radius about $(x_j,0)$, so long as $j$ is sufficiently large. Indeed, by Theorems \ref{thm:gradient estimate} and \ref{thm:Hessian estimate}, we can find constants $c_\sharp$ (depending only on $n$, $\alpha$ and $\Theta$) and $C$ (depending only on $n$, $\alpha$, $V$ and $\Theta$) such that
\[
\vert\hat\cd_j\hat H_j\vert\leq \frac{c_\sharp}{2}\hat H_j^2+Cj^{-2}\;\;\text{and}\;\; \vert\pd_t\hat H_j\vert\leq \frac{c_\sharp^2}{4}\hat H_j^3+Cj^{-3}
\]
in $\M_j\times [-r_j^{-2}t_j+j^{-2}/4,0]$. Thus, given any $\rho>0$, we can find $j_0\in \mathbb{N}$ such that
\[
\vert\hat\cd_j\hat H_j\vert\leq c_\sharp\hat H_j^2\;\;\text{and}\;\; \vert\pd_t\hat H_j\vert\leq \frac{c_\sharp^2}{2}\hat H_j^3
\]
in $\M^n_j\times [-\rho,0]$ for $j\geq j_0$. Lemma \ref{lem:integrate the gradient estimate (parabolic)} now implies that
\[
\frac{n-1}{10}=\frac{\hat H_j(x_j,0)}{10}\leq \hat H_j(y,s)\leq 10\hat H_j(x_j,0)=10(n-1)
\]
for any $(y,s)\in \mathcal{P}^j_{\frac{1}{10c_\sharp}}(x_j,0)$ for all sufficiently large $j$. It follows that some subsequence of the restricted mean curvature flows $\hat X_j|_{\mathcal{P}^j_{\frac{1}{10 c_\sharp}}(x_j,0)}$ converges locally uniformly in the smooth topology to a limiting mean curvature flow $\hat X:U\times(-\frac{1}{100c_\sharp^2},0]\to\R^{n+1}\times\{0\}$ (which may not be proper). We claim that the limit flow is part of a shrinking cylinder. We shall denote objects defined along the limit using a $\hat{(\,\cdot\,)}$. Indeed, by the cylindrical estimate (Theorem \ref{thm:cylindrical estimate}), $\hat X$ satisfies
\ba\label{eq:hat X cylindrical}
\vert \hat A\vert^2-\frac{1}{n-1}\hat H^2\le 0\,.
\ea
In particular, $\hat\lambda_1$ is non-negative. On the other hand, by \eqref{eq:curvature of hat X at origin}, $\hat \lambda_1$ vanishes at the origin. Thus, by the splitting theorem, $\hat X$ splits locally off a line. But then \eqref{eq:hat X cylindrical} implies that the cross section of the splitting is umbilic. We need to extend the convergence to a sufficiently large region. This can be achieved since, a posteriori, the mean curvature could not have increased very much in $\mathcal{P}^j_{\frac{1}{10c_\sharp}}$ (due to the convergence to a shrinking cylinder solution). That is,
\[
\hat H_j(y,s)\leq 2(n-1)
\]
for all $(y,s)\in \mathcal{P}_{\frac{1}{10c_\sharp}}(x_j,0)$ so long as $j$ is sufficiently large. Applying the gradient estimates as before, we obtain uniform bounds for $\hat H_j$ on the uniformly larger neighborhood $\mathcal{P}^j_{\frac{2}{10c_\sharp}}(x_j,0)$. Repeating the previous argument, we conclude that a subsequence of the flows $\hat X_j|_{\mathcal{P}^j_{\frac{2}{10c_\sharp}}(x_j,0)}$ converge to a part of shrinking cylinder. After repeating the argument a finite number of times, we obtain convergence of a subsequence of the flows $\hat X_j|_{\mathcal{P}^j_{2\varepsilon_0^{-1}}(x_j,0)}$ to a part of a shrinking cylinder. Since the convergence is smooth on compact subsets of spacetime, this violates \eqref{eq:curvature neck contra2}.
\end{proof}

\begin{definition}
Let $X:\M\to\mathbb{S}_K^{n+1}\subset \R^{n+2}$ be an immersed hypersurface of $\mathbb{S}_K^{n+1}$. A point $p\in \M$ \emph{lies at the center of an} $(\varepsilon,k,L)$-\emph{neck of size $r$} if the map $\exp_{r^{-1}X(p)}^{-1}\circ (r^{-1}X)$ is $\varepsilon$-cylindrical and $(\varepsilon,k)$-parallel at all points in the induced intrinsic ball of radius $L$ about $p$ in the sense of \cite[Definition 3.9]{HuSi09}.
\end{definition}

By \cite[Propositions 3.4 and 3.5]{HuSi09}, these ``curvature'' necks can be integrated to obtain ``hypersurface'' necks in the tangent space, which can be replaced by a pair of ``convex caps'' in a controlled way (see \cite[Section 3]{HuSi09}).

\section{The key estimates for surgically modified flows}\label{sec:key estimates surgery}

We need to show that suitable versions of the key estimates still hold in the presence of surgeries. In the following definition, surgery is performed on the middle third of a neck of size $r$ in the obvious way: 
\begin{enumerate}[(i)]
\item First scale by $r^{-1}$ and precompose with $\exp_{r^{-1}X(p)}^{-1}$ to obtain a neck in $T_{r^{-1}X(p)}\mathbb S_{r^2 K}^{n+1}$, 
\item Perform the surgery on the middle third of this neck in $T_{r^{-1}X(p)}\mathbb S_{r^2K}^{n+1}$ as described in \cite[Section 3]{HuSi09}, 
\item Re-embed in $\mathbb S_{K}^{n+1}$ by composing with $\exp_{r^{-1}X(p)}$ and scaling by $r$.
\end{enumerate}
\begin{definition}
A \emph{surgically modified (mean curvature) flow} in $\mathbb S_K^{n+1}$ with neck parameters $(\varepsilon,k,L)$, surgery parameters $(\tau,B)$, and surgery scale $r$ is a finite sequence $\{X_i:\M_i^n\times[T_i,T_{i+1}]\to\mathbb{S}_K^{n+1}\}_{i=1}^{N-1}$ of smooth mean curvature flows $X_i:\M_i^n\times[T_i,T_{i+1}]\to\mathbb{S}_K^{n+1}$ for which the $(i+1)$-st initial datum $X_{i+1}(\cdot,T_{i+1}):\M_{i+1}\to\mathbb S_K^{n+1}$ is obtained from the $i$-th final datum $X_{i}(\cdot,T_{i+1}):\M_{i}\to\mathbb S_K^{n+1}$ by performing finitely many $(\tau,B)$-standard surgeries, in the sense of \cite[Section 3]{HuSi09}, on the middle thirds of $(\varepsilon,k,L)$-necks with mean curvature satisfying $\frac{n-1}{10r} \le H\le \frac{10(n-1)}{r}$, and then discarding finitely many connected components that are diffeomorphic either to $\mathbb S^n$ or to $\mathbb S^1\times \mathbb S^{n-1}$.
\end{definition}

\subsection{Quadratic and inscribed/exscribed curvature pinching}\label{ssec:pinching under surgery}

For a suitable range of neck and surgery parameters, and surgery scales, the surgery procedure of \cite[Section 3]{HuSi09} preserves the quadratic pinching condition \eqref{eq:strict quadratic pinching}. Indeed, the surgery replaces a nearly cylindrical Euclidean neck satisfying $|A|^2 \simeq \frac{1}{n-1}H^2$ with a pair of Euclidean convex caps satisfying $|A|^2 \simeq \frac{1}{n}H^2$ (these estimates are carried out precisely in \cite[Corollary 3.20]{Nguyen2020}). Since the surgery scale may be taken arbitrarily small, the same can be ensured after re-embedding in $\mathbb S_K^{n+1}$.

When $X_0:\M\to\mathbb S_K^{n+1}$ is an embedding, we can also preserve the inscribed curvature pinching
\[
\max_{\M\times\{0\}}\frac{\overline k}{F}\le \mu_0
\]
for any constant $\mu_0\ge \sqrt{\frac{(n-2)(n-2+\alpha)}{4\alpha}}$. Indeed, by Proposition \ref{prop:noncollapsing}, $\max_{\M\times\{0\}}\frac{\overline k}{F}$ does not decay between surgeries. Moreover, using \cite[Theorem 3.26]{HuSi09}, we can arrange, for suitable neck and surgery parameters, and surgery scales, that
\[
\frac{\overline k}{F}\le \sqrt{\frac{(n-2)(n-2+\alpha)}{4\alpha}}
\]
on the regions modified or added by surgery. A similar argument applies to the exscribed curvature. %In fact, the estimate improves since the elapsed time between any pair of surgery times is bounded uniformly from below (by Proposition \ref{prop:class C universal interior estimates}) and we have a good decay term in \eqref{eq:noncollapsing}.

\subsection{The cylindrical estimate}

We first note that the function $(f_{\sigma,\eta})_+$ is pointwise non-increasing in regions modified by surgery.

\begin{lemma}\label{lem:f_+ nonincreasing under surgery}
Given $n\ge 3$ and $K>0$, there exist parameters $\eta_0>0$, $\sigma_0\in(0,1)$, neck parameters $\varepsilon_0>0$, $k_0\geq 2$, surgery parameters $\tau$, $B$, and a surgery scale $r_0>0$ such that, for any $\sigma\in(0,\sigma_0]$ and $\eta\in(0,\eta_0]$, the function $(f_{\sigma,\eta})_+$ is
\begin{itemize}
\item zero on regions added by, and 
\item non-increasing on regions modified by
\end{itemize}
standard surgery with parameters $\tau_0$, $B$ on an $(\varepsilon,k,L)$-neck with mean curvature satisfying $H\ge \frac{(n-1)}{10r}$ for any $\varepsilon\in(0,\varepsilon_0]$, $k\geq k_0$, $L\ge 10$ and $r\in(0,r_0]$.
\end{lemma}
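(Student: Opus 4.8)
The plan is to reduce, using the scaling covariance of $f_{\sigma,\eta}$, to a model situation and then apply the precise description of standard surgery in \cite[Section 3]{HuSi09} (cf. \cite{Nguyen2020}). Under a parabolic dilation by $r>0$, the quantities $|A|^2-(\tfrac1{n-1}+\eta)H^2$ and $W=aH^2+bK$ both scale by $r^2$, so $f_{\sigma,\eta}$ scales by $r^{2\sigma}>0$; thus the sign of $f_{\sigma,\eta}$ and the relation ``non-increasing'' are scale-covariant, and the factor $\mathrm e^{2\delta Kt}$ in $(f_{\sigma,\eta})_+$ (constant and positive across one surgery) plays no role. We may therefore assume the surgery is performed after rescaling by $r$, so that the neck has mean curvature comparable to $n-1$ and lies in $T_{r^{-1}X(p)}\mathbb S^{n+1}_{r^2K}$, identified with Euclidean space; the standard surgery of \cite[Section 3]{HuSi09} is carried out there and re-embedded by $\exp_{r^{-1}X(p)}$. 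Since the surgery modifies the hypersurface only on a region of bounded rescaled extent about the surgery point, the re-embedding distorts $A$ and the metric by $O(r^2K)$ there; in particular $W=a(n-1)^2+O(r^2K)$ is bounded above and below away from $0$ on the modified and added regions, $W^{\sigma-1}>0$ is uniformly controlled, and the sign of $f_{\sigma,\eta}$ equals that of $|A|^2-(\tfrac1{n-1}+\eta)H^2$. We fix surgery parameters $\tau,B$ and neck parameters $\varepsilon_0>0$, $k_0\ge 2$ admissible for \cite[Section 3]{HuSi09}, and let $\sigma_0\in(0,1)$ be arbitrary.

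On the region added by surgery, \cite[Corollary 3.20]{Nguyen2020} gives a pair of convex caps with $|A|^2\le(\tfrac1n+C\tau)H^2+Cr^2K$, $C=C(n)$. As $\tfrac1n-\tfrac1{n-1}<0$, choosing $\eta_0$ so small that $\tfrac1n-\tfrac1{n-1}+C\tau+\eta_0<0$ and then $r_0$ small yields $|A|^2-(\tfrac1{n-1}+\eta)H^2<0$ there for all $\eta\le\eta_0$, hence $f_{\sigma,\eta}<0$ and $(f_{\sigma,\eta})_+\equiv 0$ on the added region.

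The main step is the modified region. Here I would identify the modified region with a sub-annulus of the original neck (agreeing with it, to all orders, at the outer edge) and invoke the surgery estimates of \cite[Section 3]{HuSi09}: on this annulus the new hypersurface is, up to the $O(r^2K)$ ambient correction, a rotationally symmetric profile obtained by bending the neck monotonically towards a convex cap, so that $H^{\mathrm{new}}\ge H^{\mathrm{old}}$ and $|A^{\mathrm{new}}|^2-(\tfrac1{n-1}+\eta)(H^{\mathrm{new}})^2\le |A^{\mathrm{old}}|^2-(\tfrac1{n-1}+\eta)(H^{\mathrm{old}})^2$. (The model behind this: for principal curvatures $(\mu_1,\mu,\dots,\mu)$ with $\mu$ held fixed, $|A|^2-(\tfrac1{n-1}+\eta)H^2=\mu_1\big(\tfrac{n-2}{n-1}\mu_1-\tfrac{2}{n-1}(H-\mu_1)\big)-\eta H^2$ has $\mu_1$-derivative $2\big(\tfrac{n-2}{n-1}\mu_1-\mu-\eta H\big)<0$ for $\mu_1\le\mu$, and the surgery increases $\mu_1$.) Since $W^{\sigma-1}$ is decreasing in $H$ ($\sigma<1$), it follows that if $f_{\sigma,\eta}^{\mathrm{old}}\le 0$ then $|A^{\mathrm{old}}|^2-(\tfrac1{n-1}+\eta)(H^{\mathrm{old}})^2\le 0$, hence the same holds with ``new'' and $(f_{\sigma,\eta})_+^{\mathrm{new}}=0$; while if $f_{\sigma,\eta}^{\mathrm{old}}>0$ then $f_{\sigma,\eta}^{\mathrm{new}}<f_{\sigma,\eta}^{\mathrm{old}}$. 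In either case $(f_{\sigma,\eta})_+$ is non-increasing on the modified region, completing the proof.

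I expect the modified-region step to be the main obstacle: one must verify that the construction of \cite[Section 3]{HuSi09} --- tailored to preserve bounds of the form $|A|^2\le cH^2+d$ --- also delivers the needed monotonicity for our $f_{\sigma,\eta}$, and, crucially, that the perturbation introduced by matching to the genuinely $\varepsilon$-cylindrical (rather than exactly cylindrical) neck does not destroy it. This last point is handled not by reserving a gap (which would degenerate as $\eta\to 0$, precisely why the statement is ``non-increasing'' rather than ``$\equiv 0$'' on the modified region) but by the fact that the surgery is built to coincide with the neck exactly at the outer edge and to be monotone throughout, so that no gap is required. Given the estimates already recorded in \cite[Section 3]{HuSi09} and \cite[Corollary 3.20]{Nguyen2020}, this reduces to careful bookkeeping.
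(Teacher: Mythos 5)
Your argument is correct and is essentially the paper's: the paper disposes of this lemma in one line by citing \cite[Proposition 4.5]{HuSi09} (which contains exactly the quantitative statements you re-derive --- $H$ non-decreasing and $|A|^2-(\tfrac{1}{n-1}+\eta)H^2$ non-increasing on the modified region, with the strictly negative bending term dominating the $\varepsilon$-neck and matching errors, and the convex-cap bound on the added region) together with the Gauss equation, which accounts for the $O(r^2K)$ correction from re-embedding via the exponential map. Your additional observation that $W^{\sigma-1}$ is non-increasing in $H$ for $\sigma<1$ is the only extra ingredient needed to pass from the pinching quantity to $f_{\sigma,\eta}$ itself, and you handle it correctly.
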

\begin{proof}
This follows readily from \cite[Proposition 4.5]{HuSi09} and the Gauss equation.
\end{proof}

In the following theorem, we assume that the parameters $\eta$, $\sigma$, the neck parameters $\varepsilon$, $k$, $L$, the surgery parameters $\tau$, $B$, and the surgery scale $r$ are chosen within the range for which Lemma \ref{lem:f_+ nonincreasing under surgery} applies. 

%\textcolor{red}{\texttt{This will simplify the statements of the following estimates}}.

\begin{comment}
\begin{theorem}[Cylindrical estimate for surgically modified flows (Cf. {\cite[Theorem 5.3]{HuSi09}})]\label{thm:cylindrical surgery}
For all $n\ge 3$ and $K>0$, there exist $\eta_0>0$, neck parameters $\varepsilon_0>0$ and $k_0\ge 2$, and surgery parameters $\tau_0$, $B$ with the following property. Let $\{X_i:\M_i^n\times[T_i,T_{i+1}]\to\mathbb{S}_K^{n+1}\}_{i=1}^{N-1}$ be a surgically modified flow with initial condition in the class $\mathcal{C}_K^n(\alpha,V,\Theta)$ (with $\alpha>\frac{2}{3}$ when $n=3$) and $(\tau_0,B)$-standard surgeries on $(\varepsilon,k)$-hypersurface necks with $\varepsilon<\varepsilon_0$ and $k\ge k_0$. For every $\eta\in(0,\eta_0)$ there exists $C_\eta=C_\eta(n,\alpha,V,\Theta,\eta)<\infty$ such that
\begin{align}\label{eq:cylindrical estimate surgery}
|A|^2-\frac{1}{n-1}H^2 \leq \eta H^2 + C_\eta K \quad\text{in}\quad \mc M_i^n\times[T_i,T_{i+1}]
\end{align}
for all $i$.
\end{theorem}
\end{comment}
\begin{theorem}[Cylindrical estimate for surgically modified flows (Cf. {\cite[Theorem 5.3]{HuSi09}})]\label{thm:cylindrical surgery}
Let $\{X_i:\M_i^n\times[T_i,T_{i+1}]\to\mathbb{S}_K^{n+1}\}_{i=1}^{N-1}$, $n\ge 3$, be a surgically modified flow with initial condition in the class $\mathcal{C}_K^n(\alpha,V,\Theta)$ (with $\alpha>\frac{2}{3}$ when $n=3$). For every $\eta\in(0,\eta_0)$ there exists $C_\eta=C_\eta(n,\alpha,V,\Theta,\eta)<\infty$ such that
\begin{align}\label{eq:cylindrical estimate surgery}
|A|^2-\frac{1}{n-1}H^2 \leq \eta H^2 + C_\eta K \quad\text{in}\quad \mc M_i\times[T_i,T_{i+1}]
\end{align}
for all $i$.
\end{theorem}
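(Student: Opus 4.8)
The plan is to re-run the proof of Theorem~\ref{thm:cylindrical estimate} essentially verbatim; the only genuinely new ingredient is that every integral quantity occurring in the Stampacchia iteration is monotone non-increasing across the surgery times, so that the energy inequality \eqref{eq:still holds with surgeries} — which is the only point where surgery enters — survives with constants independent of the number of surgeries. By the scaling covariance of the estimate we may again assume $K=1$. We first fix $\eta$, $\sigma$, the neck parameters $\varepsilon,k,L$, the surgery parameters $\tau,B$ and the surgery scale $r$ within the range for which Lemma~\ref{lem:f_+ nonincreasing under surgery} applies; shrinking $\eta_0$ and the constant $\ell=\ell(n,\alpha,\eta)$ from the smooth proof if necessary, we may also assume that the choices $p^{-1}=\ell$, $\sigma=\ell^{3/2}$ ultimately forced by the iteration satisfy $\sigma\le\sigma_0$ and $\eta\le\eta_0$.

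On each smooth piece $\M_i\times[T_i,T_{i+1}]$ the evolution inequality \eqref{eq:ineqn_evol} holds, so the computation \eqref{eq:basicesimate}, together with the Poincar\'e inequality (Proposition~\ref{prop:Poincare}) exactly as in the smooth proof, yields
\[
\frac{d}{dt}\log\Big(\int f_+^p\,d\mu\Big)\le -\delta p\qquad\text{on }(T_i,T_{i+1})
\]
for $p\ge\ell^{-1}$ and $\sigma\le\ell p^{-1/2}$. At a surgery time $T_{i+1}$, Lemma~\ref{lem:f_+ nonincreasing under surgery} shows that $f_+$ vanishes on the regions added by surgery and is pointwise non-increasing on the regions modified by surgery, the remainder of the hypersurface being unchanged; since, in addition, standard surgery does not increase area in the modified region and the discarded components contribute non-negatively, $\int f_+^p\,d\mu$ does not increase across $T_{i+1}$. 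Stitching the smooth estimates together with these favourable jumps gives the uniform bound $\int f_+^p\,d\mu\le\mathrm{e}^{-\delta p t}\int f_+^p(\cdot,0)\,d\mu_0\le C(n,\alpha,V,\Theta,\eta)$ for all $t$, where $C$ is independent of the number of surgeries and of the final time because $\int f_+^p(\cdot,0)\,d\mu_0$ is controlled by conditions (1)--(3) defining $\mathcal C_K^n(\alpha,V,\Theta)$.

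For the Stampacchia iteration one defines $v_k$, $V_k(t)$, $u(k)$ and $U(k)$ exactly as in the proof of Theorem~\ref{thm:cylindrical estimate}, all spatial integrals being taken over the relevant time-slice $\M_i$. The differential inequality \eqref{eq:evolvevk}--\eqref{eq:v_k monotonicity} holds on each smooth piece, while, since $v_k$ is a non-decreasing function of $f_+$, area does not increase under surgery, and discarding components only decreases $\int v_k^2\,d\mu$, the quantity $\int v_k^2\,d\mu$ is non-increasing across each surgery time as well. Integrating over $[0,T)$ and dropping the non-positive surgery contributions, the energy estimate \eqref{eq:still holds with surgeries} holds unchanged. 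From this point the interpolation step \eqref{eq:uest1}, the H\"older/$L^p$ step \eqref{eq:uest2}, the iteration inequalities \eqref{eq:iteration1}--\eqref{eq:iteration2}, and Stampacchia's lemma go through word for word, yielding $\mathrm{e}^{2\delta t}f_{\sigma,\eta}\le C(n,\alpha,V,\Theta,\eta)$; Young's inequality and undoing the scaling then give \eqref{eq:cylindrical estimate surgery} (in fact with the additional decay factor $\mathrm{e}^{-2\delta Kt}$, which we simply discard).

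The only real content beyond the smooth case is therefore the monotonicity of $\int f_+^p\,d\mu$ and $\int v_k^2\,d\mu$ across the surgery times, which I expect to be the main (though minor) obstacle: it requires combining Lemma~\ref{lem:f_+ nonincreasing under surgery} with the standard area-non-increase and component-discarding properties of the Huisken--Sinestrari surgery, and checking that the parameter ranges supplied by Lemma~\ref{lem:f_+ nonincreasing under surgery} are compatible with the values $p^{-1}=\ell$, $\sigma=\ell^{3/2}$ and $\eta<\eta_0$ imposed by the iteration. Everything else is a transcription of the proof of Theorem~\ref{thm:cylindrical estimate}.
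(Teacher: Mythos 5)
Your proposal is correct and follows essentially the same route as the paper: on each smooth interval one re-derives the energy inequality of Theorem \ref{thm:cylindrical estimate}, and Lemma \ref{lem:f_+ nonincreasing under surgery} (together with the non-increase of the area element under standard surgery and the discarding of components) lets one integrate across the surgery times, after which the Stampacchia iteration proceeds unchanged. The only cosmetic difference is that the paper simply sets $\delta=0$ and forgoes the exponential decay factor, whereas you retain it and discard it at the end; both yield \eqref{eq:cylindrical estimate surgery}.
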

\begin{proof}
Proceeding as in the proof of Theorem \ref{thm:cylindrical estimate} but with $\delta$ taken to be zero, we obtain an analogue of \eqref{eq:v_k monotonicity} on each time interval $(T_i,T_{i+1})$, with $v_k$ replaced by $(f_{\sigma,\eta}-k)_+^{\frac{p}{2}}$. By Lemma \ref{lem:f_+ nonincreasing under surgery}, this can be integrated from $T_1=0$ to $T_N=T$ to obtain an analogue of \eqref{eq:still holds with surgeries}. The remainder of the proof of the cylindrical estimate then applies unmodified.
\end{proof}

%In fact, the constant in the estimate improves since the elapsed time between any pair of surgery times is bounded uniformly from below (by Proposition \ref{prop:class C universal interior estimates}) and we have a good decay term in \eqref{eq:cylindrical estimate}.

Henceforth, when we refer to a surgically modified flow, we will assume that the neck and surgery parameters, and the surgery scale, are fixed within a suitable range, which we progressively refine.

\subsection{The gradient estimate} 

\begin{comment}
\begin{theorem}[Gradient estimate for surgically modified flows (Cf. {\cite[Theorem 6.1]{HuSi09}})]\label{thm:gradient surgery}
For all $n\ge 3$, $K>0$ and $\Lambda>0$, there exist $\eta_0>0$, neck parameters $\varepsilon_0>0$ and $k_0\ge 2$, and surgery parameters $(\tau_0,B)$ with the following property. Let $\{X_i:\M_i^n\times[T_i,T_{i+1}]\to\mathbb{S}_K^{n+1}\}_{i=1}^{N-1}$ be a surgically modified flow with initial condition in the class $\mathcal{C}_K^n(\alpha,V,\Theta)$ (with $\alpha>\frac{2}{3}$ when $n=3$) and $(\tau_0,B)$-standard surgeries on $(\varepsilon,k)$-hypersurface necks with $\varepsilon<\varepsilon_0$ and $k\ge k_0$. There exists $C=C(n,\alpha,V,\Theta)<\infty$ such that
\begin{align}\label{eq:gradient estimate surgery}
\vert\cd A\vert^2\le C(H^4+K^2) \quad\text{in}\quad \mc M_i^n\times[T_i,T_{i+1}]
\end{align}
for all $i$.
\end{theorem}
\end{comment}

Since the derivatives of the second fundamental form are zero on round Euclidean cylinders and spherical caps, the derivative estimates also pass to surgically modified flows.
 
\begin{theorem}[Gradient estimate for surgically modified flows (Cf. {\cite[Theorem 6.1]{HuSi09}})]\label{thm:gradient surgery}
Let $\{X_i:\M_i^n\times[T_i,T_{i+1}]\to\mathbb{S}_K^{n+1}\}_{i=1}^{N-1}$, $n\ge 3$, be a surgically modified flow with initial condition in the class $\mathcal{C}_K^n(\alpha,V,\Theta)$ (with $\alpha>\frac{2}{3}$ when $n=3$). There exists $C=C(n,\alpha,V,\Theta)<\infty$ such that
\begin{align}\label{eq:gradient estimate surgery}
\vert\cd A\vert^2\le C(H^4+K^2) \quad\text{in}\quad \mc M_i^n\times[T_i,T_{i+1}]
\end{align}
for all $i$.
\end{theorem}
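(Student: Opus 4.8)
The plan is to mirror the proof of the smooth gradient estimate (Theorem \ref{thm:gradient estimate}), whose entire argument is a maximum-principle analysis of the quantity $\vert\cd A\vert^2/(G_\eta G_0)$, and to check that the surgically modified flow respects every ingredient used there. First I would recall that on each smooth piece $X_i:\M_i^n\times[T_i,T_{i+1}]\to\mathbb{S}_K^{n+1}$ the proof of Theorem \ref{thm:gradient estimate} applies verbatim: the evolution inequality \eqref{eqn_evolderiv} for $\vert\cd A\vert^2$, the cylindrical estimate (now available for surgically modified flows via Theorem \ref{thm:cylindrical surgery}, which supplies the auxiliary supersolutions $G_\eta\ge C_\eta K>0$ and $G_0\ge C_0K>0$), the Kato inequality \eqref{ineqn_Kato}, and the pinching bound \eqref{eq:uniform quadratic pinching} (preserved under surgery by \S\ref{ssec:pinching under surgery}) together force the bound $\vert\cd A\vert^2/(G_\eta G_0)\le C(n,\alpha)$ at any interior parabolic maximum, and the Proposition \ref{prop:class C universal interior estimates} interior estimates control the initial slice $\M_i\times\{T_i\}$ when no interior maximum is attained. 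This yields $\vert\cd A\vert^2\le C(H^4+K^2)$ on each piece, with $C=C(n,\alpha,V,\Theta)$ \emph{provided} the bound is already known to hold at time $T_i$ with the same constant.

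So the only new point is the passage of the estimate across surgery times. Here I would invoke the mechanism already flagged in the statement preceding the theorem: a standard surgery replaces a region of an $(\varepsilon,k,L)$-neck by portions of a round Euclidean cylinder and a pair of spherical caps, re-embedded in $\mathbb{S}_K^{n+1}$ via $\exp_{r^{-1}X(p)}$ and rescaled by $r$. On the \emph{added} regions (the caps), $\cd A\equiv 0$ in the Euclidean model up to errors controlled by the surgery parameters $(\tau,B)$ and the neck quality $(\varepsilon,k)$; after re-embedding, $\vert\cd A\vert$ is bounded in terms of $\varepsilon$, the intrinsic curvature $K$, and the surgery scale $r$ (the curvature of the cylinder/caps being comparable to $H\sim r^{-1}$), and by choosing the neck and surgery parameters, and $r$, within the admissible range one makes $\vert\cd A\vert^2\le C(H^4+K^2)$ hold there with room to spare. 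On the \emph{modified} regions (the parts of the old neck that are smoothly interpolated but not discarded), the surgery is performed only on the middle third, so outside a fixed fraction the hypersurface is untouched, and on the interpolation region one uses the explicit $C^{\lfloor\varepsilon^{-1}\rfloor}$-closeness of the neck to a standard model (\cite[Section 3]{HuSi09}, cf.\ Lemma \ref{lem:f_+ nonincreasing under surgery}) to conclude the same bound. Discarded components are irrelevant. Hence the bound \eqref{eq:gradient estimate surgery} holds on $\M_{i+1}\times\{T_{i+1}\}$ with a constant depending only on $n$, $\alpha$, $V$, $\Theta$, and we may iterate.

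The structural obstacle — which is the reason this is stated as a separate theorem rather than a remark — is the \textbf{uniformity of the constant $C$ in the number of surgeries $N$ and in the maximal time $T$}. Two features make this work. First, as in Theorem \ref{thm:gradient estimate}, the constant produced by the interior maximum-principle argument depends only on $n$ and $\alpha$ through the pinching bound, and the ``boundary'' contribution from Proposition \ref{prop:class C universal interior estimates} depends only on $\Lambda_0=\Lambda_0(n,\alpha,\Theta)$ — crucially, \emph{not} on a lower bound for the time of existence, since the coercive $-2\delta KG_\eta$ term yields the exponential decay that renders the zeroth-order contribution harmless (and in the surgery version one simply takes $\delta=0$, retaining $G_\eta,G_0\ge cK>0$, exactly as in the proof of Theorem \ref{thm:cylindrical surgery}). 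Second, the post-surgery bound on added and modified regions is a \emph{fixed} inequality, with the same constant each time, once the neck/surgery parameters and surgery scale are fixed within the admissible range. Thus the estimate propagates inductively across all $N$ surgery times with a single constant $C=C(n,\alpha,V,\Theta)$, and the restriction $t\ge\lambda_0K^{-1}$ of the smooth theorem is not needed because the cruder form $\vert\cd A\vert^2\le C(H^4+K^2)$ is already implied on $[T_i,T_i+\lambda_0K^{-1}]$ by the Bernstein-type interior estimates of Proposition \ref{prop:class C universal interior estimates} applied to the restarted flow. I would close by noting that the argument, being local and maximum-principle based, needs no modification for the $n=3$ case beyond the standing assumption $\alpha>\tfrac23$.
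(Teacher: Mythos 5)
Your proposal is correct and follows essentially the same route as the paper: run the maximum-principle argument of Theorem \ref{thm:gradient estimate} with $\delta=0$ (and a fixed $\eta=\beta$) on each smooth interval, and control the starting data at each surgery time by the closeness of the modified and added regions to the standard Euclidean cylinder and caps. The only point the paper makes more explicitly is that the quantity propagated across surgery times is the ratio $\vert\cd A\vert^2/(G_\eta G_0)$ itself, which is bounded on surgery regions because there $G_\beta\ge\tfrac{\beta}{2}H^2$, $G_0\ge\tfrac{3\beta}{2}H^2$ and $\vert\cd A\vert^2\le\mu_0H^4$ — your write-up leaves this lower bound on $G_\eta G_0$ in terms of $H^4$ slightly implicit, and your closing remark that Proposition \ref{prop:class C universal interior estimates} yields the bound on all of $[T_i,T_i+\lambda_0K^{-1}]$ overstates what the Bernstein estimates give (they degenerate as $t\to T_i$).
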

\begin{proof}
We proceed as in the proof of Theorem \ref{thm:gradient estimate}, but with $\delta$ taken to be zero and fixed $\eta=\beta$, where $\beta$ is defined by \eqref{eq:beta in grad est}. First observe that, since $|A|^2-\frac{1}{n-1}H^2\equiv 0$ on a round cylinder in Euclidean space, we may choose a suitable range of neck and surgery parameters, and surgery scales, so that
\begin{align*}
|A|^2-\frac{1}{n-1} H^2 \leq \frac{\beta}{2}H^2
\end{align*}
on regions modified or added by surgery. We may therefore arrange that
\begin{align*}
G_\beta:={}& \left(\frac{1}{n-1}+\beta\right) H^2-|A|^2 + 2C_\beta K\geq \frac{\beta}{2} H^2 
\end{align*}
and 
\begin{align*}
G_0:={}&\frac{3}{n+2}H^2 -|A|^2 + 2C_0 K\geq \frac{3\beta}{2}H^2.
\end{align*}
Furthermore, since $|\nabla A|^2 \equiv 0$ on a round cylinder in Euclidean space, we may choose a suitable range of neck and surgery parameters, and surgery scales, so that, on regions modified or added by surgery, $|\nabla A|^2 \leq \mu_0 H^4$, where $\mu_0$ is a constant which depends only on $n$. Thus, in regions modified or added by surgery,
\begin{align*}
\frac{|\nabla A|^2}{G_0G_\beta} \leq \frac{4\mu_0}{3\beta^2}\,.
\end{align*} 
Since the surgically modified flow remains in a fixed surgery class, we may proceed as in the proof of Theorem \ref{thm:gradient estimate} in the time intervals $(T_i,T_{i+1})$. 
\begin{comment}
We can iterate the argument between each surgery time and get 
\begin{align}
\frac{|\nabla A|^2}{G_0G_\eta} \leq \max \left \{ \mu _0 \times \frac{2}{\eta} \times \frac{1} { \left ( \frac{3}{n+2} - \eta - \frac{1}{n-1} \right )}, \frac{ 3 N}{ 2 \kappa_n (n+2)}, m_0 \right \}. 
\end{align}
We can choose $\eta = \kappa_n$ in our definition of $G_\eta$ and therefore we see that $C_\eta = C(n,\alpha,V,\Theta)$ and hence
\begin{align*}
G_0G_\eta \leq H^4 + CK^2\,.
\end{align*}
The estimate above then becomes
\begin{align*}
|\nabla A|^2 \leq{}& C(n) |A|^4 + C(n,\alpha,V,\Theta)K^{2}.\qedhere
\end{align*}
\end{comment}
\end{proof}

\subsection{Higher order estimates}

Proceeding similarly as  in Theorem \ref{thm:gradient surgery} (cf. \cite[Theorem 6.3]{HuSi09}) yields estimates for higher derivatives of $A$ along surgically modified flows.

\begin{theorem}[Hessian estimate for surgically modified flows (cf. {\cite[Theorem 6.3]{HuSi09}})]\label{thm:Hessian estimate surgery}
Let $\{X_i:\M_i^n\times[T_i,T_{i+1}]\to\mathbb{S}_K^{n+1}\}_{i=1}^{N-1}$, $n\ge 3$, be a surgically modified flow with initial condition in the class $\mathcal{C}_K^n(\alpha,V,\Theta)$ (with $\alpha>\frac{2}{3}$ when $n=3$). There exists $C=C(n,\alpha,V,\Theta)$ such that
\ba\label{eq:scale_invariant_Hessian_estimate}
\vert\cd^2{A}\vert^2\leq C(H^{6}+K^{3})\;\;\text{in}\;\; \M \times[\lambda_0K^{-1},T)\,.
\ea
\end{theorem}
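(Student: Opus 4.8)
The plan is to mimic the proof of Theorem~\ref{thm:Hessian estimate} (the smooth case), observing at each step that the quantities being estimated vanish on the model surgery pieces, so that the maximum principle argument survives the insertion of surgeries. First I would recall that, by Theorem~\ref{thm:gradient surgery}, the gradient estimate $\vert\cd A\vert^2\le C(H^4+K^2)$ holds on every piece $\M_i\times[T_i,T_{i+1}]$ with a uniform constant $C=C(n,\alpha,V,\Theta)$; this is the crucial input that controls the first-order terms $\vert\cd A\vert^4$ appearing in the evolution inequality \eqref{eqn_evolderiv2} for $\vert\cd^2 A\vert^2$. Then, exactly as in the proof of Theorem~\ref{thm:Hessian estimate}, I would form the auxiliary function
\[
f:= \frac{\vert\cd^2{A}\vert^2}{W^{\frac{5}{2}}}+c_1\frac{\vert\cd{A}\vert^2}{W^{\frac{3}{2}}} +C_1K^2\frac{\vert\cd{A}\vert^2}{W^{\frac{7}{2}}}\,,
\]
where $W=aH^2+bK$ and $c_1$, $C_1$ are the constants from that proof (which depend only on $n$, $\alpha$, $V$, $\Theta$ through the gradient estimate and the pinching class). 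The same computation shows that, on each smooth piece,
\[
(\pd_t-\Delta)\Big(f-c_4H+\tfrac{C_4}{2\delta}K^{\frac12}\Big)\le 0
\]
(now taking $\delta$ to be the fixed constant $\delta(n,\alpha)$ from Theorem~\ref{thm:gradient estimate}, or replacing the decaying term by a constant $K^{1/2}$ term as in the surgery versions above), so that $\max_{\M_i\times\{t\}}(f-c_4H)$ is non-increasing in $t$ on each $(T_i,T_{i+1})$.

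The second step is to check that $f-c_4H$ does not jump up at a surgery time. On a region \emph{added} by surgery (a spherical cap modeled on a Euclidean cap up to small error), $\vert\cd A\vert$ and $\vert\cd^2 A\vert$ are $O(\varepsilon)$ relative to the curvature scale — indeed they vanish identically on the exact Euclidean model — while $H\ge \tfrac{n-1}{10r}$ is large, so $f$ is small there; in particular $f-c_4H$ on the new piece is bounded above by the value it had on the old piece in the corresponding (undisturbed) region, provided the neck and surgery parameters are chosen fine enough and the surgery scale $r$ small enough. On a region \emph{modified} by surgery, the standard surgery is a graphical $C^\infty$-small (at the relevant scale) perturbation supported on the middle third of an $(\varepsilon,k,L)$-neck, so the derivatives $\vert\cd^j A\vert$, $j=1,2$, remain $O(\varepsilon)$ in scale-invariant terms and $f$ again stays small. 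Combining: there is a choice of $\eta_0,\sigma_0,\varepsilon_0,k_0,\tau,B,r_0$ (the same progressively-refined range used in Lemma~\ref{lem:f_+ nonincreasing under surgery} and Theorem~\ref{thm:gradient surgery}) such that $\sup_{\M_{i+1}\times\{T_{i+1}\}}(f-c_4H)\le \sup_{\M_i\times\{T_{i+1}\}}(f-c_4H)$. Since discarding components only decreases the supremum, we conclude by induction on $i$ that
\[
\max_{\M_i\times\{t\}}(f-c_4H)\le \max_{\M_1\times\{\lambda_0K^{-1}\}}(f-c_4H)+C_4' K^{1/2}\le C_5 K^{1/2}
\]
for all $i$ and all $t\ge \lambda_0K^{-1}$, where the initial bound comes from Proposition~\ref{prop:class C universal interior estimates}. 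Unwinding the definition of $f$ and using $W\le C(n,\alpha)(H^2+K)$, $W\ge K$, together with Young's inequality, yields $\vert\cd^2 A\vert^2\le cHW^{5/2}+CK^{1/2}W^{5/2}\le C(H^6+K^3)$, as claimed.

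The main obstacle, and the only genuinely new point beyond Theorem~\ref{thm:Hessian estimate}, is the surgery-jump step: one must verify that the particular combination $f-c_4H$ (not merely $\vert\cd^2 A\vert^2/W^{5/2}$ alone, but also the lower-order terms $\vert\cd A\vert^2/W^{3/2}$ and $K^2\vert\cd A\vert^2/W^{7/2}$) is controlled on the glued region in terms of its value on the pre-surgery hypersurface, \emph{uniformly} in the number of surgeries. This is exactly the phenomenon already exploited for $(f_{\sigma,\eta})_+$ in Lemma~\ref{lem:f_+ nonincreasing under surgery} and for the gradient quantity in Theorem~\ref{thm:gradient surgery}: the model surgery pieces are (round Euclidean) cylinders and caps, on which all covariant derivatives of $A$ vanish, so choosing the neck quality $\varepsilon$ small and surgery scale $r$ small makes the scale-invariant derivative quantities as small as we like on the modified and added regions. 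A minor additional check is that the constants $c_1,C_1,c_4,C_4$ depend only on $n,\alpha,V,\Theta$ — which holds because they are built only from the Kato inequality, the pinching inequality \eqref{eq:uniform quadratic pinching} (preserved by surgery, \S\ref{ssec:pinching under surgery}), and the gradient estimate of Theorem~\ref{thm:gradient surgery} — so that the final constant $C$ in \eqref{eq:scale_invariant_Hessian_estimate} is independent of the number of surgeries and of $T$. One should also note that, as in Theorem~\ref{thm:Hessian estimate}, the argument as stated needs $n\ge 4$ only insofar as Theorem~\ref{thm:Hessian estimate} does; here it is stated for $n\ge 3$ in line with the other surgery estimates, the point being that on each smooth piece the computation of Theorem~\ref{thm:Hessian estimate} applies verbatim and the surgery modifications are dimension-independent.
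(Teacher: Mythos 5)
Your proposal is correct and takes essentially the same route as the paper's (very brief) proof: run the smooth argument of Theorem \ref{thm:Hessian estimate} between surgeries, using the surgery versions of the cylindrical and gradient estimates (Theorems \ref{thm:cylindrical surgery} and \ref{thm:gradient surgery}), and control the jumps at surgery times via the smallness, for suitable neck and surgery parameters and surgery scales, of the scale-invariant quantities $\vert\cd A\vert^2/H^4$ and $\vert\cd^2 A\vert^2/H^6$ on regions modified or added by surgery. The only difference is that you make explicit the non-increase of the particular auxiliary function $f-c_4H$ across surgeries, which the paper leaves implicit.
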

\begin{proof}
Proceed as in Theorem \ref{thm:Hessian estimate} between surgeries and use the fact that, for suitable neck and surgery parameters, and surgery scales, $\vert \cd^2 A\vert^2/H^6$ is small in regions modified or added by surgery.
\end{proof}

\begin{comment}
\begin{theorem}[Higher-order estimates]\label{thm:higher order estimates surgery}
For all $n\ge 3$, $K>0$ and $\Lambda>0$, there exist $\eta_0>0$, neck parameters $\varepsilon_0>0$ and $k_0\ge 2$, and surgery parameters $(\tau_0,B)$ with the following property. Let $\{X_i:\M_i^n\times[T_i,T_{i+1}]\to\mathbb{S}_K^{n+1}\}_{i=1}^{N-1}$ be a surgically modified flow with initial condition in the class $\mathcal{C}_K^n(\alpha,V,\Theta)$ (with $\alpha>\frac{2}{3}$ when $n=3$) and $(\tau_0,B)$-standard surgeries on $(\varepsilon,k)$-hypersurface necks with $\varepsilon<\varepsilon_0$ and $k\ge k_0$. There exist, for each pair of non-negative integers $k$ and $\ell$, constants $C_{k,\ell}=C_{k,\ell}(k,\ell,n,\alpha,V,\Theta)$ such that
\ba\label{eq:scale_invariant_higher_derivative_estimates}
\vert\cd_t^k\cd^\ell {A}\vert^2\leq C_{k,\ell}(H^{2+4k+2\ell}+K^{1+2k+\ell})\;\;\text{in}\;\; \M \times[\lambda_0K^{-1}\!,T).
\ea
\end{theorem}
\end{comment}

Analogues of the higher order estimates \eqref{eq:scale_invariant_higher_derivative_estimates} also pass to surgically modified flows, but, as mentioned above, they will not actually be needed in the construction.

\begin{comment}
\begin{theorem}[Higher-order estimates]\label{thm:higher order estimates surgery}
Let $\{X_i:\M_i^n\times[T_i,T_{i+1}]\to\mathbb{S}_K^{n+1}\}_{i=1}^{N-1}$, $n\ge 3$, be a surgically modified flow with initial condition in the class $\mathcal{C}_K^n(\alpha,V,\Theta)$ (with $\alpha>\frac{2}{3}$ when $n=3$). There exist, for each pair of non-negative integers $k$ and $\ell$, constants $C_{k,\ell}=C_{k,\ell}(k,\ell,n,\alpha,V,\Theta)$ such that
\ba\label{eq:scale_invariant_higher_derivative_estimates}
\vert\cd_t^k\cd^\ell {A}\vert^2\leq C_{k,\ell}(H^{2+4k+2\ell}+K^{1+2k+\ell})\;\;\text{in}\;\; \M \times[\lambda_0K^{-1}\!,T).
\ea
\end{theorem}
\end{comment}

\subsection{Neck detection}
 
%Given the cylindrical estimate and the gradient estimates, we can prove the Neck Detection Lemma. Essentially the theorem tells us that if one principal curvature is small relative the mean curvature and the curvature is sufficiently large and the parabolic neighbourhood is free of surgeries, then the point $(p_0,t_0)$ is the centre of a shrinking curvature neck where we will perform surgery.

The conclusion of the neck detection lemma \ref{lem:curvature neck detection} also holds for surgically modified flows, so long as we work in regions which are not affected by surgeries (cf. \cite[Lemma 7.4]{HuSi09}). %We need the following versions.

In the following theorem, a region $U\times I$ is \emph{free of surgeries} if at each surgery time $T_i\in I$, $i\in \{2,.\dots,N-1\}$, we have $U\subset \M_{i-1}\cap\M_i$ and $X_{i-1}|_U(\cdot,T_{i})=X_{i}|_U(\cdot,T_{i})$ (and hence $X_{i-1}$ and $X_i$ may be pasted together to form a smooth mean curvature flow in $U\times I$).

\begin{theorem}[{Neck detection for surgically modified flows (cf. \cite[Lemma 7.4]{HuSi09})}] \label{lem_NDL} 
Let $\{X_i:\M_i^n\times[T_i,T_{i+1}]\to\mathbb{S}_K^{n+1}\}_{i=1}^{N-1}$, $n\geq 3$, be a surgically modified flow with initial condition in the class $\mathcal C^n_K(\alpha,V,\Theta)$. Given $\varepsilon$, $\theta$, $L$ and $ k$, there exist positive $\eta_\sharp$, $h_\sharp$ with the following property: If
\begin{enumerate}
\item[(ND1)] $|H(p_0,t_0)| \geq h_\sharp \sqrt K$ and  $\frac{ \lambda_1(p_0,t_0)}{|H(p_0,t_0)|}\leq \eta_\sharp$, and
\item[(ND2)] the neighbourhood $\mathcal P\left(p_0,t_0,\frac{(n-1)(L+1)}{H(p_0,t_0)},\frac{\theta}{H^2(p_0,t_0)}\right)$ is free of surgeries,
\end{enumerate} 
then $(p_0,t_0)$ lies at the centre of an $(\varepsilon,k,L)$-neck.
%\begin{itemize}
%\item[(A)] the parabolic neighbourhood $\mathcal P (p_0,t_0,L,\theta)$ is an $ (\varepsilon,k_0-1,L,\theta)$ shrinking curvature neck, 
%\item[(B)] the parabolic neighbourhood $\mathcal P(p_0,t_0,L-1,\theta/2)$ is an $(\varepsilon,k,L-1, \theta/2)$ shrinking hypersurface neck.
%\end{itemize}
\end{theorem}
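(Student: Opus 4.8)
The plan is to run the standard contradiction/compactness argument of Huisken--Sinestrari, upgraded to surgically modified flows using the fact that the key estimates of Section \ref{sec:key estimates surgery} pass to such flows with the same constants. Suppose the conclusion fails. Then there exist parameters $\varepsilon$, $\theta$, $L$, $k$ and a sequence of surgically modified flows $\{X_i^{(j)}\}$ with initial condition in $\mathcal C^n_{K_j}(\alpha,V,\Theta)$, together with spacetime points $(p_j,t_j)$ satisfying (ND1) with $h_\sharp\to\infty$ and $\eta_\sharp\to 0$ (i.e.\ $|H_j(p_j,t_j)|\ge j\sqrt{K_j}$ and $\lambda_1^j/|H_j|(p_j,t_j)\le j^{-1}$) and (ND2) with the fixed $\theta$, $L$, but for which $(p_j,t_j)$ does \emph{not} lie at the centre of an $(\varepsilon,k,L)$-neck. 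After translating $(X_j(p_j,t_j),t_j)$ to the spacetime origin of $\R^{n+2}\times\R$, rotating so that the tangent plane to the sphere is $\R^{n+1}\times\{0\}$ with upward normal, and parabolically rescaling by $r_j:=\tfrac{n-1}{|H_j(p_j,t_j)|}$, we obtain a sequence of surgically modified flows $\hat X_j$ in $\mathbb S^{n+1}_{r_j^2K_j}-r_j^{-1}K_j^{-1/2}e_{n+2}$ of class $\mathcal C^n_{r_j^2K_j}(\alpha,V,\Theta)$ with $\hat X_j(p_j,0)=0$, $\hat H_j(p_j,0)=n-1$, $\hat\lambda_1^j/\hat H_j(p_j,0)\le j^{-1}$, while the rescaled ambient curvature $r_j^2K_j\to 0$.

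Next I would establish uniform curvature bounds on a fixed-size parabolic neighbourhood of $(p_j,0)$. The gradient estimate (Theorem \ref{thm:gradient surgery}) and the Hessian estimate (Theorem \ref{thm:Hessian estimate surgery}), which hold along surgically modified flows with constants depending only on $n,\alpha,V,\Theta$, together with the fact that $r_j^2K_j\to 0$, yield $\vert\hat\cd_j\hat H_j\vert\le c_\sharp \hat H_j^2$ and $\vert\pd_t\hat H_j\vert\le \tfrac{c_\sharp^2}{2}\hat H_j^3$ on $\M_j\times[-\rho,0]$ for any fixed $\rho$ and all $j$ large (here we also use (ND2): the parabolic neighbourhood in question is free of surgeries, so Lemma \ref{lem:integrate the gradient estimate (parabolic)} applies there verbatim after pasting the flows together). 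Hence $\tfrac{n-1}{10}\le \hat H_j\le 10(n-1)$ on a parabolic cylinder of fixed radius about $(p_j,0)$, and by interior estimates (Proposition \ref{prop:class C universal interior estimates}) all derivatives of the rescaled immersions are bounded there. A subsequence converges locally smoothly to a limiting mean curvature flow $\hat X_\infty$ in $\R^{n+1}\times\{0\}$, which (since $r_j^2 K_j\to 0$) lives in flat space. By the cylindrical estimate for surgically modified flows (Theorem \ref{thm:cylindrical surgery}), $\vert\hat A_\infty\vert^2\le\tfrac{1}{n-1}\hat H_\infty^2$, so $\hat\lambda_1^\infty\ge 0$; since $\hat\lambda_1^\infty$ vanishes at the origin, the strong maximum principle (splitting theorem) forces $\hat X_\infty$ to split off a line locally, and the equality case of the cylindrical pinching forces the cross-section to be umbilic, i.e.\ a round shrinking cylinder. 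Now, as in Lemma \ref{lem:curvature neck detection}, a posteriori the mean curvature on this cylinder is controlled, so we can bootstrap the gradient estimate on successively larger parabolic neighbourhoods — each step still free of surgeries by (ND2) once $L$ is accounted for — to extend the convergence to a parabolic neighbourhood of radius $\gtrsim \varepsilon^{-1}(L+1)$. The limit is a part of a round shrinking cylinder on a region as large as we wish.

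Finally, smooth convergence on this fixed large parabolic neighbourhood to a piece of a round shrinking cylinder means that for $j$ large the rescaled flow $\hat X_j$ is, at $(p_j,0)$ and over an intrinsic ball of radius $L$, $\varepsilon$-close in $C^{k}$ to the standard cylinder — that is, $\exp^{-1}_{r_j^{-1}X_j(p_j)}\circ (r_j^{-1}X_j)$ is $\varepsilon$-cylindrical and $(\varepsilon,k)$-parallel at every point of the intrinsic $L$-ball about $p_j$, using that the rescaled sphere is nearly Euclidean and the Gauss equation to transfer the Euclidean neck estimates (cf.\ \cite[Propositions 3.4, 3.5]{HuSi09}). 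This says precisely that $(p_j,t_j)$ lies at the centre of an $(\varepsilon,k,L)$-neck for all large $j$, contradicting our assumption. Hence suitable $\eta_\sharp=\eta_\sharp(n,\alpha,V,\Theta,\varepsilon,\theta,L,k)>0$ and $h_\sharp=h_\sharp(n,\alpha,V,\Theta,\varepsilon,\theta,L,k)<\infty$ exist.

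The main obstacle is bookkeeping the interaction with surgeries: one must ensure that (ND2) guarantees the entire (growing) parabolic neighbourhood used in each bootstrapping step is free of surgeries, so that the pasted-together flow is genuinely smooth there and Lemma \ref{lem:integrate the gradient estimate (parabolic)} and the local smooth compactness apply; keeping track of how the required size of the surgery-free neighbourhood grows with $L$ and $\varepsilon^{-1}$ is the delicate point, but it is handled exactly as in \cite[Lemma 7.4]{HuSi09}. A secondary point is checking that the constants $c_\sharp$ in the gradient/Hessian estimates are genuinely independent of the maximal time and of the number of surgeries — this is exactly the content of Theorems \ref{thm:gradient surgery} and \ref{thm:Hessian estimate surgery} — and that the vanishing of $r_j^2K_j$ legitimately reduces the limit problem to the Euclidean setting of \cite{HuSi09}.
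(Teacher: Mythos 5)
Your proposal is correct and follows essentially the same route as the paper: the paper's proof of this theorem simply invokes the contradiction/compactness argument of Lemma \ref{lem:curvature neck detection}, with the surgery versions of the cylindrical, gradient and Hessian estimates (Theorems \ref{thm:cylindrical surgery}, \ref{thm:gradient surgery}, \ref{thm:Hessian estimate surgery}) substituted for the smooth ones, and with (ND2) guaranteeing that the relevant parabolic neighbourhood is surgery-free so the rescaling and compactness argument goes through verbatim. You have merely written out in full the argument the paper cites by reference, including the correct observation that $r_j^2K_j\to 0$ reduces the blow-up limit to the Euclidean shrinking cylinder.
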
 
\begin{proof}
The proof of Lemma \ref{lem:curvature neck detection} applies using Theorems \ref{thm:cylindrical surgery}, \ref{thm:gradient surgery} and  \ref{thm:Hessian estimate surgery} in lieu of Theorems \ref{thm:cylindrical estimate}, \ref{thm:gradient estimate} and \ref{thm:Hessian estimate}, due to the hypothesis (ND2).
\end{proof}

\section{Existence of terminating surgically modified flows}

We say that a surgically modified flow $\{X_i:\M_i^n\times[T_i,T_{i+1}]\to\mathbb{S}_K^{n+1}\}_{i=1}^{N-1}$ \emph{terminates} at the final time $T:=T_N<\infty$ if either
\begin{itemize}
%\item each connected component of $X_{N-1}(\M_{N-1},T_N)$ converges under mean curvature flow in infinite time to a totally geodesic hypersphere, or 
\item each connected component of $X_{N-1}(\M_{N-1},T_N)$ is diffeomorphic to $\mathbb S^n$ or to $\mathbb S^1\times\mathbb S^{n-1}$, or
\item after performing surgery on $X_{N-1}(\M_{N-1},T_N)$, each connected component of the resulting hypersurface is diffeomorphic to $\mathbb S^n$ or to $\mathbb S^1\times\mathbb S^{n-1}$.
\end{itemize}

\begin{theorem}[Existence of terminating surgically modified flows]
Let $X:\M\to \mathbb S_K^{n+1}$, $n\ge 3$, be a properly immersed hypersurface satisfying the quadratic pinching condition \eqref{eq:strict quadratic pinching}. There exists a surgically modified flow $\{X_i:\M_i^n\times[T_i,T_{i+1}]\to\mathbb{S}_K^{n+1}\}_{i=1}^{N-1}$ with $X_1(\cdot,0)=X$ which terminates at time $T=T_N$.
\end{theorem}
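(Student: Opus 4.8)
The plan is to follow the Huisken--Sinestrari existence scheme (cf. \cite[Sections 7--8]{HuSi09}, and also \cite[Section 4]{BrHu17}), adapted to the present setting by means of the estimates already established in \S\ref{sec:key estimates surgery}. First I would fix the parameters: choose $\alpha$, $V$, $\Theta$ so that $X$ lies in $\mathcal{C}_K^n(\alpha,V,\Theta)$ (with $\alpha>\tfrac23$ when $n=3$), then fix the neck parameters $(\varepsilon,k,L)$, surgery parameters $(\tau,B)$ and surgery scale $r$ within the common range for which Lemma \ref{lem:f_+ nonincreasing under surgery}, Theorems \ref{thm:cylindrical surgery}, \ref{thm:gradient surgery}, \ref{thm:Hessian estimate surgery}, the neck detection theorem \ref{lem_NDL}, and the surgery procedure of \cite[Section 3]{HuSi09} all apply; this also fixes, via Corollary \ref{cor:spacetime grad H bound}, the constants $c_\sharp$, $h_\sharp$. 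One then fixes curvature thresholds $\Theta_1<\Theta_2<\Theta_3$ (depending only on $n,\alpha,V,\Theta$) so that: (i) the final datum of each smooth piece can be continued as a smooth flow until $\max H^2$ first reaches $\Theta_3 K$; (ii) at that time, every point with $H^2\ge\Theta_1 K$ either lies on an $(\varepsilon,k,L)$-neck (by neck detection together with a convexity dichotomy exploiting the cylindrical estimate: if $\lambda_1/H$ is not small then by Theorem \ref{thm:cylindrical estimate} the piece is locally convex) or lies in a region that is, after finitely many surgeries, a union of $\mathbb S^n$'s; (iii) after performing standard surgeries on the middle thirds of these necks and discarding components diffeomorphic to $\mathbb S^n$ or $\mathbb S^1\times\mathbb S^{n-1}$, the resulting hypersurface lies in $\mathcal{C}_K^n(\alpha,V,\Theta_2)$, with $\Theta_2<\Theta_3$. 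This is the content of the ``neck continuation'' argument of \cite[Theorem 8.2]{HuSi09}, and by \S\ref{ssec:pinching under surgery} the quadratic pinching (and, if $X$ is embedded, the inscribed/exscribed pinching) is preserved.

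Granting this, I would construct the surgically modified flow inductively: run the smooth flow from the current datum; by Proposition \ref{prop:class C universal interior estimates} it exists at least until $H^2=\Theta_3 K$, and if $\max H^2$ never reaches $\Theta_3K$ then the flow exists for all time and, by the cylindrical estimate and Remark \ref{rem:cylindrical apps} (the Huisken--Sinestrari ancient-solutions theorem \cite[Theorem 6.1]{HuSi15}), converges to a hyperequator, hence is eventually graphical over a hyperequator and the component is diffeomorphic to $\mathbb S^n$ --- so the flow already terminates. Otherwise, at the first time $H^2=\Theta_3K$, perform the surgeries and discards as above to get a new datum in $\mathcal{C}_K^n(\alpha,V,\Theta_2)\subset\mathcal{C}_K^n(\alpha,V,\Theta_3)$, and repeat. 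Because every surgery is performed at curvature scale comparable to $\Theta_3 K$ and replaces a neck by caps with a definite drop in enclosed volume (cf. \cite[Lemma 7.22 and the proof of Theorem 7.25]{HuSi09}), while condition (2) of the surgery class gives a uniform bound $\mu_t(\M_i)\le VK^{-n/2}$ (area is nonincreasing under the flow and strictly drops by a fixed amount at each surgery), only finitely many surgeries can occur; moreover finitely many components are created. Hence after finitely many steps no component has $\max H^2$ reaching $\Theta_3 K$, so as above every remaining component is (after a final surgery and discard) diffeomorphic to $\mathbb S^n$ or $\mathbb S^1\times\mathbb S^{n-1}$, i.e. the flow terminates.

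The properness/noncompactness of $\M$ requires a small additional remark: a priori $\M$ could be noncompact, but the quadratic pinching \eqref{eq:strict quadratic pinching} together with \eqref{eqn_conv}-type identities forces a definite bound on the principal curvatures in terms of $H$, and since $\M$ is properly immersed in the compact $\mathbb S_K^{n+1}$ with $\mu_0(\M)\le VK^{-n/2}$, one checks using the monotonicity formula (or the clearing-out lemma) that $\M$ is in fact compact; alternatively one restricts to compact exhausting pieces and takes a limit. Thus condition (2) of $\mathcal{C}_K^n(\alpha,V,\Theta)$ is not vacuous.

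The main obstacle is step (ii)--(iii) above: the neck continuation argument, i.e. showing that at the stopping time every high-curvature region is covered by surgically admissible necks ending in convex caps, so that surgery produces a hypersurface still in the surgery class with strictly smaller volume. This is exactly \cite[Theorem 8.2]{HuSi09}, and in our setting the only new input needed over \cite{HuSi09} is that the roles of ``mean convexity + convexity estimate'' are played instead by the cylindrical estimate (Theorems \ref{thm:cylindrical estimate}, \ref{thm:cylindrical surgery}): a point of high curvature either has $\lambda_1/H$ small --- and then neck detection (Theorem \ref{lem_NDL}, valid in surgery-free neighbourhoods, with surgery-adjacent regions handled by the explicit post-surgery geometry as in \cite[Lemma 7.12]{HuSi09}) produces an $(\varepsilon,k,L)$-neck --- or $\lambda_1/H\ge\eta_\sharp$, and then by the cylindrical estimate and \eqref{eqn_conv} the piece is locally uniformly convex on a controlled neighbourhood, so by the Myers/diameter argument of \cite[Section 7]{HuSi09} its component is diffeomorphic to $\mathbb S^n$ and may be discarded. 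Since these modifications have already been carried out in detail in \cite[Section 8]{BrHu17} in the two-convex Riemannian setting (which is formally parallel, replacing two-convexity by the cylindrical estimate), the argument goes through essentially verbatim, which is why we only sketch it here.
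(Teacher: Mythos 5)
Your proposal is correct and follows essentially the same route as the paper: defer the neck-continuation machinery to \cite[Section 8]{HuSi09} and \cite[Section 8]{BrHu17}, and handle termination by observing that the finite initial area, the monotone decay of area under the flow, and the definite area drop per surgery force finitely many surgeries, after which the remaining components flow smoothly forever with bounded curvature and the (exponentially decaying) cylindrical estimate forces convergence to totally geodesic hyperspheres. Two minor remarks: for the long-time components the relevant input is forward subsequential convergence to a \emph{minimal} limit, which the cylindrical estimate then forces to be totally geodesic --- the ancient-solution rigidity theorem of \cite{HuSi15} that you cite is not the tool needed here --- and properness into the compact ambient $\mathbb S^{n+1}_K$ already makes $\M$ compact, so the monotonicity-formula detour is unnecessary.
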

\begin{proof}
Given the cylindrical and gradient estimates, and the neck detection lemma, and a sufficiently small choice of the surgery scale $r$, we can proceed as in \cite[Section 8]{HuSi09} using the machinery developed in \cite[Sections 3 and 7]{HuSi09}, with only minor modifications required. These are:

1. In order to reconcile our data $\mathcal{C}_K^n(\alpha,V,\Theta)$ with those of \cite{HuSi09}, we replace the parameter $K$ by introducing the scale factor $R:=1/\sqrt{\Theta K}$. Our data $\alpha$ and $V$ can then be related to their $\alpha_0$ and $\alpha_2$, respectively. The constant $\alpha_1$ which appears in \cite{HuSi09} is not needed here. Since the surgery scale may be taken as small as needed, we may then choose the surgery parameters (albeit with slightly worse values) as explained in \cite[pp. 208--209]{HuSi09}.

2. Since our ambient space is non-Euclidean, the proof of the neck continuation theorem requires modification in two places. These are explained and carried out in detail in a more general setting in \cite[Section 8]{BrHu17}.

3. Since the maximal time is not a priori bounded in the present setting, the surgery algorithm may not terminate ``on its own''. Observe, however, that the maximum of the mean curvature must eventually drop permanently below the scale which triggers the surgery; indeed, if this were not the case, then an infinite number of surgeries would be carried out, an impossibility since we began with a finite amount of area, area is non-increasing under the flow, and each surgery decreases area by at least a certain fixed amount. The flow can then be smoothly continued indefinitely. Since the curvature remains uniformly bounded, standard arguments imply that each connected component converges, along some sequence of times approaching infinity, to a minimal hypersurface. The cylindrical estimate (Theorem \ref{thm:cylindrical estimate}) applied independently to each connected component then implies that each component of the limit is a totally geodesic hypersphere. So the flow must terminate afterall.
\begin{comment}
\noindent \textcolor{red}{\texttt{1. Need to be slightly careful about the parameters. First, replace the scale factor $K$ in the estimates by defining $R:=1/\sqrt{\Theta K}$. Substitute Corollay 6.5 (what about 7.2 and 7.19?) with our Lemma \ref{lem:integrate the gradient estimate (parabolic)}.\\
2. Need to modify the neck continuation as in \cite{BrHu17}.\\
3. Long time behaviour is only slightly different: if the flow doesn't terminate ``on its own'', then we must reach an initial datum on which all components have uniformly bounded mean curvature under mean curvature flow (else we perform further surgeries using up more area). By the cylindrical estimate and the Bernstein estimates, all components converge smoothly to ``big'' $S^n$'s.}}
\end{comment}
\end{proof}

\bibliographystyle{plain}
\bibliography{../bibliography}

\end{document}